\newcommand{\excise}[1]{}
\newtheorem{thm}{Theorem}[section]
\newtheorem{lemma}[thm]{Lemma}
\newtheorem{cor}[thm]{Corollary}
\newtheorem{prop}[thm]{Proposition}
\newtheorem{prob}[thm]{Problem}
\theoremstyle{definition}
\newtheorem{example}[thm]{Example}
\newtheorem{remark}[thm]{Remark}
\newtheorem{defn}[thm]{Definition}
\newtheorem{conv}[thm]{Convention}
\numberwithin{equation}{section}
\newcommand{\ring}[1]{\ensuremath{\mathbb{#1}}}
\renewcommand\>{\rangle}
\newcommand\<{\langle}
\newcommand\NN{\ring{N}}
\newcommand\QQ{\ring{Q}}
\newcommand\ZZ{\ring{Z}}
\newcommand\kk{\Bbbk}
\newcommand\xx{{\mathbf x}}
\newcommand\yy{{\mathbf y}}
\newcommand\bb{{\mathbf b}}
\newcommand\cc{{\mathbf c}}
\renewcommand\aa{{\mathbf a}}
\newcommand\iso{\cong}
\DeclareMathOperator\spann{span} 
\DeclareMathOperator\lcm{lcm} 
\begin{document}

\mbox{}
\title{On factorization invariants and Hilbert functions\qquad}
\author{Christopher O'Neill}
\address{Mathematics\\University of California, Davis\\One Shields Ave\\Davis, CA 95616}
\email{coneill@math.ucdavis.edu}

\date{\today}

\begin{abstract}
\hspace{-2.05032pt}
Nonunique factorization in cancellative commutative semigroups is often studied using combinatorial factorization invariants, which assign to each semigroup element a quantity determined by the factorization structure.  For numerical semigroups (additive subsemigroups of the natural numbers), several factorization invariants are known to admit predictable behavior for sufficiently large semigroup elements.  In particular, the catenary degree and delta set invariants are both eventually periodic, and the omega-primality invariant is eventually quasilinear.  In this paper, we demonstrate how each of these invariants is determined by Hilbert functions of graded modules.  In doing so, we extend each of the aforementioned eventual behavior results to finitely generated semigroups, and provide a new framework through which to study factorization structures in this setting.  
\end{abstract}
\maketitle



\section{Introduction}\label{s:intro}

A \emph{factorization} of an element $\alpha$ of a cancellative commutative semigroup $(\Gamma,+)$ is an expression of $\alpha$ as a sum of irreducible elements of $\Gamma$, and a \emph{factorization invariant} is a quantity assigned to each element of $\Gamma$ (or to $\Gamma$ as a whole) that measures the failure of its factorizations to be unique.  Factorization invariants are often combinatorial in nature, and provide concrete methods of quantifying the abundance and variety of factorizations.  For instance, one may consider the number of distinct factorizations of an element $\alpha \in \Gamma$, or the maximum number of irreducible elements appearing in a single factorization of $\alpha$.  See \cite{nonuniq} for a thorough introduction.  


Several recent results examine the asymptotic behavior of factorization invariants in the setting of numerical semigroups (additive, cofinite subsemigroups of $\NN$).  For example, the delta set (Definition~\ref{d:deltaset}) and catenary degree (Definition~\ref{d:catenary}) invariants, which measure the ``spread'' of a given element's nonunique factorizations, are each eventually periodic over any numerical semigroup \cite{catenaryperiodic,deltaperiodic}.  Additionally, the $\omega$-primality invariant (Definition~\ref{d:omega}), which assigns a positive integer to each semigroup element measuring how far from prime that element is, coincides with a linear function with periodic coeffients for sufficiently large elements in any numerical semigroup \cite{omegaquasi}.  See the survey~\cite{numericalsurvey} and the references therein for more detail on this setting.  

The primary goals of this paper are to (i) generalize each result in the previous paragraph to the setting of finitely generated semigroups using techniques from combinatorial commutative algebra, and in doing so, (ii) provide a new framework through which to study these invariants.  Given a finitely generated, reduced, cancellative commutative semigroup $\Gamma$, we construct, for each factorization invariant discussed above, a family of multigraded modules whose Hilbert functions (Definition~\ref{d:hilbert}) determine the value of the invariant in question for any element of $\Gamma$.  Applying Hilbert's theorem (Theorems~\ref{t:hilbert} and~\ref{t:hilbert2}) to each family of modules classifies the eventual behavior of the corresponding factorization invariant in $\Gamma$ (Theorems~\ref{t:deltahilbert}, \ref{t:omegahilbert}, and~\ref{t:catenaryhilbert}).  In the special case where $\Gamma \subset \NN$ is a numerical semigroup, each classification specializes to a result from the previous paragraph (Corollaries~\ref{c:deltanumerical}, \ref{c:omeganumerical} and~\ref{c:catenarynumerical}).  

In contrast to the semigroup-theoretic arguments originally used for numerical semigroups, the arguments presented here lie squarely in the realm of combinatorial commutative algebra.  As such, our approach provides new theoretical tools with which to study factorization invariants in this setting.  This includes several classes of semigroups of interest in factorization theory, such as Cohen-Kaplansky domains (integral domains with finitely many irreducible elements), which are of interest in algebraic number theory~\cite{ckdomains,mottdivtheory}, and block monoids, which are central to additive combinatorics~\cite{block}.  In~fact, questions arising in algebraic number theory motivated the initial study the $\omega$-primality invariant~\cite{origin,tame}.  Additionally, in the setting of affine semigroups (finitely generated subsemigroups of~$\NN^d$), several invariants discussed in this paper are of interest outside of factorization theory.  Indeed, factorizations of an affine semigroup element coincide with integer solutions to a system of linear Diophantine equations.  From this viewpoint, delta sets of affine semigroup elements are closely related to questions of lattice width \cite{computelatticewidth,latticewidthplanarsets}, and catenary degree computations encapsulate data related to $\ell_1$-distances between integer solutions~\cite{combopt}.  

The paper is organized as follows.  In Section~\ref{s:hilbert}, we review Hilbert's theorem, both for $\NN$-graded modules (Theorem~\ref{t:hilbert}) and multigraded modules (Theorem~\ref{t:hilbert2}).  We also review multivariate quasipolynomial functions, including several equivalent definitions (Theorem~\ref{t:affineequiv}).  
The remaining sections of the paper consider different factorization invariants for finitely generated semigroups, including the number of distinct factorizations (Section~\ref{s:factorizations}), the delta set (Section~\ref{s:delta}), $\omega$-primality (Section~\ref{s:omega}), and the catenary degree (Section~\ref{s:catenary}).  
We demonstrate how the value of each invariant can be recovered from Hilbert functions, and examine consequences both for finitely generated semigroups and for numerical semigroups.  


\section{Hilbert functions of multigraded modules}\label{s:hilbert}

In this section, we survey the definitions and results from combinatorial commutative algebra that will be used throughout this paper.  See \cite{cca} for a thorough introduction.  

\begin{conv}\label{con:names}
Throughout this paper, we denote by $\kk$ an arbitrary field, $T$ a finite Abelian group, $d \ge 1$ a positive integer, and $A = \NN^d \oplus T$.  Additionally, given $a = (a_1, \ldots, a_k) \in \NN^k$, we write $\xx^\aa$ for the monomial $x_1^{a_1} \cdots x_k^{a_k}$ in the polynomial ring $\kk[x_1, \ldots, x_k]$.  Lastly, let $\NN = \{0, 1, 2, \ldots\}$.  
\end{conv}

\begin{defn}\label{d:hilbert}
Fix a $\kk$-algebra $S$.  An \emph{$A$-grading of $S$} is an expression 
$$S \iso \bigoplus_{\alpha \in A} S_\alpha$$
of $S$ as a direct sum of finite dimensional $\kk$-subspaces of $S$, indexed by $A$, such that $S_\alpha S_\beta \subset S_{\alpha + \beta}$ for all $\alpha, \beta \in A$.  
An \emph{$A$-grading of a module $M$ over $S$} is an expression 
$$M \iso \bigoplus_{\alpha \in A} M_\alpha$$
of $M$ as a direct sum of $\kk$-subspaces of $M$, indexed by $A$, with $S_\alpha M_\beta \subset M_{\alpha + \beta}$ for all $\alpha, \beta \in A$.  Such a grading is \emph{modest} if $\dim_\kk M_\alpha < \infty$ for all $\alpha \in A$.  The \emph{Hilbert function of a modestly $A$-graded $S$-module $M$} is the function $\mathcal H(M;-):A \to \ZZ_{\ge 0}$ given by
$$\mathcal H(M;\alpha) = \dim_\kk M_\alpha$$
for each $\alpha \in A$.  
\end{defn}

Theorem~\ref{t:hilbert}, whose original form is due to Hilbert, characterizes the eventual behavior of the Hilbert functions of certain $\NN$-graded modules.  

\begin{defn}\label{d:quasipolynomial}
A function $f:\NN \to \QQ$ is a \emph{quasipolynomial of degree $k$} if there exist periodic functions $a_0, \ldots, a_k:\NN \to \QQ$ such that
$$f(n) = a_k(n)n^k + \cdots + a_1(n)n + a_0(n)$$
and $a_k$ is not identically zero.  The \emph{period of $f$} is the minimal positive integer $\pi$ such that $a_i(n + \pi) = a_i(n)$ for all $i \le k$ and $n \in \NN$.  
\end{defn}

\begin{thm}[Hilbert]\label{t:hilbert}
Fix an $\NN$-graded $\kk$-algebra $S$, and a finitely generated, graded $S$-module $M$ of dimension $d$.  For $n \gg 0$, the Hilbert function of $M$ coincides with a quasipolynomial of degree $d-1$ (called the \emph{Hilbert quasipolynomial of $M$}).  More specifically, there exist periodic functions $a_0, \ldots, a_{d-1}:\NN \to \QQ$ such that $a_{d-1} \not\equiv 0$ and
$$\mathcal H(M;n) = a_{d-1}(n)n^{d-1} + \cdots + a_1(n)n + a_0(n)$$
for sufficiently large $n$.  Additionally, if $y_1, \ldots, y_d \in S$ is a homogeneous system of parameters for $M$, then the period of each $a_i$ divides $\lcm(\deg(y_1), \ldots, \deg(y_d))$.  
\end{thm}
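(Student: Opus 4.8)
The plan is to prove Theorem~\ref{t:hilbert} by reduction to the case of a polynomial ring, using a homogeneous system of parameters $y_1, \ldots, y_d$ to pass from $M$ to a module of the same Hilbert function over $R = \kk[y_1, \ldots, y_d]$, and then analyzing that situation directly. The first step is to observe that, since $y_1, \ldots, y_d$ is a h.s.o.p.\ for $M$, the module $M$ is a finitely generated module over the subalgebra $R = \kk[y_1, \ldots, y_d] \subset S$, and moreover $R$ is itself isomorphic to a polynomial ring (the $y_i$ are algebraically independent because $\dim M = d$ and they form a system of parameters). Here $R$ carries the $\NN$-grading in which $\deg y_i$ is whatever its degree was in $S$; crucially the inclusion $R \into S$ is a graded map, so the Hilbert function of $M$ computed over $R$ equals the Hilbert function computed over $S$. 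Thus it suffices to prove the statement for a finitely generated graded module $M$ over a polynomial ring $R = \kk[y_1, \ldots, y_d]$ with $\deg y_i = e_i$, and to show the period of each coefficient divides $L := \lcm(e_1, \ldots, e_d)$.

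The second step handles the polynomial ring case by induction on $d$. For $d = 0$, $R = \kk$ and $M$ is finite-dimensional, so $\mathcal H(M;n) = 0$ for $n \gg 0$, a quasipolynomial of degree $-1$ (the zero quasipolynomial), and there is nothing to check about periods. For the inductive step, one takes a minimal graded free resolution of $M$ over $R$, or more simply argues via a filtration: every finitely generated graded $R$-module $M$ has a finite filtration $0 = M_0 \subset M_1 \subset \cdots \subset M_r = M$ with each quotient $M_j/M_{j-1} \iso (R/\pp_j)(-s_j)$ for homogeneous primes $\pp_j$ and shifts $s_j \in \ZZ$; since Hilbert functions are additive on short exact sequences, $\mathcal H(M;n) = \sum_j \mathcal H(R/\pp_j; n - s_j)$, so it is enough to treat cyclic modules $R/\pp$. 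A shift by $s_j$ does not change the period of the coefficient functions (it only permutes the values of each periodic $a_i$), so it does no harm. For a homogeneous prime $\pp$, one picks a variable $y_i$ not in $\pp$ (possible unless $\pp = \mm$, the irrelevant ideal, in which case $R/\pp = \kk$ and we are in the base case); multiplication by $y_i$ gives a short exact sequence $0 \to (R/\pp)(-e_i) \xrightarrow{y_i} R/\pp \to R/(\pp + (y_i)) \to 0$, and $R/(\pp + (y_i))$ is a graded module over the smaller polynomial ring $R/(y_i)$ in $d-1$ variables, so the inductive hypothesis applies to it.

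The third step extracts the quasipolynomial and its period from the recursion $\mathcal H(R/\pp; n) - \mathcal H(R/\pp; n - e_i) = g(n)$, where $g$ is, by induction, eventually a quasipolynomial of degree $\le d - 2$ whose coefficients have period dividing $\lcm$ of the degrees of the $d-1$ remaining variables, hence dividing $L$. One then needs the elementary fact that if $h:\NN \to \QQ$ satisfies $h(n) - h(n-e) = g(n)$ for $n \gg 0$ with $g$ eventually a quasipolynomial of degree $k$ and period $\pi \mid L$ (and $e \mid L$), then $h$ is eventually a quasipolynomial of degree $k+1$ whose coefficients have period dividing $L$: restricting to each residue class $n \equiv c \pmod{e}$ turns the difference equation into an ordinary first-difference equation $h(n) - h(n-e) = g(n)$ on an arithmetic progression, whose solution is obtained by summation and is a polynomial-plus-periodic expression of degree one higher; keeping track of all residue classes modulo $e$ and modulo $\pi$ simultaneously — i.e.\ working modulo $\lcm(e,\pi) \mid L$ — shows every coefficient is periodic with period dividing $L$. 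This ``summation raises degree by one and the period of a telescoped sum divides $L$'' lemma is the computational heart of the argument, and the main obstacle is being careful that the degree jumps by exactly one (so the top coefficient $a_{d-1}$ is not identically zero — this uses that $M$ genuinely has dimension $d$, equivalently that in the filtration at least one prime $\pp_j$ satisfies $\dim R/\pp_j = d$, forcing $\pp_j = 0$ and contributing a top-degree term that is not cancelled by the others) and that the bookkeeping of periods through the induction never exceeds $L$. Everything else — additivity of Hilbert functions, existence of the prime filtration, algebraic independence of a system of parameters, invariance of period under degree shift — is standard and can be cited from \cite{cca}.
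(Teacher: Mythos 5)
The paper states Theorem~\ref{t:hilbert} as Hilbert's classical theorem and cites it without proof, so there is no internal argument to compare against; I will just assess your proposal. Your argument is the standard textbook proof (as in Bruns--Herzog or Stanley): descend to the polynomial subring $R = \kk[y_1, \ldots, y_d]$ generated by the h.s.o.p., invoke a prime filtration and additivity of Hilbert functions to reduce to cyclic modules $R/\pp$, pick a variable $y_i \notin \pp$, and induct on the number of variables via the short exact sequence $0 \to (R/\pp)(-e_i) \to R/\pp \to R/(\pp + (y_i)) \to 0$. The reasoning is correct in substance.

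Two points deserve tightening. First, the statement you actually need to carry through the induction is the slightly stronger one: \emph{for any finitely generated graded module $N$ over $\kk[y_1, \ldots, y_m]$ with $\deg y_j = e_j$, the Hilbert function is eventually quasipolynomial of degree $\dim N - 1$ with coefficient periods dividing $\lcm(e_1, \ldots, e_m)$} --- with no h.s.o.p.\ hypothesis. This matters because $R/(\pp + (y_i))$ may have dimension strictly less than $m-1$, so the remaining variables need not be an h.s.o.p.\ for it and the theorem as literally stated would not apply. You do implicitly use this stronger form (your inductive hypothesis reads off the period from the ambient variable degrees, not from an h.s.o.p.), so this is a matter of stating the induction hypothesis precisely rather than a genuine flaw. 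Second, the ``summation raises degree by exactly one'' lemma needs a brief justification of the non-vanishing of the top coefficient: on a fixed residue class modulo $e_i$, the leading coefficient of $g$ could vanish identically, and then the telescoped sum gains no degree on that class. What rescues the claim is that the leading coefficient of a Hilbert quasipolynomial is a non-negative periodic function, not identically zero, so it is strictly positive on some residue class modulo $\lcm(e_i, \pi)$, and that class supplies $a_{d-1} \not\equiv 0$. (Alternatively, both points evaporate if you replace the telescoping bookkeeping by the Hilbert-series computation: over $R$ the series is $P(t)/\prod_i (1 - t^{e_i})$ with $P(1) \neq 0$ since the pole order at $t=1$ is $\dim M = d$, and partial fractions at roots of unity dividing $\lcm(e_i)$ give the degree and the period bound at once.)
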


The following result, due to Bruns and Ichim \cite{quasicoeffs}, yields more control over the periods of the coefficients of the Hilbert quasipolynomial in Theorem~\ref{t:hilbert}.  

\begin{thm}[{\cite[Theorem~2]{quasicoeffs}}]\label{t:constcoeffs}
Fix an $\NN$-graded $\kk$-algebra $S$, and an $\NN$-graded $S$-module $M$ of dimension $d$.  Fix $a_0, \ldots, a_{d-1}:\NN \to \QQ$ periodic such that
$$f(n) = a_{d-1}(n)n^{d-1} + \cdots + a_1(n)n + a_0(n)$$
is the Hilbert quasipolynomial of $M$, and suppose $f$ has period $\pi$.  The coefficient $a_i$ is constant for all $i \ge \dim M/IM$, where $I = \<x \in R : \gcd(\pi,\deg(x)) = 1\>$.  
\end{thm}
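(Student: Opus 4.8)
The plan is to deduce the statement from the rational-function form of the Hilbert series of $M$, with a carefully chosen denominator. For any homogeneous system of parameters $\theta_1, \ldots, \theta_d$ for $M$ with $e_i = \deg\theta_i$, the module $M$ is finitely generated over the polynomial subalgebra $\kk[\theta_1, \ldots, \theta_d]$, so
$$\sum_{n \ge 0} \mathcal H(M;n)\, t^n = \frac{Q(t)}{(1-t^{e_1})\cdots(1-t^{e_d})}$$
for some $Q(t) \in \ZZ[t]$, and every pole of this rational function is a root of unity. Via partial fractions, a pole of order $m$ at a primitive $r$-th root of unity $\zeta$ contributes to $\mathcal H(M;n)$ a term of the form $(\text{polynomial in } n \text{ of degree } m-1)\cdot\zeta^{-n}$, and summing over the primitive $r$-th roots of unity produces, in the quasipolynomial $f(n) = a_{d-1}(n)n^{d-1} + \cdots + a_0(n)$, periodic contributions (of least period $r$) to the coefficients $a_0, \ldots, a_{m-1}$; meanwhile the pole at $t = 1$, which has order exactly $d$ since $\dim M = d$, contributes only a genuine polynomial of degree $d-1$, i.e.\ constants. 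Thus $a_i$ fails to be constant only if there is a pole of order $\ge i+1$ at a root of unity different from $1$, and it suffices to bound the order of every such pole by $c := \dim M/IM$.

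The crux is therefore to build a homogeneous system of parameters $\theta_1, \ldots, \theta_d$ for $M$ in which $d-c$ of the degrees $e_i$ are coprime to $\pi$. To do this, take homogeneous generators $x_1, \ldots, x_s$ of $I$, so each $\delta_j := \deg x_j$ is coprime to $\pi$; set $e := \lcm(\delta_1, \ldots, \delta_s)$ and $m_j := e/\delta_j$, and form the ideal $J := \langle x_1^{m_1}, \ldots, x_s^{m_s}\rangle$. Then $e$ is still coprime to $\pi$, the ideal $J$ is generated in the single degree $e$, and $\sqrt{J} = \sqrt{I}$, so $\dim M/JM = \dim M/IM = c$. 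After replacing $\kk$ by an infinite extension field (which affects neither $\mathcal H(M;-)$, nor $I$, nor any Krull dimension), a sequence of $d-c$ sufficiently general elements $\theta_1, \ldots, \theta_{d-c}$ of the degree-$e$ component $J_e$ is a partial system of parameters for $M$ (this uses only that $\dim M/JM = c$, so that $J_e$ avoids the top-dimensional minimal primes of each successive quotient), and any homogeneous system of parameters for $M/(\theta_1, \ldots, \theta_{d-c})M$ extends these to the desired system, with $e_1 = \cdots = e_{d-c} = e$. I expect this construction — especially the passage from $I$ to the equigenerated ideal $J$ with the same radical, and the verification that generic elements of $J_e$ cut dimension as needed — to be the main technical point; the rest is bookkeeping with standard facts about Hilbert series and partial fractions.

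With such a system of parameters fixed, the pole bound is immediate. Suppose $r > 1$ and $r \mid \pi$; since $\gcd(e,\pi) = 1$ we have $r \nmid e$, so $r$ divides none of $e_1, \ldots, e_{d-c}$, whence the pole of $Q(t)/\prod_i(1-t^{e_i})$ at a primitive $r$-th root of unity has order at most $\#\{\,i : r \mid e_i\,\} \le \#\{\,i : d-c < i \le d\,\} = c$. Because $f$ has period exactly $\pi$, the only roots of unity $\ne 1$ at which the Hilbert series has a pole are primitive $r$-th roots with $r \mid \pi$, and each such pole has order $\le c$; hence for every $i \ge c$ the coefficient $a_i$ receives only the constant contribution coming from the pole at $t = 1$, and is therefore constant. (When $c = d$, or $M = 0$, there is nothing to prove.) This gives $a_i$ constant for all $i \ge \dim M/IM$, as claimed.
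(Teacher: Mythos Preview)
The paper does not prove this theorem at all: it is quoted as \cite[Theorem~2]{quasicoeffs} (Bruns--Ichim) and used as a black box, so there is no ``paper's own proof'' to compare against. Your argument is essentially the Bruns--Ichim proof, and it is sound. The two ingredients --- the partial-fraction analysis of the Hilbert series, and the construction of a homogeneous system of parameters in which $d-c$ of the parameters have degree coprime to $\pi$ --- are exactly the right ones, and your reduction to an equigenerated ideal $J$ with $\sqrt{J}=\sqrt{I}$ is the standard device for making the prime-avoidance step work in a single degree.

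A couple of minor points worth tightening. First, the iterative step ``generic $\theta_i\in J_e$ drops the dimension of $M/(\theta_1,\dots,\theta_{i-1})M$ by one'' deserves one more sentence: since $\theta_1,\dots,\theta_{i-1}\in J$, the quotient $M/(JM+\theta_1 M+\cdots+\theta_{i-1}M)$ equals $M/JM$, so its dimension is still $c$, whence $J$ (and hence $J_e$, over an infinite field) avoids every minimal prime of maximal dimension of the current quotient as long as that dimension exceeds $c$. Second, the claim that every pole of the Hilbert series away from $1$ lies at a primitive $r$-th root of unity with $r\mid\pi$ follows from uniqueness of the partial-fraction decomposition (so no cancellation can hide a larger period); you use this implicitly and it is correct, but it is the one place where ``$f$ has period exactly $\pi$'' enters. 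With these remarks your proof is complete, though for the purposes of this paper a citation suffices.
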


We conclude this section with Theorem~\ref{t:hilbert2}, a generalization of Hilbert's theorem to modest $A$-gradings.  First, we define multivariate quasipolynomials on $A$ (Definition~\ref{d:eventualquasipolynomial}) and give several equivalent definitions in Theorem~\ref{t:affineequiv}.  

\begin{remark}\label{r:affine}
Fields \cite{multiquasi} gives a thorough and detailed introduction to multivariate quasipolynomials in the case $A = \NN^d$, including proofs ``from scratch'' of some portions of Theorem~\ref{t:affineequiv}.  Most of the proofs immediately generalize to our setting (where $A$ may have torsion), so in what follows we give only the most relevant definitions and results.  The interested reader is encouraged to consult \cite{multiquasi}.  Lemma~\ref{l:projection} is the key to generalizing from the case where $A$ is torsion-free, and in particular ensures the polynomial restrictions in Definition~\ref{d:eventualquasipolynomial}(b) are well-defined.  
\end{remark}

\begin{lemma}\label{l:projection}
Let $\rho:A \to \NN^d$ denote the projection map.  Elements $\alpha_1, \ldots, \alpha_r \subset A$ are linearly independent if and only if their projections $\rho(\alpha_1), \ldots, \rho(\alpha_r)$  are linearly independent.  Moreover, the restriction of $\rho$ to $\NN\alpha_1 + \cdots + \NN\alpha_r \subset A$ is a bijection.  
\end{lemma}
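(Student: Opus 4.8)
The plan is to dispatch one direction of the equivalence together with the ``moreover'' clause, and the other direction by contraposition, using only that $\rho\colon A = \NN^d \oplus T \to \NN^d$ is a surjective homomorphism of monoids together with the finiteness of $T$. Throughout I read ``$\alpha_1, \ldots, \alpha_r$ linearly independent'' as: the monoid map $\NN^r \to A$ sending the $i$-th standard basis vector to $\alpha_i$ is injective, equivalently $\sum_i n_i\alpha_i = \sum_i m_i\alpha_i$ forces $(n_i) = (m_i)$; I write $\alpha_i = (\rho(\alpha_i), t_i)$ with $t_i \in T$ and set $M = \NN\alpha_1 + \cdots + \NN\alpha_r \subset A$.

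For the implication ``$\rho(\alpha_i)$ independent $\implies \alpha_i$ independent'' and the bijection claim at once, I would take an arbitrary coincidence $\sum_i n_i\alpha_i = \sum_i m_i\alpha_i$ in $M$, apply $\rho$ to get $\sum_i n_i\rho(\alpha_i) = \sum_i m_i\rho(\alpha_i)$ in $\NN^d$, and invoke independence of the $\rho(\alpha_i)$ to conclude $(n_i) = (m_i)$. This shows simultaneously that the $\alpha_i$ are independent and that $\rho|_M$ is injective; since $\rho|_M$ visibly surjects onto $\NN\rho(\alpha_1) + \cdots + \NN\rho(\alpha_r)$, it is a bijection onto that submonoid, which is the content of the last sentence of the lemma.

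The reverse implication is the only place the torsion of $A$ enters, and is the step I expect to need the one genuine idea. I would argue contrapositively: assuming the $\rho(\alpha_i)$ are \emph{dependent}, fix $(n_i) \neq (m_i)$ in $\NN^r$ with $\sum_i n_i\rho(\alpha_i) = \sum_i m_i\rho(\alpha_i)$, and set $e = |T| \ge 1$, so that $e\tau = 0$ for every $\tau \in T$. Scaling the relation by $e$ and comparing the two coordinates of $A$ separately: the $\NN^d$-components of $\sum_i (en_i)\alpha_i$ and $\sum_i (em_i)\alpha_i$ agree, being $e$ times the two equal original sums, while both $T$-components vanish, since $e\sum_i n_i t_i = 0 = e\sum_i m_i t_i$. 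Hence $\sum_i (en_i)\alpha_i = \sum_i (em_i)\alpha_i$ in $A$ with $(en_i) \neq (em_i)$ (as $e \geq 1$), so the $\alpha_i$ are dependent. The point, and the only subtlety in the whole argument, is that passing through $\rho$ can lose at most a relation supported in the finite group $T$, and multiplying by $e = |T|$ annihilates it; everything else is a formal consequence of $\rho$ being a surjective monoid homomorphism.
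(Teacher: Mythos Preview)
Your proof is correct. The paper states this lemma without proof, so there is nothing to compare against; your argument supplies exactly the routine verification the paper omits. Your reading of ``linearly independent'' as injectivity of the map $\NN^r \to A$ is the right one in context (it is what makes the cone $C$ in Definition~\ref{d:eventualquasipolynomial} a copy of $\NN^r$), and the one nontrivial step---scaling a dependence among the $\rho(\alpha_i)$ by $e = |T|$ to kill the torsion component and produce a dependence among the $\alpha_i$---is handled cleanly. Note only that the ``moreover'' clause is implicitly under the independence hypothesis (it is false otherwise, e.g.\ for $\alpha_1 = (0,t)$ with $t \in T$ nonzero), and your argument respects this: you establish injectivity of $\rho|_M$ assuming the $\rho(\alpha_i)$ are independent, which by the equivalence you have just proved is the same hypothesis.
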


\begin{defn}\label{d:eventualquasipolynomial}
Fix $f:A \to \QQ$, linearly independent $\alpha_1, \ldots, \alpha_r \in A$, and $\beta \in A$.  
\begin{enumerate}[(a)]
\item 
The \emph{cone generated by $\alpha_1, \ldots, \alpha_r$ translated by $\beta$} is the set
$$C = C(\beta;\alpha_1, \ldots, \alpha_r) = \left\{\beta + \textstyle\sum_{j = 1}^r c_j\alpha_j : c_1, \ldots, c_r \in \NN\right\} \subset A.$$

\item 
The function $f$ is a \emph{simple quasipolynomial supported on a cone $C$} if (i) $f$ vanishes outside of $C$ and (ii) $f$ coincides with a polynomial $p$ when restricted to $C$ and projected onto $\NN^d$ (in the sense of Lemma~\ref{l:projection}).  The \emph{degree of $f$}, denoted $\deg(f)$, is the smallest possible degree for $p$, and the \emph{cumulative degree of~$f$} is $r + \deg(f)$.  


\item 
The function $f$ is \emph{eventually quasipolynomial} if it is a finite sum of simple quasipolynomials.  The \emph{cumulative degree} of $f$ is the minimal integer $k$ such that $f$ can be written as a finite sum of simple quasipolynomials of cumulative degree at most $k$.  

\end{enumerate}
\end{defn}

\begin{remark}\label{r:eventualquasipolynomial}
The terminology in Definition~\ref{d:eventualquasipolynomial} differs slightly from \cite{multiquasi}, where the term ``quasipolynomial'' is used in place of ``eventually quasipolynomial''.  However, Definition~\ref{d:eventualquasipolynomial} was chosen so that ``eventual quasipolynomial'' coincides with Definition~\ref{d:quasipolynomial} when $A = \NN$.  Example~\ref{e:eventualquasipolynomial} discusses this case in more detail.  
\end{remark}

\begin{thm}\label{t:affineequiv}
Given a function $f:A \to \QQ$, the following are equivalent.  
\begin{enumerate}[(a)]
\item 
The function $f$ is eventually quasipolynomial.  

\item 
There exists a finite collection of cones $C_1, \ldots, C_k \subset A$ such that $A = \bigcup_i C_i$ and $f$ coincides with a polynomial when restricted to each $C_i$.  

\item 
The function $f$ is a sum of simple quasipolynomials with disjoint support.  

\item 
Writing $A = \ZZ_{d_1} \oplus \cdots \oplus \ZZ_{d_m} \oplus \NN^d$ for $d_1, \ldots, d_m \in \ZZ_{> 1}$ and 
$$\QQ[\![A]\!] = \QQ[\![x_1, \ldots, x_{d+m}]\!]/\<x_i^{d_i} - 1 : 1 \le i \le m\>$$
for the formal power series ring over $A$ with rational coefficients, the generating function $F(\xx) \in \QQ[\![A]\!]$ for $f$ has the form 
$$F(\xx) = \sum_{\alpha \in A} f(\alpha)\xx^\alpha = \frac{P(\xx)}{\prod_{j=1}^r (1 - \xx^{\alpha_j})} \in \QQ[\![A]\!]$$
for some $\alpha_1, \ldots, \alpha_r \in A$ and $P \in \QQ[A]$.  

\end{enumerate}
\end{thm}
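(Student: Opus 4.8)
The plan is to prove Theorem~\ref{t:affineequiv} by establishing a cycle of implications, say $(a) \implies (c) \implies (b) \implies (d) \implies (a)$, leaning heavily on Fields' work \cite{multiquasi} and using Lemma~\ref{l:projection} to reduce torsion-free sub-semigroups to the case $A = \NN^d$ wherever the argument is already available there. The implication $(c) \implies (a)$ is immediate since a sum of simple quasipolynomials with disjoint support is in particular a finite sum of simple quasipolynomials. For $(a) \implies (c)$, I would start from a representation of $f$ as a sum of simple quasipolynomials $f_1, \ldots, f_s$ supported on cones $C_1, \ldots, C_s$, and then perform an inclusion-exclusion/refinement argument: intersect and subdivide the cones so that the underlying supports form a disjoint cover of the union, noting that the intersection of two translated simplicial cones is again a finite union of such cones (after triangulation) and that a polynomial restricted to a cone, when split across a subdivision, remains polynomial on each piece. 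One must check that on each piece the resulting function is still a \emph{simple} quasipolynomial, i.e. polynomial after projecting to $\NN^d$ via Lemma~\ref{l:projection}; this is where the projection lemma does the work, since it guarantees the projections of the cone generators remain linearly independent and the projection is a bijection on the relevant sub-semigroup, so ``polynomial on the cone'' and ``polynomial on its projection'' are interchangeable notions.

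For $(c) \implies (b)$, given $f$ as a sum of simple quasipolynomials with disjoint supports $C_1, \ldots, C_k$, I would enlarge this collection of cones so that they cover all of $A$: add finitely many cones covering the complement $A \minus \bigcup C_i$ (which is itself a finite union of translated cones, since the $C_i$ are translated rational cones and their complement in $A$ decomposes into finitely many such pieces), and on those added cones $f$ vanishes, hence agrees with the zero polynomial. On each $C_i$ from the original list, $f$ agrees with the corresponding polynomial $p_i$ — here again interpreting ``polynomial on $C_i$'' via the projection of Lemma~\ref{l:projection}, so that the statement of $(b)$ is literally about $f$ restricted to $C_i$. This yields a finite cover of $A$ by cones on each of which $f$ is polynomial.

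For $(b) \implies (d)$, I would pass to generating functions. Writing $\QQ[\![A]\!]$ as in the statement, it suffices to show that for each cone $C = C(\beta; \alpha_1, \ldots, \alpha_r)$ on which $f$ equals a polynomial $p$, the series $\sum_{\alpha \in C} f(\alpha) \xx^\alpha$ is a rational function with denominator $\prod_j (1 - \xx^{\alpha_j})$; then one combines the finitely many pieces from the cover (handling overlaps by inclusion-exclusion over intersections of cones, each intersection again being a finite union of translated cones) over a common denominator. The single-cone computation reduces, via the bijection $\rho|_C$ of Lemma~\ref{l:projection}, to the classical fact that $\sum_{c \in \NN^r} p(c)\, \xx^c$ is rational with denominator $\prod_j (1 - x_j)$ for any polynomial $p$, which follows by differentiating the geometric series; one then substitutes $x_j \mapsto \xx^{\alpha_j}$ and multiplies by $\xx^\beta$. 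Finally, $(d) \implies (a)$: expand each factor $1/(1 - \xx^{\alpha_j})$ as a geometric series, so $F(\xx)$ becomes a finite $\QQ$-linear combination of series of the form $\xx^{\gamma}/\prod_{j \in J}(1 - \xx^{\alpha_j})$ with $J \subseteq \{1, \ldots, r\}$ and $\{\alpha_j : j \in J\}$ — after discarding dependent generators and re-triangulating — linearly independent; each such series is the generating function of a simple quasipolynomial supported on the cone $C(\gamma; (\alpha_j)_{j \in J})$, since its coefficients count lattice points with multiplicity given by a polynomial (an Ehrhart-type count). Summing gives $f$ as a finite sum of simple quasipolynomials.

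The main obstacle I anticipate is the bookkeeping around \emph{torsion}: every classical step above (cone subdivision, complement decomposition, the rational-generating-function computation, the partial-fraction expansion) is standard over $\NN^d$, but here $A = \NN^d \oplus T$ carries a finite group summand, and one must consistently either (i) fiber the argument over the finitely many cosets of $T$ and apply the torsion-free case on each coset, or (ii) invoke Lemma~\ref{l:projection} to transport cone-theoretic and polynomiality statements back and forth across $\rho$. Neither is deep, but ensuring that ``simple quasipolynomial'' — defined in Definition~\ref{d:eventualquasipolynomial}(b) via projection — is preserved under all these operations requires care; in particular one must confirm that the complement of a finite union of translated cones in $A$ (not just in $\NN^d$) is again such a finite union, and that intersections behave well, which is exactly the content flagged in Remark~\ref{r:affine}. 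I would therefore state a short lemma at the outset recording these closure properties of the class of ``finite unions of translated rational cones in $A$'' and then run the four implications citing \cite{multiquasi} for the torsion-free core of each.
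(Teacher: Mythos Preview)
Your proposal is correct and aligns with the paper's approach, though you supply considerably more detail than the paper itself does. The paper's own proof is two sentences: for $A = \NN^d$ it cites \cite[Theorem~26]{multiquasi} for the equivalence of (a) and (d) and \cite[Theorems~2 and~12]{khov2} for the remaining equivalences, and then asserts that the proof for general $A$ is identical to those in the cited references. Your outline---establishing a cycle of implications, invoking the torsion-free case from \cite{multiquasi}, and handling torsion via Lemma~\ref{l:projection} or by fibering over the finitely many cosets of $T$---is exactly the content hidden behind that assertion, and the obstacles you flag (closure of finite unions of translated cones under complement and intersection in $A$, preservation of the ``simple quasipolynomial'' condition under subdivision) are precisely the bookkeeping that Remark~\ref{r:affine} alludes to. One minor wording issue: in your sketch of $(d)\implies(a)$ you say ``expand each factor $1/(1-\xx^{\alpha_j})$ as a geometric series'' to obtain a finite linear combination, but what you actually want there is to expand the numerator $P(\xx)$ as a finite sum of monomials and then, for each resulting term $\xx^\gamma/\prod_j(1-\xx^{\alpha_j})$, triangulate the (possibly non-simplicial) cone on the $\alpha_j$'s; the geometric-series expansion itself is infinite and is used only at the level of interpreting the coefficients.
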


\begin{proof}
If $A = \NN^d$, then \cite[Theorem~26]{multiquasi} proves the equivalence of (a) and (d), and \cite[Theorems~2 and~12]{khov2} prove the remaining equivalences.  The proof for general $A$ is identical to those given in the aforementioned references.  
\end{proof}

\begin{example}\label{e:eventualquasipolynomial}
Suppose $f:\NN \to \QQ$ is eventually quasipolynomial in the sense of Definition~\ref{d:eventualquasipolynomial}.  Since any cone in $\NN$ has dimension at most 1, Theorem~\ref{t:affineequiv}(c) implies there exist disjoint 1-dimensional cones $C_1, \ldots, C_k$ whose union contains all but finitely many elements of $\NN$ in such a way that $f$ coincides with a polynomial when restricted to each~$C_i$.  Each element of $\NN \setminus \bigcup_i C_i$ corresponds to a 0-dimensional cone.  This means $f$ coincides with a quasipolynomial (in the sense of Definition~\ref{d:quasipolynomial}) for all $n \in \bigcup_i C_i$, and the period of $f$ divides the least common multiple of the generators of $C_1, \ldots, C_k$.  
\end{example}

There are several concrete examples of eventually quasipolynomial functions in the later sections of this paper.  For instance, see Examples~\ref{e:factset} and~\ref{e:deltaaffine}.  

\begin{thm}\label{t:hilbert2}
Fix an $A$-graded $\kk$-algebra $S$, and a finitely generated, modestly graded $S$-module $M$.  The Hilbert function $\mathcal H(M;-)$ of $M$ is eventually quasipolynomial of cumulative degree $\dim M$.  
\end{thm}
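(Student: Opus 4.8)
The plan is to derive eventual quasipolynomiality from the generating-function criterion of Theorem~\ref{t:affineequiv}(d), and then to pin down the cumulative degree by coarsening the $A$-grading to an $\NN$-grading and appealing to the classical Hilbert theorem (Theorem~\ref{t:hilbert}). The case $\dim M=0$ is immediate, since then $M$ is finite dimensional, so $\mathcal H(M;-)$ has finite support and is a finite sum of simple quasipolynomials supported on $0$-dimensional cones, of cumulative degree $0$; so assume $\dim M\ge 1$.

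First I would present $M$ as a finitely generated graded module over a graded polynomial ring $R=\kk[y_1,\ldots,y_p]$ in which every $\deg y_i\in A$ has nonzero image under $\rho$; this is possible after absorbing the finite-dimensional subalgebra $\bigoplus_{\tau\in T}S_{(\0,\tau)}$ of $S$ into the coefficients, which also makes $R$ modestly graded. A finite graded free resolution of $M$ over $R$ (Hilbert's syzygy theorem) then yields
\[
F_M(\xx)\;=\;\sum_{\alpha\in A}\mathcal H(M;\alpha)\,\xx^\alpha\;=\;\frac{\sum_k c_k\,\xx^{\beta_k}}{\prod_{i=1}^p(1-\xx^{\deg y_i})}\in\QQ[\![A]\!]
\]
for finitely many $c_k\in\ZZ$ and $\beta_k\in A$. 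This has the form required in Theorem~\ref{t:affineequiv}(d), so $\mathcal H(M;-)$ is eventually quasipolynomial and, by Theorem~\ref{t:affineequiv}(c), is a finite sum $\sum_i f_i$ of simple quasipolynomials with pairwise disjoint supports; say $f_i$ is supported on a cone $C_i$ of dimension $r_i$ and coincides there, after projection by $\rho$, with a polynomial of degree $p_i=\deg f_i$, so that $r_i+p_i$ is the cumulative degree of $f_i$.

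Next, fix a monoid homomorphism $\lambda:A\to\NN$ vanishing on $T$ and strictly positive on $\NN^d\minus\{\0\}$ (for instance the sum of coordinates composed with $\rho$). Regrading by $\lambda$ makes $S$ into an $\NN$-graded $\kk$-algebra $S^{(\lambda)}$ and $M$ into a finitely generated graded $S^{(\lambda)}$-module $M^{(\lambda)}$ with $\mathcal H(M^{(\lambda)};n)=\sum_{\lambda(\alpha)=n}\mathcal H(M;\alpha)$, each sum finite because $\lambda$ is positive. Krull dimension ignores the grading, so $\dim M^{(\lambda)}=\dim M$, and Theorem~\ref{t:hilbert} gives that $\mathcal H(M^{(\lambda)};-)$ agrees for $n\gg 0$ with a quasipolynomial of degree $\dim M-1$. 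On the other hand, by Lemma~\ref{l:projection} the map $\rho$ restricts to a bijection on each $C_i$ and no generator of $C_i$ is pure torsion, so $\lambda$ is strictly positive on the generators of $C_i$; hence the level set $\{\alpha\in C_i:\lambda(\alpha)=n\}$ is finite and grows like $n^{r_i-1}$, and summing the degree-$p_i$ polynomial representing $f_i$ over it gives a quasipolynomial $g_i(n)$ with $\deg g_i\le p_i+r_i-1$. Therefore $\deg_n\mathcal H(M^{(\lambda)};-)=\deg_n\sum_i g_i\le\max_i(r_i+p_i)-1$; as this holds for every simple-quasipolynomial decomposition of $\mathcal H(M;-)$, it holds for the minimal one, giving $\deg_n\mathcal H(M^{(\lambda)};-)\le(\text{cumulative degree of }\mathcal H(M;-))-1$. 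For the reverse inequality I would use that $\mathcal H(M;-)\ge 0$: since the supports are disjoint, each $f_i$ is non-negative on $C_i$, so the top-degree form of its representing polynomial is non-negative and not identically zero on $\rho(C_i)$; hence $\deg g_i=p_i+r_i-1$ with non-negative leading quasi-coefficient, and no cancellation can occur among the leading terms of the $g_i$, so $\deg_n\sum_i g_i=\max_i(r_i+p_i)-1$. Combining the two estimates, $(\text{cumulative degree of }\mathcal H(M;-))-1=\deg_n\mathcal H(M^{(\lambda)};-)=\dim M-1$, which is the assertion.

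The step I expect to be the main obstacle is the last comparison — excluding cancellation in the top-degree terms after coarsening. It is not purely formal: it leans essentially on the disjoint-support decomposition of Theorem~\ref{t:affineequiv}(c) together with the non-negativity of Hilbert functions, which jointly force each $f_i$ to be non-negative on its cone and hence its slice-sums to have genuine top-degree behavior. A secondary, more clerical obstacle is the torsion of $A$ and the degree components projecting to $\0\in\NN^d$: these must be quarantined when constructing $R$ in the first step, and Lemma~\ref{l:projection} is exactly what prevents them from occurring among the generators of the cones $C_i$, ensuring $\lambda$ stays strictly positive on each cone.
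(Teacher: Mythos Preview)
Your argument is correct and substantially more detailed than the paper's own proof, which is a one-line citation: ``Apply Theorem~\ref{t:affineequiv} and \cite[Theorem~8.41]{cca}.''  The Miller--Sturmfels result packages both the rationality of the multigraded Hilbert series and the identification of cumulative degree with Krull dimension into a single statement, so the paper defers the entire content to that reference.  You instead unpack both pieces: rationality via a graded free resolution (which is essentially how \cite[Theorem~8.41]{cca} is proved anyway), and the dimension statement via an independent coarsening argument that reduces to the classical Theorem~\ref{t:hilbert}.  The latter, with its use of the disjoint-support decomposition from Theorem~\ref{t:affineequiv}(c) and non-negativity of $\mathcal H(M;-)$ to preclude cancellation of leading terms, is a genuinely different route to the degree claim, and has the advantage of being self-contained within the framework the paper has already set up.

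Two minor technical remarks.  First, neither your reduction to a polynomial ring nor the cited Miller--Sturmfels theorem goes through unless $S$ is finitely generated as a $\kk$-algebra; the theorem statement omits this hypothesis, but it is satisfied in every application in the paper and should be regarded as implicit (indeed, without it the conclusion can fail: take $S=\kk[x_1,x_2,\ldots]$ with $\deg x_i=i$, whose Hilbert function is the partition function).  Second, your phrase ``absorbing the finite-dimensional subalgebra $\bigoplus_{\tau\in T}S_{(\0,\tau)}$ into the coefficients'' is not quite what happens, since that subalgebra need not be a field.  The clean statement is that every homogeneous element of $S$ with pure-torsion degree lies in this finite-dimensional subalgebra and is therefore integral over $\kk$; hence $S$ is module-finite over the subalgebra generated by homogeneous elements of non-torsion degree, and $M$ is already finitely generated over a polynomial ring whose variables all have nonzero image under $\rho$.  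With those adjustments your proof stands, and your identification of the non-cancellation step as the crux is exactly right.
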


\begin{proof}
Apply Theorem~\ref{t:affineequiv} and \cite[Theorem~8.41]{cca}.  
\end{proof}

\begin{remark}\label{r:modestgrading}
Throughout the remainder of this paper, all $\kk$-algebra and module gradings will be modest, and as such this word is often omitted.  See \cite[Section~8.4]{cca} for a thorough discussion of modest gradings.  
\end{remark}

\section{The number of distinct factorizations}\label{s:factorizations}

After introducing some notation for factorizations (Definition~\ref{d:factset}) in the context of finitely generated semigroups and numerical semigroups (Definition~\ref{d:affine}), we examine the number of distinct factorizations of semigroup elements.  The main result of this section is Proposition~\ref{p:factorhilbert}, which presents the connection between Hilbert functions and factorization invariants on which the rest of this paper is based.  As a direct consequence, we recover an alternative proof of an asymptotics result from the literature (Theorem~\ref{t:factorasymp}) and its specialization to numerical semigroups (Corollary~\ref{c:factornumerical}).  

\begin{conv}\label{con:semigroups}
Throughout the rest of this paper, $\Gamma = \<\alpha_1, \ldots, \alpha_r\> \subset A$ denotes a finitely generated, reduced subsemigroup of $A$.  Whenever we write $\Gamma = \<\alpha_1, \ldots, \alpha_r\>$, we assume the elements $\alpha_1, \ldots, \alpha_r$ comprise the (unique) minimal generating set for $\Gamma$.  
\end{conv}

\begin{defn}\label{d:affine}
A semigroup $\Gamma$ is \emph{affine} if $\Gamma \subset \NN^d$.  If $\Gamma \subset \NN$ and $\gcd(\Gamma) = 1$, we say $\Gamma$ is a \emph{numerical semigroup}.  
\end{defn}

\begin{defn}\label{d:factset}
Fix a finitely generated semigroup $\Gamma = \<\alpha_1, \ldots, \alpha_r\> \subset A$.  The elements $\alpha_1, \ldots, \alpha_r$ comprising the unique minimal generating set of $\Gamma$ are called \emph{irreducible} (or \emph{atoms}).  A \emph{factorization} of $\alpha \in \Gamma$ is an expression 
$$\alpha = a_1\alpha_1 + \cdots + a_r\alpha_r$$
of $\alpha$ as a finite sum of atoms, which we denote by the $r$-tuple $\aa = (a_1, \ldots, a_r) \in \NN^r$.  Write $\mathsf Z_\Gamma(\alpha)$ for the \emph{set of factorizations} of an element $\alpha \in \Gamma$, viewed as a subset of~$\NN^r$.  If $\Gamma$ is a numerical semigroup, we assume $\alpha_1 < \cdots < \alpha_r$.  
\end{defn}


\begin{example}\label{e:factset}
Consider the affine semigroup $\Gamma = \<(2,1),(1,1),(1,2)\> \subset \NN^2$.  Restricting $|\mathsf Z_\Gamma(-)|$ to the cone $C((0,0);(2,1),(3,3))$ yields a simple quasipolynomial of degree~1 given by $|\mathsf Z_\Gamma(x,y)| = -\frac{1}{3}x + \frac{2}{3}y + 1$.  In fact, the union of the six cones below equals~$\Gamma$, and restricting $|\mathsf Z_\Gamma(-)|$ to each cone yields a simple quasilinear function.  The cones in the first row are depicted in Figure~\ref{f:factset}, and those in the second row are reflections of those in the first row about the line $y = x$.  

\begin{center}
\begin{tabular}{c@{\hspace{0.4in}}c@{\hspace{0.4in}}c}
$C((0,0);(2,1),(3,3))$ & $C((1,1);(2,1),(3,3))$ & $C((2,2);(2,1),(3,3))$ \\
$C((0,0);(1,2),(3,3))$ & $C((1,1);(1,2),(3,3))$ & $C((2,2);(1,2),(3,3))$ \\
\end{tabular}
\end{center}

This demonstrates that $|\mathsf Z_\Gamma(-)|$ is eventually quasilinear by Theorem~\ref{t:affineequiv}(b).  One can also express $|\mathsf Z_\Gamma(-)|$ as the sum of these six simple quasilinear functions minus the restriction of $|\mathsf Z_\Gamma(-)|$ to each nonempty intersection therein, each of which is a translation of $C((0,0);(3,3))$.  The existence of both of these expressions is ensured by Proposition~\ref{p:factorhilbert}, and Remark~\ref{r:factset} explains how each function may be computed.  
\end{example}

\begin{figure}
\begin{tabular}{c@{\hspace{0.2in}}c@{\hspace{0.2in}}c}

\includegraphics[width=1.5in]{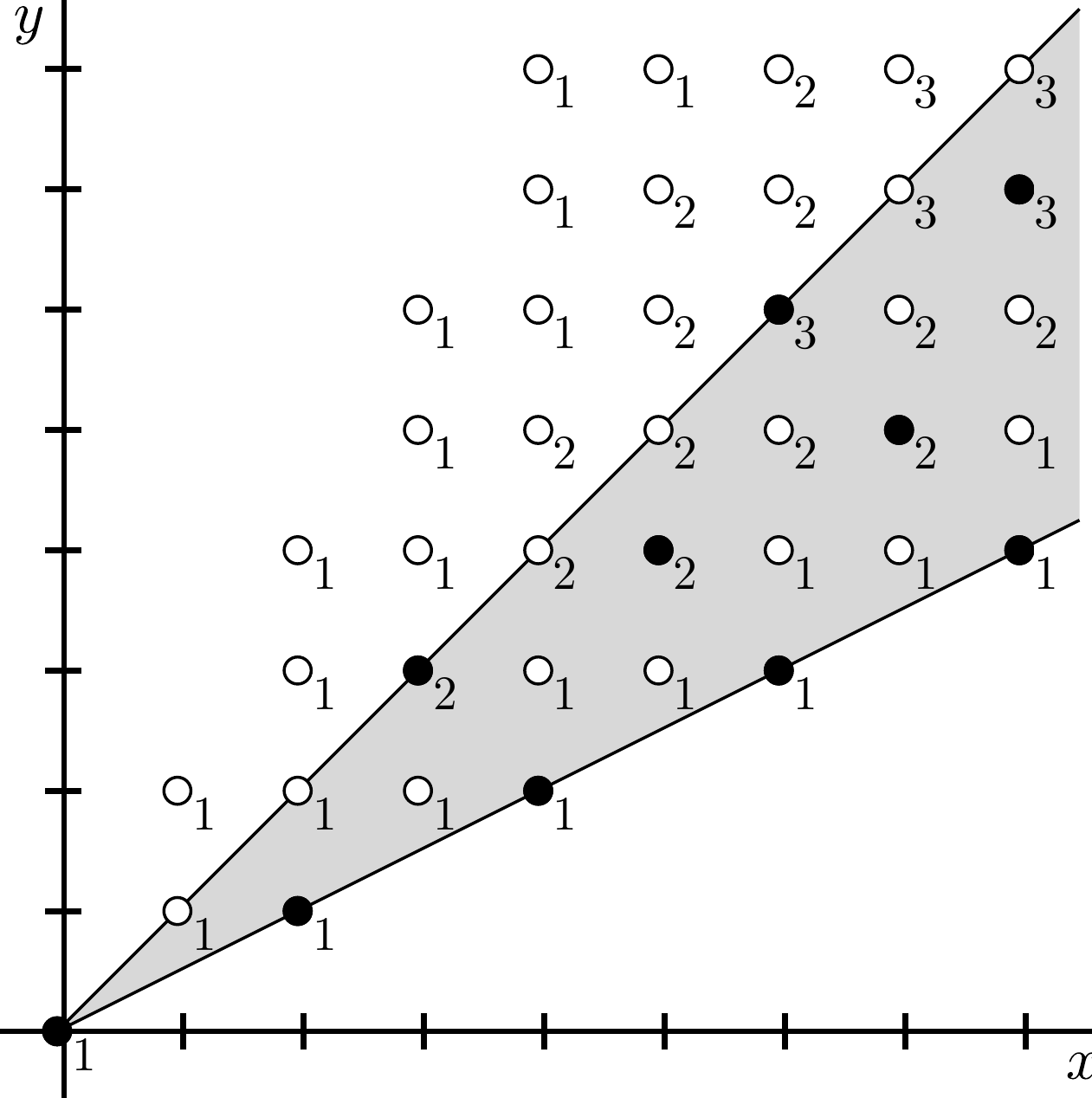}
&
\includegraphics[width=1.5in]{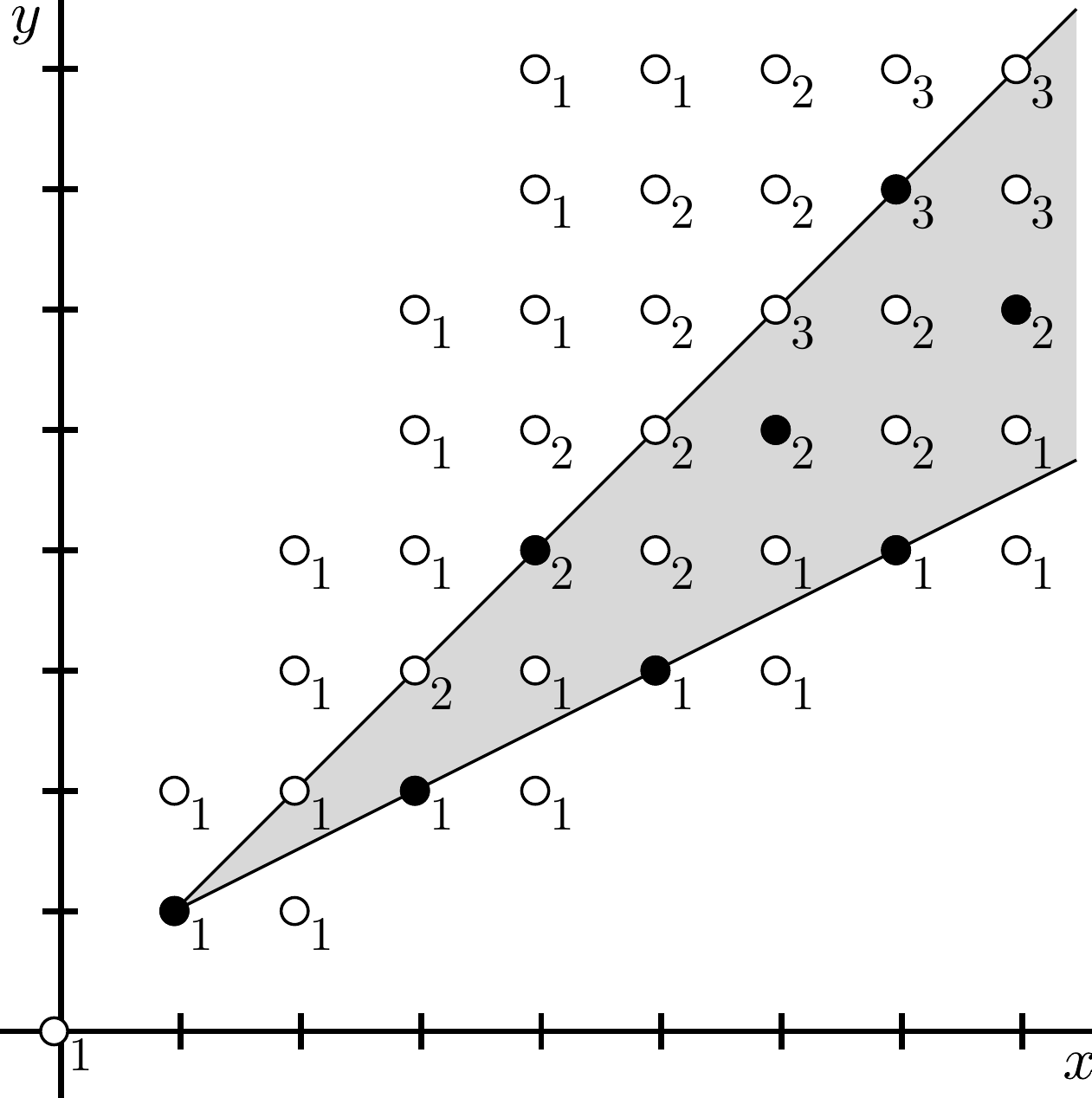}
&
\includegraphics[width=1.5in]{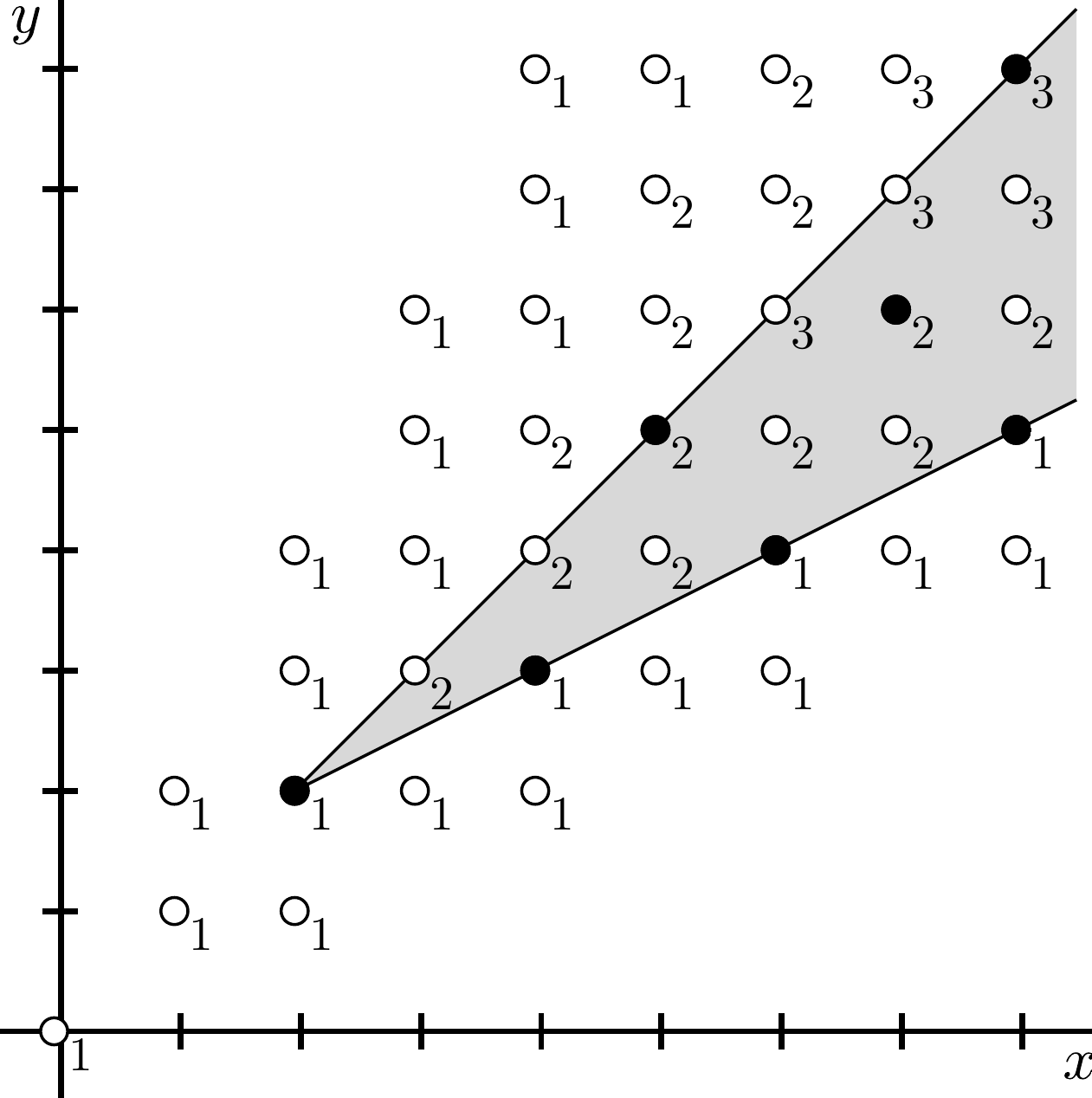}
\\
$\scriptstyle{C((0,0);(2,1),(3,3))}$
&
$\scriptstyle{C((1,1);(2,1),(3,3))}$
&
$\scriptstyle{C((2,2);(2,1),(3,3))}$
\end{tabular}
\caption{The values above represent the number of distinct factorizations of elements of $\Gamma = \<(1,2),(1,1),(2,1)\> \subset \NN^2$.  The filled dots in each plot depict one of the cones in Example~\ref{e:factset}.}  
\label{f:factset}
\end{figure}

\begin{defn}\label{d:factring}
Suppose $\Gamma = \<\alpha_1, \ldots, \alpha_r\> \subset A$.  
The $A$-graded ring $R_\Gamma = \kk[y_1, \ldots, y_r]$ with $\deg(y_i) = \alpha_i$ for each $i$ is called the \emph{ring of factorizations of $\Gamma$}.  
\end{defn}

Proposition~\ref{p:factorhilbert} justifies the nomenclature in Definition~\ref{d:factring}.  

\begin{prop}\label{p:factorhilbert}
Suppose $\Gamma = \<\alpha_1, \ldots, \alpha_r\> \subset A$.  
The equality
$$\mathcal H(R_\Gamma;\alpha) = |\mathsf Z_\Gamma(\alpha)|$$
holds for all $\alpha \in A$.  In particular, the function $\Gamma \to \NN$ given by $\alpha \mapsto |\mathsf Z_\Gamma(\alpha)|$ is eventually quasipolynomial of cumulative degree $r$.  
\end{prop}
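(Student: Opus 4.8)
The plan is to identify the monomial $\kk$-basis of $R_\Gamma$ with the set of all factorizations, graded appropriately, and then invoke Theorem~\ref{t:hilbert2}. Concretely, $R_\Gamma = \kk[y_1,\ldots,y_r]$ has as a $\kk$-vector space basis the set of monomials $\yy^\aa = y_1^{a_1}\cdots y_r^{a_r}$ with $\aa = (a_1,\ldots,a_r) \in \NN^r$. Under the $A$-grading in Definition~\ref{d:factring}, the monomial $\yy^\aa$ is homogeneous of degree $\deg(\yy^\aa) = \sum_{i=1}^r a_i\alpha_i \in A$. Hence for a fixed $\alpha \in A$, the graded piece $(R_\Gamma)_\alpha$ is the $\kk$-span of exactly those monomials $\yy^\aa$ with $\sum_i a_i\alpha_i = \alpha$, i.e.\ those $\aa \in \NN^r$ that are factorizations of $\alpha$ in the sense of Definition~\ref{d:factset}. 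Since distinct monomials are $\kk$-linearly independent, $\dim_\kk (R_\Gamma)_\alpha = |\mathsf{Z}_\Gamma(\alpha)|$, which is the asserted equality $\mathcal H(R_\Gamma;\alpha) = |\mathsf Z_\Gamma(\alpha)|$. (Note this also confirms the grading is modest: each fiber $\mathsf Z_\Gamma(\alpha)$ is finite because $\Gamma$ is reduced, so the $\alpha_i$ are nonzero in $A = \NN^d \oplus T$ and only finitely many $\aa$ can satisfy $\sum_i a_i\alpha_i = \alpha$ — the $\NN^d$ coordinates bound the $a_i$.)

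For the second assertion, I would first check that $R_\Gamma$ is a finitely generated, modestly $A$-graded $\kk$-algebra (it is generated by $y_1,\ldots,y_r$, and modesty was just observed) and a finitely generated graded module over itself. Then Theorem~\ref{t:hilbert2} applies directly: $\mathcal H(R_\Gamma;-)$ is eventually quasipolynomial of cumulative degree $\dim R_\Gamma$. It remains only to compute this Krull dimension. Since $R_\Gamma$ is a polynomial ring in $r$ variables over a field, $\dim R_\Gamma = r$, giving cumulative degree $r$ as claimed. Combining with the first part, the function $\alpha \mapsto |\mathsf Z_\Gamma(\alpha)|$ on $\Gamma$ (extended by zero to $A$, which is already how $\mathcal H(R_\Gamma;-)$ behaves since $(R_\Gamma)_\alpha = 0$ for $\alpha \notin \Gamma$) is eventually quasipolynomial of cumulative degree $r$.

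There is essentially no serious obstacle here — the content is the translation dictionary "monomials $\leftrightarrow$ factorizations, monomial degree $\leftrightarrow$ semigroup element being factored," after which both claims are immediate from Theorem~\ref{t:hilbert2}. The one point requiring a line of care is modesty of the grading (finiteness of each $\mathsf Z_\Gamma(\alpha)$), which follows from $\Gamma$ being reduced as noted above; this is what lets us apply Theorem~\ref{t:hilbert2} and also what makes $\mathcal H(R_\Gamma;-)$ well-defined as an integer-valued function in the first place.
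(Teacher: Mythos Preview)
Your proof is correct and follows essentially the same approach as the paper: establish the bijection between degree-$\alpha$ monomials in $R_\Gamma$ and factorizations in $\mathsf Z_\Gamma(\alpha)$, then invoke Theorem~\ref{t:hilbert2}. You supply a few details the paper leaves implicit (the modesty check and the explicit observation that $\dim R_\Gamma = r$), but the argument is the same.
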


\begin{proof}
Each monomial $\yy^\aa = y_1^{a_1} \cdots y_k^{a_r} \in R_\Gamma$ has degree $\alpha = a_1\alpha_1 + \cdots + a_r\alpha_r \in \Gamma$.  This gives, for each $\alpha \in A$, a bijection between the set $\mathsf Z_\Gamma(\alpha)$ of factorizations of $\alpha$ in $\Gamma$ and the set of degree $\alpha$ monomial elements of $R_\Gamma$.  In particular, this means $\mathcal H(R_\Gamma;\alpha) = |\mathsf Z_\Gamma(\alpha)|$, and the second claim follows by Theorem~\ref{t:hilbert2}.  
\end{proof}

\begin{remark}\label{r:factset}
In view of Proposition~\ref{p:factorhilbert}, the eventual quasipolynomial given in Example~\ref{e:factset} for the number of factorizations of $\Gamma = \<(2,1),(1,1),(1,2)\> \subset \NN^2$ can be verified (and in fact, derived) by examining the generating function of $\mathcal H(R_\Gamma;-)$, called the \emph{Hilbert series} of $R_\Gamma$.  See~\cite{multiquasi} for more detail on such computations.  
\end{remark}

Theorem~\ref{t:factorasymp} is a consequence of the bijection established in Proposition~\ref{p:factorhilbert} that strengthens \cite[Theorem~1.1]{numfactoratomic} and \cite[Theorem~1]{factorasymp} for finitely generated semigroups.  

\begin{thm}\label{t:factorasymp}
Fix $\alpha \in \Gamma = \<\alpha_1, \ldots, \alpha_r\> \subset A$.  
Let $r(\alpha)$ denote the maximal number of linearly independent factorizations of multiples of $\alpha$ in $\Gamma$, that is, 
$$r(\alpha) = \dim_\QQ\spann_\QQ(\textstyle\bigcup_{k \ge 0} \mathsf Z_\Gamma(k\alpha)).$$
The function $|\mathsf Z_\Gamma(k\alpha)|$ is eventually quasipolynomial in $k$ of degree $r(\alpha) - 1$ whose leading coefficient is constant.  In particular, for some $B(\alpha) \in \QQ_{>0}$, we have
$$|\mathsf Z_\Gamma(k\alpha)| = B(\alpha)k^{r(\alpha)-1} + O(k^{r(\alpha)-2})$$
for $k$ sufficiently large.  
\end{thm}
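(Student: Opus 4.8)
The plan is to reduce the multigraded statement of Theorem~\ref{t:factorasymp} to the classical one-variable Hilbert theorem (Theorem~\ref{t:hilbert}) by restricting the $A$-graded ring $R_\Gamma$ along the single element $\alpha$. Concretely, fix $\alpha \in \Gamma$ and consider the $\ZZ_{\ge 0}$-indexed family of subspaces $N_k = \bigoplus_{\aa \in \mathsf Z_\Gamma(k\alpha)} \kk \cdot \yy^\aa \subset R_\Gamma$, i.e. the span of all monomials whose $A$-degree is exactly $k\alpha$. Then $N = \bigoplus_{k \ge 0} N_k$ is the subspace of $R_\Gamma$ supported in degrees $\ZZ_{\ge 0}\alpha$, and by Proposition~\ref{p:factorhilbert} its Hilbert function with respect to this $\NN$-grading is exactly $k \mapsto \mathcal H(R_\Gamma; k\alpha) = |\mathsf Z_\Gamma(k\alpha)|$. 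The first key step is to show $N$ is a finitely generated module over a suitable $\NN$-graded subalgebra of $R_\Gamma$.

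The natural choice is the following. Among the atoms $\alpha_1,\dots,\alpha_r$, some subset $\{\alpha_i : i \in I\}$ will have the property that $\alpha_i$ lies in $\QQ_{\ge 0}\alpha$; in fact the monomials $y_i$ with $\alpha_i \in \ZZ_{\ge 0}\alpha$ generate a graded subalgebra whose degree-$k\alpha$ piece sits inside $N_k$. More robustly: let $S_0 = \kk[y_i : \alpha_i \in \ZZ_{> 0}\alpha]$ (possibly just $\kk$ if no atom is a positive multiple of $\alpha$), graded by $\deg y_i = $ the integer $n_i$ with $\alpha_i = n_i\alpha$. One checks $S_0$ acts on $N$. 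To get finite generation one should instead pass to the Veronese-type subring generated by all monomials $\yy^\aa$ of $A$-degree $\alpha$ itself, or more simply invoke that $N$ is a module over $R_\Gamma$ restricted appropriately; the cleanest route is: the set $\{k : \mathsf Z_\Gamma(k\alpha) \neq \nothing\}$ is a numerical semigroup (after dividing by its gcd, using $\Gamma$ reduced and $\alpha \neq 0$), and $N$ is a finitely generated graded module over the affine semigroup ring $\kk[\,\text{monomials of degree } k\alpha, k \ge 0\,]$, which is itself a finitely generated $\NN$-graded $\kk$-algebra by Gordan's lemma. This is where I expect the main obstacle: carefully justifying finite generation of $N$ as a module over this subring, i.e. producing finitely many monomials whose $S_0$-multiples span all of $N$. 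The right tool is that $N$ is a submodule of the finitely generated $R_\Gamma$, hence Noetherian over $R_\Gamma$, and the subring in question contains a homogeneous system of parameters for $N$ because $\dim N = r(\alpha)$ — the Krull dimension of $N$ equals the $\QQ$-dimension of the $\QQ$-span of the exponent vectors appearing in $N$, which is exactly $\dim_\QQ \spann_\QQ(\bigcup_k \mathsf Z_\Gamma(k\alpha)) = r(\alpha)$ by Definition~\ref{d:factset} and the standard computation of the dimension of a monomial algebra/module.

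Once $N$ is a finitely generated graded module of dimension $r(\alpha)$ over an $\NN$-graded $\kk$-algebra, Theorem~\ref{t:hilbert} applies directly and gives that $|\mathsf Z_\Gamma(k\alpha)| = \mathcal H(N;k)$ agrees for $k \gg 0$ with a quasipolynomial of degree $r(\alpha) - 1$. It remains to argue the leading coefficient $a_{r(\alpha)-1}$ is constant. For this I would invoke Theorem~\ref{t:constcoeffs}: the leading coefficient is constant provided $r(\alpha) - 1 \ge \dim N/IN$ where $I = \langle x \in S : \gcd(\pi, \deg x) = 1\rangle$ and $\pi$ is the period. Since $N$ contains monomials $\yy^\aa$ in every sufficiently large degree $k$ lying in the relevant numerical semigroup (again using $\Gamma$ reduced, so eventually $\mathsf Z_\Gamma(k\alpha) \ne \nothing$), the quotient $N/IN$ has strictly smaller dimension — killing one "generic-degree" variable drops the dimension by one — so $\dim N/IN \le r(\alpha) - 1$ and the criterion is met. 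This yields the displayed asymptotic $|\mathsf Z_\Gamma(k\alpha)| = B(\alpha)k^{r(\alpha)-1} + O(k^{r(\alpha)-2})$ with $B(\alpha) \in \QQ_{>0}$, positivity being automatic since the Hilbert function is eventually positive (the module is nonzero in all large admissible degrees) and the lower-order terms are absorbed into the error. The only genuinely delicate points are the dimension count $\dim N = r(\alpha)$ and the finite generation of $N$ over the chosen subalgebra; both follow from standard facts about affine semigroup rings (Gordan's lemma, and the equality of Krull dimension with the rank of the exponent lattice), so the write-up should be short.
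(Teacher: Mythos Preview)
Your approach is essentially the paper's: restrict $R_\Gamma$ to $A$-degrees in $\NN\alpha$, regrade by the multiple $k$, and apply Theorems~\ref{t:hilbert} and~\ref{t:constcoeffs}. The one simplification you are missing is that your $N$ \emph{is} the subring $R = \kk[\yy^\aa : \aa \in \bigcup_{k\ge 0}\mathsf Z_\Gamma(k\alpha)]$ (a product of monomials in degrees $k\alpha$ and $\ell\alpha$ lands in degree $(k+\ell)\alpha$), so there is no separate ``finite generation of $N$ as a module over a subring'' obstacle --- you apply Theorem~\ref{t:hilbert} with $S = M = R$, and the only thing to check is that $R$ is a finitely generated $\kk$-algebra, which is your Gordan's-lemma step. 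For the constant leading coefficient the paper's argument is also shorter than yours: since $\alpha \in \Gamma$ we have $\mathsf Z_\Gamma(\alpha)\ne\emptyset$, so $R$ has a generator of degree~$1$; degree~$1$ is coprime to any period $\pi$, so the ideal $I$ of Theorem~\ref{t:constcoeffs} is nonzero, and since $R$ is a domain this forces $\dim R/IR < \dim R = r(\alpha)$.
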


\begin{proof}
By Proposition~\ref{p:factorhilbert} and Theorem~\ref{t:affineequiv}(c), $|\mathsf Z_\Gamma(k\alpha)|$ is eventually quasipolynomial in $k$ of degree at most $r$.  Let $f$ denote this quasipolynomial, and consider the subring 
$$R = \kk[\yy^\aa : \aa \in \mathsf Z_\Gamma(k\alpha), k \ge 0] \subset R_\Gamma$$
whose monomials correspond to the factorizations of $k\alpha$ for some $k \ge 0$.  Each monomial in $R$ has degree $k\alpha$ for some $k \ge 0$, so $R$ can be $\NN$-graded with $\deg(\yy^\aa) = k$ for $\aa \in \mathsf Z_\Gamma(k\alpha)$.  This implies $\mathcal H(R;k) = f(k)$ for $k \gg 0$.  Since $\dim R = r(\alpha)$, we have $\deg(f) = r(\alpha) - 1$.  Additionally, $R$ has at least one generator of degree 1 since $\mathsf Z_\Gamma(\alpha) \ne \emptyset$, so the ideal $I$ defined in Theorem~\ref{t:constcoeffs} is nonempty.  This ensures the leading term of $f$ is constant.  
\end{proof}

Theorem~\ref{t:factornumerical} specializes Theorem~\ref{t:factorasymp} to numerical semigroups $\Gamma$, resulting in a closed form for the constant leading coefficient of $|\mathsf Z_\Gamma(-)|$ in this setting (Corollary~\ref{c:factornumerical}).  

\begin{thm}\label{t:factornumerical}
Fix a numerical semigroup $\Gamma = \<n_1, \ldots, n_r\> \subset \NN$.  There exist periodic functions $a_0, \ldots, a_{d-2}:\NN \to \QQ$, each with period dividing $\lcm(n_1, \ldots, n_r)$, such that
$$|\mathsf Z_\Gamma(n)| = \frac{1}{(r-1)! n_1 \cdots n_r}n^{r-1} + a_{r-2}(n)n^{r-2} + \cdots + a_1(n)n + a_0(n)$$
for all $n \ge 0$.
\end{thm}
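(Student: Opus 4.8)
The plan is to realize $|\mathsf Z_\Gamma(-)|$ as a Hilbert function and invoke Theorem~\ref{t:hilbert} together with the period refinement in Theorem~\ref{t:hilbert2}'s $\NN$-graded ancestor. Concretely, since $\Gamma \subset \NN$ is a numerical semigroup, the ring of factorizations $R_\Gamma = \kk[y_1,\ldots,y_r]$ carries a standard $\NN$-grading (not merely the $A = \NN$-grading by degree $\alpha_i$) obtained by collapsing: set $\deg(y_i) = 1$ for all $i$. Wait — that records the number of atoms, not the semigroup element. Instead I would keep the $\NN$-grading $\deg(y_i) = n_i$, so that $R_\Gamma$ is a finitely generated $\NN$-graded $\kk$-algebra of Krull dimension $r$ (it is a polynomial ring), and by Proposition~\ref{p:factorhilbert} its Hilbert function in degree $n$ is exactly $|\mathsf Z_\Gamma(n)|$. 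Hilbert's theorem (Theorem~\ref{t:hilbert}) then gives, for $n \gg 0$, a quasipolynomial expression of degree $r-1$, and since $y_1,\ldots,y_r$ themselves form a homogeneous system of parameters of degrees $n_1,\ldots,n_r$, the period of every coefficient divides $\lcm(n_1,\ldots,n_r)$.

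Next I would pin down the leading coefficient. The Hilbert series of $R_\Gamma$ under this grading is the rational function $\prod_{i=1}^r (1 - t^{n_i})^{-1}$. Writing $1 - t^{n_i} = (1-t)(1 + t + \cdots + t^{n_i - 1})$, one factors out $(1-t)^{-r}$ and the remaining factor $\prod_i (1 + \cdots + t^{n_i-1})^{-1}$ evaluated at $t = 1$ equals $\prod_i n_i^{-1}$. A standard partial-fractions argument (the pole at $t=1$ has order $r$, with residue governed by this evaluation) shows the top-degree term of the Hilbert quasipolynomial is $\tfrac{1}{(r-1)!\, n_1 \cdots n_r} n^{r-1}$, independent of $n$; this is exactly the constancy of the leading coefficient already recorded in Theorem~\ref{t:factorasymp} (apply it with $\alpha$ a single atom, or more directly observe $R_\Gamma$ has a degree-1 generator only after rescaling — so instead I would run the partial-fractions computation on $\prod(1-t^{n_i})^{-1}$ directly, which is entirely routine). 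Alternatively, one can cite the classical fact that the Hilbert quasipolynomial of $\kk[y_1,\ldots,y_r]$ with $\deg y_i = n_i$ has this leading term.

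The remaining point is the replacement of ``$n \gg 0$'' by ``$n \ge 0$''. This is where a little care is needed: Hilbert's theorem only guarantees agreement for large $n$, whereas the claim asserts equality for all $n \ge 0$. The resolution is that for a polynomial ring, there is no correction term — the Hilbert series $\prod_i(1-t^{n_i})^{-1}$ is globally a rational function with denominator $\prod_i(1-t^{n_i})$ and numerator $1$, so its power series coefficients are genuinely given by a quasipolynomial of period dividing $\lcm(n_1,\ldots,n_r)$ for every $n \ge 0$, with no finite exceptional set. Equivalently, the module $M = R_\Gamma$ has depth equal to its dimension (it is Cohen--Macaulay, indeed regular), so the Hilbert function equals the Hilbert quasipolynomial in all nonnegative degrees; I would invoke this Cohen--Macaulay criterion, or simply observe directly from the rational-function form that the stated quasipolynomial reproduces all coefficients.

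The main obstacle is the bookkeeping in the last two steps: extracting the precise leading coefficient $\tfrac{1}{(r-1)!\,n_1\cdots n_r}$ from the partial fraction decomposition, and justifying the ``for all $n \ge 0$'' rather than ``for $n \gg 0$.'' Neither is deep — the first is a residue computation at $t=1$, the second follows from regularity of $R_\Gamma$ — but both must be stated carefully since the theorem makes an exact, not asymptotic, claim. Everything else is an immediate specialization of Theorem~\ref{t:hilbert} via Proposition~\ref{p:factorhilbert}.
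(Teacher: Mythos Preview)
Your approach is correct but differs from the paper's. The paper proceeds by induction on $r$: it uses the short exact sequence
\[
0 \longrightarrow R_\Gamma(-n_r) \xrightarrow{\cdot y_r} R_\Gamma \longrightarrow R_\Gamma/\langle y_r\rangle \longrightarrow 0
\]
together with the inductive hypothesis applied to $\Gamma' = \langle n_1/c, \ldots, n_{r-1}/c\rangle$ (where $c = \gcd(n_1,\ldots,n_{r-1})$) to obtain a recurrence $\mathcal H(R_\Gamma;n) - \mathcal H(R_\Gamma;n-n_r) = G(n)$; this recurrence both propagates the ``all $n \ge 0$'' statement down from large $n$ and, after a telescoping sum over one period, extracts the leading coefficient $\tfrac{1}{(r-1)!\,n_1\cdots n_r}$ explicitly. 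Your route is more direct: you read the leading coefficient from the order-$r$ pole of $\prod_i(1-t^{n_i})^{-1}$ at $t=1$, and you get the ``all $n \ge 0$'' upgrade from the fact that the numerator of this Hilbert series is $1$ (so the partial-fraction expansion has no polynomial correction term). Your argument is shorter and leans on standard generating-function machinery; the paper's argument is more self-contained within the framework it has set up and avoids appealing to partial fractions or regularity of $R_\Gamma$. Both are fine; just be sure in your write-up to make the ``numerator equals $1$, hence no exceptional terms'' step explicit rather than hiding it behind the phrase ``Cohen--Macaulay criterion,'' since the latter requires a bit more unpacking to yield exactly the range $n \ge 0$.
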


\begin{proof}
Since $\dim R_\Gamma = r$, Proposition~\ref{p:factorhilbert} and Theorem~\ref{t:hilbert} imply $|\mathsf Z_\Gamma(n)| = f(n)$ for $n \gg 0$, where $f:\NN \to \NN$ is a quasipolynomial of degree $r-1$ with period dividing $\lcm(n_1, \ldots, n_r)$.  Let $a_0, \ldots, a_{r-1}:\NN \to \QQ$ denote periodic functions such that 
$$f(n) = a_{r-1}(n)n^{r-1} + \cdots + a_1(n)n + a_0(n)$$
for all $n \in \NN$.  

To prove that $|\mathsf Z_\Gamma(n)| = f(n)$ for all $n \ge 0$, we proceed by induction on $r$.  If $r = 1$, then $R_\Gamma = \kk[y_1]$, so $\mathcal H(R_\Gamma;n) = 1$ for all $n \ge 0$, which is clearly a quasipolynomial of the desired form.  
Now, suppose $r \ge 2$, let $c = \gcd(n_1, \ldots, n_{r-1})$, and let  $\Gamma' = \<n_1/c, \ldots, n_{r-1}/c\> \subset \Gamma$.  By induction, $\mathcal H(R_{\Gamma'};n)$ equals a quasipolynomial 
$$g(n) = \frac{c^{r-1}}{(r-2)! n_1 \cdots n_{r-1}}n^{r-2} + b_{r-3}(n)n^{r-3} + \cdots + b_1(n)n + b_0(n)$$
with period dividing $\lcm(n_1/c, \ldots, n_{r-1}/c)$, for all $n \ge 0$.  The sequence 
$$0 \longrightarrow R_\Gamma(-n_r) \xrightarrow{\cdot y_r} R_\Gamma \longrightarrow R_\Gamma/\<y_r\> \longrightarrow 0$$
is exact, and yields the equality 
$$\mathcal H(R_\Gamma;n) - \mathcal H(R_\Gamma;n - n_r) = \mathcal H(R_\Gamma/\<y_r\>;n) = 
\left\{\begin{array}{ll}
g(n/c) & c \mid n \\
0 & c \nmid n
\end{array}\right.$$
on Hilbert functions.  Let $G(n)$ denote the function on the right hand side in the above equality.  This means $f(n) - f(n - n_r) = G(n)$ for $n \gg 0$, but since $f$ is determined by finitely many values, this equality must hold for all $n \ge 0$.  Furthermore, $\mathcal H(R_\Gamma;n) = f(n)$ for all $n \ge 0$ since $\mathcal H(R_\Gamma;n) - \mathcal H(R_\Gamma;n - n_r) = G(n)$.  

Now, it remains to show that $a_{r-1}(n)$ has the desired form.  Since $G$ has degree strictly less than $r-1$, comparing coefficients yields the equalities $a_{r-1}(n) = a_{r-1}(n - n_r)$ and 
$$a_{r-2}(n) - (a_{r-2}(n - n_r) - (r-1)n_ra_{r-1}(n - n_r)) = 
\left\{\begin{array}{ll}
\frac{c}{(r-2)! n_1 \cdots n_{r-1}} & c \mid n \\
0 & c \nmid n
\end{array}\right.$$
for all $n$.  Let $\pi = \lcm(n_1, \ldots, n_r)$.  Since $\gcd(c,n_r) = 1$, we have 
$$\begin{array}{rcl}
\displaystyle\frac{\pi}{(r-2)! n_1 \cdots n_r} &=& \displaystyle\sum_{i = 1}^{\pi/n_r} a_{r-2}(n - (i-1)n_r) - (a_{r-2}(n - in_r) - (r-1)n_ra_{r-1}(n - in_r)) \\
&=& a_{r-2}(n) - a_{r-2}(n - \pi) + (r-1)\pi a_{r-1}(n),
\end{array}$$
and since $a_{r-2}$ is $\pi$-periodic, this yields the desired equality.  
\end{proof}

\begin{cor}\label{c:factornumerical}
Fix a numerical semigroup $\Gamma = \<n_1, \ldots, n_r\> \subset \NN$ and an element $n \in \Gamma$.  Resuming the notation from Theorem~\ref{t:factorasymp}, we have $r(n) = r$ and 
$$B(n) = n^{r-1}/(r-1)! n_1 \cdots n_r.$$
\end{cor}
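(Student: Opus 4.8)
The plan is to read off both claims from the explicit quasipolynomial for $|\mathsf Z_\Gamma(-)|$ supplied by Theorem~\ref{t:factornumerical}, combined with the description of the degree and leading coefficient of $|\mathsf Z_\Gamma(k\alpha)|$ in Theorem~\ref{t:factorasymp}. We may assume $n \ne 0$, the case $n = 0$ being degenerate.

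First I would apply Theorem~\ref{t:factornumerical}: there are periodic functions $a_0, \ldots, a_{r-2}:\NN \to \QQ$ with
$$|\mathsf Z_\Gamma(m)| = \frac{m^{r-1}}{(r-1)!\,n_1 \cdots n_r} + a_{r-2}(m)m^{r-2} + \cdots + a_1(m)m + a_0(m)$$
for all $m \ge 0$. Substituting $m = kn$ and regrouping by powers of $k$ yields
$$|\mathsf Z_\Gamma(kn)| = \frac{n^{r-1}}{(r-1)!\,n_1 \cdots n_r}\,k^{r-1} + \sum_{i=0}^{r-2} n^i a_i(kn)\,k^i.$$
For each $i$ the function $k \mapsto a_i(kn)$ is periodic in $k$ (with period dividing that of $a_i$), so the right-hand side exhibits $|\mathsf Z_\Gamma(kn)|$ as a quasipolynomial in $k$. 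Because $n \ge 1$, its top coefficient $n^{r-1}/(r-1)!\,n_1 \cdots n_r$ is a nonzero constant; hence this quasipolynomial has degree exactly $r-1$ and constant leading coefficient.

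Finally I would compare with Theorem~\ref{t:factorasymp}, which asserts that $|\mathsf Z_\Gamma(kn)|$ agrees for $k \gg 0$ with a quasipolynomial of degree $r(n)-1$ whose leading coefficient is the constant $B(n)$. Two quasipolynomials that agree for all large $k$ restrict to equal polynomials on each residue class modulo their common period, hence share the same degree and leading coefficient; this forces $r(n)-1 = r-1$, i.e.\ $r(n) = r$, and $B(n) = n^{r-1}/(r-1)!\,n_1 \cdots n_r$, as claimed. (If one prefers, $r(n) = r$ can be checked directly: for $L = \lcm(n_1, \ldots, n_r)$, the $r$ tuples carrying $Ln/n_i$ in coordinate $i$ and zeros elsewhere are linearly independent factorizations of the multiple $Ln$ of $n$, giving $r(n) \ge r$, while $r(n) \le r$ is automatic since all factorizations lie in $\NN^r$.) I expect no genuine obstacle here: the argument is a substitution followed by an appeal to two earlier theorems, and the only point requiring care is verifying that passing from $m$ to $kn$ preserves the quasipolynomial shape without dropping the degree, which is immediate from the preservation of periodicity and the positivity of the leading coefficient.
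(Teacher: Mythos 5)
Your proof is correct and takes the same route the paper intends: the paper explicitly frames Corollary~\ref{c:factornumerical} as the consequence of specializing Theorem~\ref{t:factornumerical}'s explicit quasipolynomial and matching it against Theorem~\ref{t:factorasymp}, which is exactly the substitution-and-comparison argument you carry out. Your parenthetical direct verification that $r(n) = r$, and the caveat excluding $n = 0$ (which the statement implicitly assumes, since otherwise $B(n) \in \QQ_{>0}$ would fail for $r > 1$), are both sound and worth having made explicit.
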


\begin{example}\label{e:factornumerical}
Consider the numerical semigroup $\Gamma = \<6,9,20\> \subset \NN$.  By Theorem~\ref{t:factornumerical}, there exist periodic functions $a_0, a_1:\NN \to \QQ$ such that
$$|\mathsf Z_\Gamma(n)| = \frac{1}{2160}n^2 + a_1(n)n + a_0(n)$$
for all $n \in \Gamma$.  Computing $|\mathsf Z_\Gamma(n)|$ for all $n \le 2 \cdot \lcm(6,9,20) = 360$ in \texttt{Sage} \cite{sage} shows the linear coefficient $a_1$ has period 6 and the constant coefficient $a_0$ has full period $180$.  
\end{example}

\begin{remark}\label{r:ehrhart}
The Hilbert function in Proposition~\ref{p:factorhilbert} is the only one constructed in this paper that is quasipolynomial for all $\alpha \in \Gamma$.  Algebraically, this is because the start of quasipolynomial behavior of a Hilbert function is controlled by the algebraic relations (and higher syzygies) of the underlying module, and the polynomial ring has no relations between its generators.  On the other hand, each graded module $M$ constructed throughout the rest of the paper has some nontrivial algebraic relations (or are defined over a $\kk$-algebra with nontrivial relations).  For an interesting development on which general conditions enable a function to be eventually quasipolynomial, see~\cite{presburgerarith}.  

The absense of an ``$n \gg 0$'' assumption in Theorem~\ref{t:factornumerical} can also be interpreted geometrically.  In particular, when $\Gamma$ is a numerical semigroup, the function $\mathsf Z_\Gamma$ coincides with the Ehrhart function of a rational simplex, which is quasipolynomial by Ehrhart's theorem \cite{continuousdiscretely}.  The algebraic relations found in many of the modules constructed later in this paper can be viewed as inducing an equivalence relation on the lattice points in dilations of this simplex, and the corresponding Hilbert function counts equivalence classes.  The interested reader is encouraged to consult \cite[Chapter~12]{cca} for details on the connection between Hilbert functions and Ehrhart functions.  
\end{remark}

\section{The delta set}\label{s:delta}

In this section, we consider the delta set invariant (Definition~\ref{d:deltaset}), which measures the ``gaps'' in a semigroup element's factorization lengths.  The main result is Theorem~\ref{t:deltahilbert}, in which we construct an ascending chain of ideals in the ring $R_\Gamma$ of factorizations of a semigroup $\Gamma \subset A$ (Definition~\ref{d:factring}) such that the Hilbert functions of successive quotients in this chain determine the delta sets of the elements of $\Gamma$.  Applying Theorem~\ref{t:hilbert2} to Theorem~\ref{t:deltahilbert} yields a classification of the delta set for all such semigroups $\Gamma$ (Corollary~\ref{c:deltahilbert}).  Furthermore, applying Theorem~\ref{t:hilbert} to the special case of Theorem~\ref{t:deltahilbert} where $\Gamma$ is a numerical semigroup yields Corollary~\ref{c:deltanumerical}, a recent result appearing as \cite[Corollary~18]{compasympdelta} as an improvement on \cite[Theorem~1]{deltaperiodic}.   Theorem~\ref{t:deltahilbert} also has computational applications; see Remark~\ref{r:deltaalgorithm}.  


\begin{defn}\label{d:deltaset}
Fix $\alpha \in \Gamma = \<\alpha_1, \ldots, \alpha_r\> \subset A$.  
Given $\aa \in \mathsf Z_\Gamma(\alpha)$, the \emph{length of $\aa$} is the number $|\aa| = a_1 + \cdots + a_r$ of irreducibles in $\aa$.  The \emph{length set of $\alpha$} is the set
$$\mathsf L_\Gamma(\alpha) = \{a_1 + a_2 + \cdots + a_r : \aa \in \mathsf Z_\Gamma(\alpha)\}$$
of factorization lengths.  Writing $\mathsf L_\Gamma(\alpha) = \{\ell_1 < \cdots < \ell_m\}$, the \emph{delta set of $\alpha$} is the set
$$\Delta(\alpha) = \{\ell_{i+1} - \ell_i : 1 \le i < m\}$$
of successive differences of factorization lengths.  The \emph{delta set of $\Gamma$} is $\Delta(\Gamma) = \bigcup_{\alpha \in \Gamma} \Delta(\alpha)$.  
We say $\Gamma$ is \emph{half-factorial} if $|\mathsf L_\Gamma(\alpha)| = 1$ for all $\alpha \in \Gamma$.  
\end{defn}

\begin{defn}\label{d:lenideal}
Suppose $\Gamma = \<\alpha_1, \ldots, \alpha_r\> \subset A$.  
The \emph{length set ideal of $\Gamma$} is 
$$I_\mathsf L = \<\yy^\aa - \yy^\bb : \aa, \bb \in \mathsf Z_\Gamma(\alpha) \text{ for some } \alpha \in \Gamma \text{ and } |\aa| = |\bb|\> \subset R_\Gamma,$$
a homogeneous ideal in the ring of factorizations $R_\Gamma$ of $\Gamma$.  
\end{defn}

\begin{remark}\label{r:halffactorial}
The ``half-factorial'' assumption in Proposition~\ref{p:lenhilbert} and Theorem~\ref{t:lenasymp} is necessary, as otherwise $|\mathsf L_\Gamma(\alpha)| = 1$ for all nonzero $\alpha \in \Gamma$, which is (quasi)constant.  
\end{remark}

\begin{prop}\label{p:lenhilbert}
Suppose $\Gamma = \<\alpha_1, \ldots, \alpha_r\> \subset A$.  The equality 
$$\mathcal H(R_\Gamma/I_\mathsf L;\alpha) = |\mathsf L_\Gamma(\alpha)|$$
holds for all $\alpha \in \Gamma$.  In particular, the function $\Gamma \to \NN$ given by $\alpha \mapsto |\mathsf L_\Gamma(\alpha)|$ is eventually quasilinear if $\Gamma$ is not half-factorial.  
\end{prop}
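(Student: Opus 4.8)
The plan is to establish a degree-preserving bijection between a $\kk$-basis of $(R_\Gamma/I_\mathsf L)_\alpha$ and the length set $\mathsf L_\Gamma(\alpha)$, mirroring the structure of the proof of Proposition~\ref{p:factorhilbert}. First I would observe that $R_\Gamma$ is spanned as a $\kk$-vector space by the monomials $\yy^\aa$, with $\yy^\aa$ lying in degree $\alpha = a_1\alpha_1 + \cdots + a_r\alpha_r$. Since $I_\mathsf L$ is generated by homogeneous binomials $\yy^\aa - \yy^\bb$ with $\aa,\bb$ factorizations of the same element and of equal length, passing to the quotient $R_\Gamma/I_\mathsf L$ identifies two monomials $\yy^\aa$ and $\yy^\bb$ exactly when they have the same degree in $A$ \emph{and} the same total length $|\aa| = |\bb|$. (One must check that the equivalence relation on monomials generated by the defining binomials of $I_\mathsf L$ is precisely ``same $A$-degree and same length''; the nontrivial direction is that no further collapsing occurs, which follows because each generator preserves both the $A$-grading and the $\ZZ$-grading by total degree, so these are invariants of the ideal.)

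Next, fixing $\alpha \in \Gamma$, the images in $(R_\Gamma/I_\mathsf L)_\alpha$ of the monomials of degree $\alpha$ thus fall into equivalence classes indexed by their length, and these classes form a $\kk$-basis of $(R_\Gamma/I_\mathsf L)_\alpha$: distinct classes remain $\kk$-linearly independent because $I_\mathsf L$, being spanned by binomial differences of monomials of equal length, contains no element expressing a monomial of one length as a combination of monomials of other lengths. Hence the number of such classes is exactly $|\{\,|\aa| : \aa \in \mathsf Z_\Gamma(\alpha)\,\}| = |\mathsf L_\Gamma(\alpha)|$, giving $\mathcal H(R_\Gamma/I_\mathsf L;\alpha) = |\mathsf L_\Gamma(\alpha)|$ for all $\alpha \in \Gamma$ (and both sides are $0$ for $\alpha \in A \setminus \Gamma$).

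For the second claim, apply Theorem~\ref{t:hilbert2}: since $R_\Gamma/I_\mathsf L$ is a finitely generated, modestly $A$-graded module over the $A$-graded $\kk$-algebra $R_\Gamma$, its Hilbert function is eventually quasipolynomial of cumulative degree $\dim(R_\Gamma/I_\mathsf L)$. It remains to show this cumulative degree is at most $1$ when $\Gamma$ is not half-factorial, i.e.\ that $\dim(R_\Gamma/I_\mathsf L) \le 1$ — equivalently, that $R_\Gamma/I_\mathsf L$ has Krull dimension at most $1$, equivalently that $|\mathsf L_\Gamma(\alpha)|$ grows at most linearly along any ray. The key point is that the length set $\mathsf L_\Gamma(\alpha)$ is a set of integers all lying between $\min \mathsf L_\Gamma(\alpha)$ and $\max \mathsf L_\Gamma(\alpha)$, and both of these are functions that grow at most linearly in $\alpha$ (each is the optimum of a linear program depending on $\alpha$, hence piecewise linear), so $|\mathsf L_\Gamma(\alpha)| \le \max \mathsf L_\Gamma(\alpha) - \min \mathsf L_\Gamma(\alpha) + 1$ is $O(\|\alpha\|)$; combined with the eventual quasipolynomiality this forces cumulative degree at most $1$, i.e.\ eventual quasilinearity. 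The non-half-factorial hypothesis guarantees the function is not eventually quasiconstant, so "quasilinear" is the sharp description (per Remark~\ref{r:halffactorial}).

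The main obstacle I anticipate is the verification that the quotient $R_\Gamma/I_\mathsf L$ collapses monomials \emph{exactly} along the "same $A$-degree, same length" relation and no more — that is, arguing cleanly that the binomial ideal $I_\mathsf L$, though not obviously a lattice ideal in the usual sense, still produces a quotient with a monomial basis indexed by pairs (degree, length). The clean way to handle this is to introduce the auxiliary $(A \oplus \ZZ)$-grading on $R_\Gamma$ assigning $y_i$ the degree $(\alpha_i, 1)$; then $I_\mathsf L$ is homogeneous for this finer grading, each graded piece $R_\Gamma$ in degree $(\alpha, \ell)$ is spanned by the monomials $\yy^\aa$ with $\aa \in \mathsf Z_\Gamma(\alpha)$ and $|\aa| = \ell$, and within that finite-dimensional space $I_\mathsf L$ contains the span of all differences $\yy^\aa - \yy^\bb$, so the quotient piece is $1$-dimensional exactly when that set of monomials is nonempty. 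Summing over $\ell$ recovers the count $|\mathsf L_\Gamma(\alpha)|$ directly, bypassing any delicate combinatorial chain-of-binomials argument.
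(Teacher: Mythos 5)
Your proof of the Hilbert-function equality is correct and in fact tighter than the paper's one-line argument: the auxiliary $(A \oplus \ZZ)$-grading with $\deg(y_i) = (\alpha_i, 1)$, under which $I_\mathsf L$ is homogeneous and each bihomogeneous piece of $R_\Gamma/I_\mathsf L$ is spanned by the common image of the monomials of that degree and length, makes the ``no further collapsing'' point rigorous in exactly the way the paper glosses over. Where you diverge is in the quasilinearity argument. The paper identifies $I_\mathsf L$ structurally: it is precisely the kernel of the monomial map $y_i \mapsto \xx^{\alpha_i} z$, hence a toric (prime) ideal, and so $\dim(R_\Gamma/I_\mathsf L) = \dim\spann_\QQ\{(\alpha_i, 1) : i\}$, which is $\dim\spann_\QQ(\Gamma) + 1$ exactly when the projection $(\alpha_i,1) \mapsto \alpha_i$ fails to be injective on the span --- equivalently, when $\Gamma$ is not half-factorial. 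This dimension count gives quasilinearity directly via Theorem~\ref{t:hilbert2}, and the toric description of $I_\mathsf L$ is also implicitly useful later (Corollary~\ref{c:lenleadingcoeff}). Your route instead sandwiches $|\mathsf L_\Gamma(\alpha)| \le \mathsf M_\Gamma(\alpha) - \mathsf m_\Gamma(\alpha) + 1$ and observes both sides grow at most linearly; combined with the already-known quasipolynomiality, this forces the restricting polynomials to be linear. That is a legitimate and more elementary route, but two small points need tightening. First, the phrase ``cumulative degree at most~$1$'' is wrong terminology: the cumulative degree of $\mathcal H(R_\Gamma/I_\mathsf L;-)$ equals $\dim(R_\Gamma/I_\mathsf L)$, which is $\dim\spann_\QQ(\Gamma) + 1 \ge 2$ in the non-half-factorial case; what your growth bound controls is the degree of the polynomial restricted to each cone, which is what ``quasilinear'' refers to here. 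Second, concluding the degree is \emph{exactly} $1$ requires that $|\mathsf L_\Gamma(-)|$ be unbounded, not merely non-constant; this follows from non-half-factoriality by noting that if $|\mathsf L_\Gamma(\alpha_0)| \ge 2$ then $|\mathsf L_\Gamma(k\alpha_0)| \ge k+1$ (mixing the two lengths), so you should say this rather than citing Remark~\ref{r:halffactorial}, which only addresses the converse direction.
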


\begin{proof}
By Proposition~\ref{p:factorhilbert}, the monomials $\yy^\aa$ of $R_\Gamma$ of degree $\alpha$ are in bijection with the factorizations of $\alpha$.  The quotient by $I_\mathsf L$ is graded since $I_\mathsf L$ is homogeneous, and two monomials $\yy^\aa$ and $\yy^\bb$ of the same degree have the same image modulo $I_\mathsf L$ precisely when their factorization lengths coincide.  Thus, modulo $I_\mathsf L$, the monomials of degree~$\alpha$ are in bijection with the set $\mathsf L(\alpha)$, so $\mathcal H(R_\Gamma/I_\mathsf L;\alpha) = |\mathsf L_\Gamma(\alpha)|$, which by Theorem~\ref{t:hilbert2} is eventually quasipolynomial of cumulative degree $\dim R_\Gamma/I_\mathsf L$.  

It remains to show that $|\mathsf L_\Gamma(-)|$ is eventually quasilinear when $\Gamma$ is not half-factorial.  
First, assume $\Gamma \subset \NN^d$ is affine.  The ideal $I_\mathsf L$ is the kernel of the monomial map $\kk[y_1, \ldots, y_r] \to \kk[x_1, \ldots, x_d, z]$ sending 
$y_i \mapsto \xx^{\alpha_i}z$, 
since two monomials $\yy^\aa$ and $\yy^\bb$ have the same image precisely when $\aa$ and $\bb$ are equal-length factorizations of the same element of $\Gamma$.  This means 
$$\dim R_\Gamma/I_\mathsf L = \dim\spann_\QQ\{(\alpha_i,1) \in \NN^{d+1} : 1 \le i \le r\},$$
which can only be $\dim\spann_\QQ(\Gamma)$ or $\dim\spann_\QQ(\Gamma) + 1$ since projecting along the last coordinate yields $\spann_\QQ(\Gamma) = \dim R_\Gamma$.  By assumption, $\Gamma$ is not half-factorial, so this projection is not injective, and $\dim R_\Gamma/I_\mathsf L = \dim\spann_\QQ(\Gamma) + 1$.  It follows that $|\mathsf L_\Gamma(-)|$ has eventual degree 1.  

Lastly, suppose $\Gamma \subset A$ is not necessarily affine.  The image $\rho(\Gamma)$ under the projection map $\rho:A \to \NN^d$ is affine, and the image of any factorization of $\alpha \in \Gamma$ is a factorization for $\rho(\alpha) \in \rho(\Gamma)$.  As such, $|\mathsf L_\Gamma(\alpha)| \le |\mathsf L_{\rho(\Gamma)}(\rho(\alpha))|$ for all $\alpha \in \Gamma$, so by the above argument $|\mathsf L_\Gamma(-)|$ has eventual degree at most 1.  Again, $\Gamma$ is not half-factorial, so equality must hold.  
\end{proof}

As a consequence of the bijection established in Proposition~\ref{p:lenhilbert}, we obtain Theorem~\ref{t:lenasymp}, an asymptotic characterization of the cardinality of semigroup element length sets, which also follows as a consequence of \cite[Theorem~4.9.2]{nonuniq}.  

\begin{thm}\label{t:lenasymp}
Suppose $\Gamma = \<\alpha_1, \ldots, \alpha_r\> \subset A$ is not half-factorial, and fix $\alpha \in \Gamma$.  
There exists a positive constant $B(\alpha)$ and a periodic function $a_0$ such that 
$$|\mathsf L_\Gamma(k\alpha)| = B(\alpha)k + a_0(k)$$
for $k \gg 0$.  
\end{thm}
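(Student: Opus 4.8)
The plan is to bootstrap from Proposition~\ref{p:lenhilbert} by passing to a one-parameter restriction of the Hilbert function and applying the $\NN$-graded Hilbert theorem (Theorem~\ref{t:hilbert}) together with the Bruns–Ichim refinement (Theorem~\ref{t:constcoeffs}). Concretely, first I would fix $\alpha \in \Gamma$ and consider the subring $S_\alpha = \kk[\yy^\aa : \aa \in \mathsf Z_\Gamma(k\alpha),\, k \ge 0] \subset R_\Gamma$, exactly as in the proof of Theorem~\ref{t:factorasymp}, $\NN$-graded by assigning $\deg(\yy^\aa) = k$ when $\aa \in \mathsf Z_\Gamma(k\alpha)$. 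Let $J_\alpha = I_\mathsf L \cap S_\alpha$ be the restriction of the length set ideal, which is homogeneous for this $\NN$-grading since $I_\mathsf L$ is generated by differences of equal-length monomials lying in a common graded piece. Then by the bijection in Proposition~\ref{p:lenhilbert} (restricted to degrees of the form $k\alpha$), we have $\mathcal H(S_\alpha/J_\alpha;k) = |\mathsf L_\Gamma(k\alpha)|$ for all $k \ge 0$.

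Next I would identify the dimension of $S_\alpha/J_\alpha$. By the argument in Proposition~\ref{p:lenhilbert}, $S_\alpha/J_\alpha$ is (isomorphic to) the coordinate ring generated by the monomials $\xx^{\rho(\beta)} z^k$ over pairs $(\beta,k)$ with $\beta$ a length-$k$ attainable multiple of $\alpha$; equivalently $\dim S_\alpha/J_\alpha = \dim_\QQ \spann_\QQ\{(\ell,1) : \ell \in \mathsf L_\Gamma(k\alpha),\, k \ge 0\}$, where I first project $\Gamma$ along $\rho$ (harmless for length sets, as in the last paragraph of the proof of Proposition~\ref{p:lenhilbert}). Since $\Gamma$ is not half-factorial, some $|\mathsf L_\Gamma(k\alpha)| \ge 2$, so this span is $2$-dimensional, i.e.\ $\dim S_\alpha/J_\alpha = 2$. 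Applying Theorem~\ref{t:hilbert} to the finitely generated graded $S_\alpha$-module $S_\alpha/J_\alpha$ of dimension $2$ gives periodic functions $a_1, a_0$ with $a_1 \not\equiv 0$ and $|\mathsf L_\Gamma(k\alpha)| = a_1(k)k + a_0(k)$ for $k \gg 0$. Finally, to upgrade $a_1$ to a genuine positive constant $B(\alpha)$, I would invoke Theorem~\ref{t:constcoeffs}: the ring $S_\alpha$ has a generator in $\NN$-degree $1$ (namely any $\yy^\aa$ with $\aa \in \mathsf Z_\Gamma(\alpha)$, which is nonempty), so the ideal $I$ of Theorem~\ref{t:constcoeffs} contains an element whose degree is coprime to the period $\pi$; hence $\dim(S_\alpha/J_\alpha)/I(S_\alpha/J_\alpha) \le 1$, forcing the coefficient $a_1$ to be constant. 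Positivity of $B(\alpha) = a_1$ then follows since $|\mathsf L_\Gamma(k\alpha)| \to \infty$ (the degree is exactly $1$, not $0$).

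The main obstacle I anticipate is the bookkeeping in the dimension computation: one must be careful that $J_\alpha = I_\mathsf L \cap S_\alpha$ is the \emph{kernel} of the restricted monomial map (so that the quotient really is the desired toric-type ring) rather than merely being contained in it, and that the $\NN$-grading by $k$ is well-defined, i.e.\ that no monomial of $S_\alpha$ is simultaneously a factorization of $k\alpha$ and $k'\alpha$ for $k \ne k'$ — this last point uses that $\Gamma$ is reduced, so $\alpha \ne 0$ and the multiples $k\alpha$ are distinct. Once these compatibility checks are in place, the rest is a direct application of the two graded Hilbert theorems already recorded in Section~\ref{s:hilbert}. An alternative, slightly softer route avoiding $S_\alpha$ entirely is to note that $k \mapsto |\mathsf L_\Gamma(k\alpha)|$ is the restriction of the eventually quasilinear function $|\mathsf L_\Gamma(-)|$ from Proposition~\ref{p:lenhilbert} along the ray $\NN\alpha$; restricting an eventually quasipolynomial function of cumulative degree determined by a $2$-dimensional module to a line through a generating direction yields an eventual quasilinear function in $k$, and the leading-coefficient-constancy again comes from Theorem~\ref{t:constcoeffs} applied to the line restriction — but the subring approach gives the cleanest access to Theorem~\ref{t:constcoeffs}, so I would present that one.
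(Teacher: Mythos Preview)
Your approach is essentially identical to the paper's: you form the same $\NN$-graded subring $S_\alpha = \kk[\yy^\aa : \aa \in \mathsf Z_\Gamma(k\alpha),\, k \ge 0]$ (the paper calls it $R$), intersect with $I_\mathsf L$, identify $\mathcal H(S_\alpha/J_\alpha;k)$ with $|\mathsf L_\Gamma(k\alpha)|$ via Proposition~\ref{p:lenhilbert}, and then invoke Theorems~\ref{t:hilbert} and~\ref{t:constcoeffs} using the degree-$1$ generator coming from $\mathsf Z_\Gamma(\alpha) \ne \emptyset$. The paper's write-up is terser---it does not spell out the dimension computation or the well-definedness of the $\NN$-grading as you do---but the argument is the same.
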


\begin{proof}
As in the proof of Theorem~\ref{t:factorasymp}, consider the subring 
$$R = \kk[\yy^\aa : \aa \in \mathsf Z_\Gamma(k\alpha), k \ge 0] \subset R_\Gamma$$
whose monomials correspond to factorizations of $k\alpha$ for some $k \ge 0$ under the bijection established in Proposition~\ref{p:factorhilbert}, and whose grading is given by $\deg(\yy^\aa) = k$ for each $\aa \in \mathsf Z(k\alpha)$.  Letting $I = I_\mathsf L \cap R$, Proposition~\ref{p:lenhilbert} ensures that $|\mathsf L(k\alpha)| = \mathcal H(R/I;k)$ is eventually quasilinear in $k$.  Moreover, $R/I$ has at least one monomial of degree~1 since $\mathsf Z(\alpha)$ is nonempty, so Theorem~\ref{t:constcoeffs} ensures the existence of $B(\alpha)$.  
\end{proof}

Theorem~\ref{t:lennumerical} is the special case of Proposition~\ref{p:lenhilbert} for numerical semigroups.  

\begin{thm}\label{t:lennumerical}
Fix a numerical semigroup $\Gamma = \<n_1, \ldots, n_r\>$.  There exists a periodic function $a_0:\NN \to \QQ$ whose period divides $\lcm(n_1,n_r)$ and a constant $a_1$ such that 
$$|\mathsf L_\Gamma(n)| = a_1n + a_0(n)$$
for $n \gg 0$.  
\end{thm}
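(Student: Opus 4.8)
The plan is to apply Hilbert's theorem (Theorem~\ref{t:hilbert}) to the $\NN$-graded $\kk$-algebra $R_\Gamma/I_\mathsf L$, whose Hilbert function is $|\mathsf L_\Gamma(-)|$ by Proposition~\ref{p:lenhilbert}, being careful to select a homogeneous system of parameters whose degrees lie in $\{n_1, n_r\}$; this is what forces the period to divide $\lcm(n_1, n_r)$ rather than merely $\lcm(n_1, \ldots, n_r)$. If $\Gamma = \NN$ then $|\mathsf L_\Gamma(n)| = 1$ for all $n$ and there is nothing to prove, so assume $r \ge 2$. Then $\Gamma$ is not half-factorial (the relation $n_r\alpha_1 = n_1\alpha_r$ gives factorizations of $n_1n_r$ of distinct lengths $n_1 \ne n_r$), so by Proposition~\ref{p:lenhilbert} and its proof, $\mathcal H(R_\Gamma/I_\mathsf L; n) = |\mathsf L_\Gamma(n)|$ and $\dim R_\Gamma/I_\mathsf L = \dim\spann_\QQ(\Gamma) + 1 = 2$.

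The crucial step is to check that $y_1$ and $y_r$ descend to a homogeneous system of parameters of $R_\Gamma/I_\mathsf L$. Since this algebra has dimension $2$, it is enough to show that $R_\Gamma/(I_\mathsf L + \<y_1, y_r\>)$ is Artinian, for which it suffices to produce, for each $i$, a power of $y_i$ lying in $I_\mathsf L + \<y_1, y_r\>$. The mechanism is an equal-length ``trade'': the element $(n_r - n_1)n_i \in \Gamma$ admits both the factorization into $n_r - n_1$ copies of $\alpha_i$ and the factorization into $n_r - n_i$ copies of $\alpha_1$ together with $n_i - n_1$ copies of $\alpha_r$, and a direct check shows both have length $n_r - n_1$. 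Hence $y_i^{\,n_r-n_1} - y_1^{\,n_r-n_i}y_r^{\,n_i-n_1} \in I_\mathsf L$; as the monomial $y_1^{\,n_r-n_i}y_r^{\,n_i-n_1}$ is divisible by $y_1$ when $i = 1$ and by $y_r$ when $i \ge 2$, we get $y_i^{\,n_r-n_1} \in I_\mathsf L + \<y_1, y_r\>$ for every $i$. Thus $R_\Gamma/(I_\mathsf L + \<y_1, y_r\>)$ is Artinian, so $\{y_1, y_r\}$ is a homogeneous system of parameters, and Theorem~\ref{t:hilbert} provides periodic $a_0, a_1 : \NN \to \QQ$, each of period dividing $\lcm(\deg y_1, \deg y_r) = \lcm(n_1, n_r)$, with $|\mathsf L_\Gamma(n)| = a_1(n)n + a_0(n)$ for $n \gg 0$.

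To see $a_1$ is constant I would argue along a generic ray using Theorem~\ref{t:lenasymp}. Since $\Gamma$ is numerical it contains some $\alpha$ with $\gcd(\alpha, \lcm(n_1, n_r)) = 1$, and Theorem~\ref{t:lenasymp} gives a positive constant $B(\alpha)$ with $|\mathsf L_\Gamma(k\alpha)| = B(\alpha)k + O(1)$ for $k \gg 0$. Substituting $n = k\alpha$ into the formula of the previous paragraph gives $|\mathsf L_\Gamma(k\alpha)| = \alpha\,a_1(k\alpha)\,k + a_0(k\alpha)$ for $k \gg 0$; since two quasilinear functions of $k$ that agree for all large $k$ must have the same periodic linear coefficient, this forces $\alpha\,a_1(k\alpha) = B(\alpha)$ for all $k$. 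As $\alpha$ is invertible modulo $\lcm(n_1, n_r)$, the products $k\alpha$ realize every residue class modulo $\lcm(n_1, n_r)$, and since $a_1$ has period dividing $\lcm(n_1, n_r)$ we conclude $a_1 \equiv B(\alpha)/\alpha$, a constant, completing the proof.

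The main obstacle is the system-of-parameters step: the period bound $\lcm(n_1, n_r)$ is sharper than what a generic parameter system would yield, and obtaining it requires the equal-length trade between $\alpha_i$ and the extreme atoms $\alpha_1, \alpha_r$. A secondary point is that one cannot deduce the constancy of $a_1$ from Theorem~\ref{t:constcoeffs} when $\gcd(n_1, n_r) > 1$, since the leading coefficient of the abstract Hilbert quasipolynomial of an auxiliary module can oscillate in that case; this is why the ray argument via Theorem~\ref{t:lenasymp} is needed.
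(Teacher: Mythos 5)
Your proof is correct and ends up with the same system of parameters $(y_1, y_r)$ as the paper, but you make the needed nilpotence explicit: the binomial $y_i^{\,n_r-n_1} - y_1^{\,n_r-n_i}y_r^{\,n_i-n_1}$ from the equal-length trade on $(n_r - n_1)n_i$ is a clean, verifiable witness that $R_\Gamma/(I_\mathsf L + \<y_1,y_r\>)$ is Artinian, whereas the paper merely asserts ``some power of each $y_i$ has zero image.'' Where you genuinely diverge is the constancy of $a_1$. The paper splits into cases on $\gcd(n_1, n_r)$ and invokes Theorem~\ref{t:constcoeffs} after exhibiting a homogeneous element of degree coprime to $\lcm(n_1,n_r)$; you instead run a ray argument, taking $\alpha \in \Gamma$ coprime to $\lcm(n_1, n_r)$, comparing the two quasilinear expressions for $|\mathsf L_\Gamma(k\alpha)|$ supplied by Theorem~\ref{t:lenasymp} and by the already-established periodicity, and matching leading coefficients. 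Your route is in fact the more robust of the two: the paper's claim for the case $\gcd(n_1, n_r) > 1$ --- that some $n_i$ is coprime to $\lcm(n_1, n_r)$ --- fails, e.g., for $\Gamma = \<6,10,15\>$, where $\gcd(n_1,n_r) = 3$ but every generator shares a factor with $\lcm(6,15) = 30$. Your argument via Theorem~\ref{t:lenasymp} sidesteps this entirely, at the modest cost of needing to observe that $k \mapsto a_1(k\alpha)$ is periodic with the full period of $a_1$ (true since $\gcd(\alpha, \lcm(n_1,n_r)) = 1$), so that equality of leading coefficients as functions of $k$ forces $a_1$ constant on every residue class.
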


\begin{proof}
Applying Proposition~\ref{p:lenhilbert} and Theorem~\ref{t:hilbert} proves $|\mathsf L_\Gamma(n)|$ is eventually quasilinear.  Fix periodic functions $a_0, a_1:\NN \to \QQ$ such that 
$$|\mathsf L_\Gamma(n)| = a_1(n)n + a_0(n)$$
for $n \gg 0$, let $f(n) = a_1(n)n + a_0(n)$, and let $\pi$ denote the period of $f$.  

First, we claim $(y_1, y_r)$ is a homogeneous system of parameters for $M = R_\Gamma/I_\mathsf L$, from which we conclude $\pi \mid \lcm(n_1, n_r)$ by Theorem~\ref{t:hilbert}.  Indeed, since $\Gamma$ is cancellative, $y_1$ is a nonzerodivisor on $M$.  Moreover, for any $k \ge 0$, $y_r^k \in M$ has nonzero image modulo $y_1M$ since $k\mathbf e_r \in \mathsf Z_\Gamma(kn_r)$ is the unique factorization of $kn_r$ of length $k$.  Observing that some power of each $y_i$ has zero image in $M/\<y_1, y_r\>M$ proves the claim.  

It remains to prove that $a_1$ is constant.  If $\gcd(n_1,n_r) > 1$, then some $y_i$ has degree relatively prime to $\lcm(n_1,n_r)$.  On the other hand, if $\gcd(n_1, n_r) = 1$, then $y_1y_r$ has degree $n_1 + n_r$, and $\gcd(n_1 + n_r, n_1n_r) = \gcd(n_1,(n_1-1)n_r) = 1$.  In either case, Theorem~\ref{t:constcoeffs} completes the proof.  
\end{proof}

\begin{remark}\label{r:lenleadingcoeff}
An explicit formula for the leading coefficient of the quasilinear function in Theorem~\ref{t:lennumerical} is given in Corollary~\ref{c:lenleadingcoeff}, as the proof relies on several upcoming results.  If we wanted, we could appeal to existing results on length sets (see, for instance, \cite[Chapter~4]{nonuniq}), but our chosen proof demonstrates how the algebro-combinatorial framework presented in this paper can be used to discern many of these same results.  
\end{remark}

\begin{example}\label{e:lennumerical}
Let $\Gamma = \<6,9,20\> \subset \NN$.  The length set ideal of $\Gamma$ is given by 
$$I_\mathsf L = \<x^{11}z^3 - y^{14}\> \subset R_\Gamma = \kk[x,y,z]$$
where $\deg(x) = 6$, $\deg(y) = 9$ and $\deg(z) = 20$.  The degree of both monomials in the generator of $I_\mathsf L$ is 126, which is the smallest element of $\Gamma$ with two distinct factorizations of equal length.  Moreover, there exists a function $a_0:\NN \to \QQ$ with period 60 such that
$$\mathsf L(n) = \textstyle\frac{7}{60}n + a_0(n)$$
for all $n \ge 92$.  Note that this bound is sharp, as the quasilinear function above does not coincide with $\mathsf L(n)$ for $n = 91$; this can be verified by a simple computation.  
\end{example}



We are now ready to state and prove Theorem~\ref{t:deltahilbert}, which implies that the set of elements of a semigroup $\Gamma \subset A$ having a given value in their delta set equals the support of an eventually quasipolynomial function.  Applying Theorem~\ref{t:affineequiv} immediately yields Corollary~\ref{c:deltahilbert}, which gives a more explicit description of this set.  

\begin{thm}\label{t:deltahilbert}
Suppose $\Gamma = \<\alpha_1, \ldots, \alpha_k\> \subset A$.  
The ideals
$$I_j = \<\yy^\aa - \yy^\bb : \aa, \bb \in \mathsf Z_\Gamma(\alpha), \alpha \in \Gamma, \text{and } \big||\bb| - |\aa|\big| \le j\> \subset R_\Gamma$$
for $j \ge 0$ form an ascending chain 
$$I_\mathsf L = I_0 \subset I_1 \subset I_2 \subset \cdots$$
in which $\mathcal H(I_j/I_{j-1};\alpha)$ counts the number of successive length differences in $\mathsf L(\alpha)$ equal to~$j$ whenever $j \ge 1$.  In particular, $I_{j-1} \subsetneq I_{j}$ if and only of $j \in \Delta(\Gamma)$.  
\end{thm}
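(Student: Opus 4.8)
The plan is to establish the chain structure first, then identify the Hilbert function of each successive quotient, and finally read off the $\Delta(\Gamma)$ statement as a corollary of that identification. For the chain, observe that if $\big||\bb| - |\aa|\big| \le j-1$ then certainly $\big||\bb| - |\aa|\big| \le j$, so every generator of $I_{j-1}$ is a generator of $I_j$; this gives $I_{j-1} \subseteq I_j$ immediately, and $I_\mathsf L = I_0$ is just the definition (the condition $\big||\bb|-|\aa|\big| \le 0$ is $|\aa| = |\bb|$). Each $I_j$ is homogeneous since its generators are differences of monomials of equal degree $\alpha \in \Gamma$, so all the quotients $I_j/I_{j-1}$ are $A$-graded $R_\Gamma$-modules and their Hilbert functions make sense.

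The heart of the argument is computing $\mathcal H(I_j/I_{j-1};\alpha)$ for $j \ge 1$. I would fix $\alpha \in \Gamma$ and work inside the finite-dimensional graded piece $(R_\Gamma)_\alpha$, which by Proposition~\ref{p:factorhilbert} has basis the monomials $\yy^\aa$ for $\aa \in \mathsf Z_\Gamma(\alpha)$. The key observation is that, in degree $\alpha$, the vector space $(I_j)_\alpha$ is spanned by the differences $\yy^\aa - \yy^\bb$ with $\aa,\bb \in \mathsf Z_\Gamma(\alpha)$ and $\big||\aa|-|\bb|\big| \le j$ --- this requires checking that multiplying a lower-degree relation generator by a monomial still lands among the listed degree-$\alpha$ generators, which holds because both the relation generator's degree and the resulting degree lie in $\Gamma$ and length is additive under multiplication by monomials. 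Consequently $(R_\Gamma/I_j)_\alpha$ has dimension equal to the number of equivalence classes of $\mathsf Z_\Gamma(\alpha)$ under the relation ``$\aa \sim \bb$ if there is a chain $\aa = \cc_0, \cc_1, \ldots, \cc_m = \bb$ of factorizations of $\alpha$ with consecutive length differences at most $j$'' --- but since the length set $\mathsf L_\Gamma(\alpha) = \{\ell_1 < \cdots < \ell_m\}$ is totally ordered, this is exactly the number of ``blocks'' of consecutive $\ell_i$'s whose gaps $\ell_{i+1}-\ell_i$ are all $\le j$. Writing $N_j(\alpha)$ for that block count, we get $\mathcal H(R_\Gamma/I_j;\alpha) = N_j(\alpha)$, and since $N_j(\alpha) - N_{j-1}(\alpha)$ counts precisely the gaps $\ell_{i+1}-\ell_i$ that equal $j$ (each such gap splits one $(j-1)$-block into two), the short exact sequence $0 \to I_j/I_{j-1} \to R_\Gamma/I_{j-1} \to R_\Gamma/I_j \to 0$ gives $\mathcal H(I_j/I_{j-1};\alpha) = N_{j-1}(\alpha) - N_j(\alpha)$... wait, the sequence gives $\mathcal H(R_\Gamma/I_{j-1};\alpha) = \mathcal H(I_j/I_{j-1};\alpha) + \mathcal H(R_\Gamma/I_j;\alpha)$, so $\mathcal H(I_j/I_{j-1};\alpha) = N_{j-1}(\alpha) - N_j(\alpha)$, which is the number of successive length differences equal to $j$ (a negative sign bookkeeping check I would double-check: more identifications means fewer classes, so $N_j \le N_{j-1}$, consistent).

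The final claim follows formally: $I_{j-1} \subsetneq I_j$ iff $\mathcal H(I_j/I_{j-1};\alpha) > 0$ for some $\alpha \in A$, and by the count just established this happens iff some element $\alpha \in \Gamma$ has two successive factorization lengths differing by exactly $j$, which is precisely the statement $j \in \Delta(\Gamma)$. I expect the main obstacle to be the spanning claim for $(I_j)_\alpha$ --- verifying that the $R_\Gamma$-module generated by the listed relations, when restricted to degree $\alpha$, is spanned exactly by the degree-$\alpha$ relations of the listed form, rather than by products of smaller relations with monomials that could conceivably produce "new" identifications; the point is that length is additive, so multiplying $\yy^{\aa'} - \yy^{\bb'}$ (a relation in degree $\gamma$, with $\big||\aa'|-|\bb'|\big| \le j$) by a monomial $\yy^{\cc}$ of degree $\delta$ yields $\yy^{\aa'+\cc} - \yy^{\bb'+\cc}$, a relation in degree $\gamma+\delta$ between factorizations whose lengths still differ by $|\aa'|-|\bb'|$, hence of the listed form. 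Once this is nailed down, everything else is a short diagram chase plus the elementary combinatorics of gaps in a totally ordered finite set.
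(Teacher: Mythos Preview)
Your proof is correct and follows essentially the same line as the paper's: both rest on the observation that multiplying a binomial generator $\yy^{\aa'}-\yy^{\bb'}$ by a monomial preserves the length difference $|\aa'|-|\bb'|$, so $(I_j)_\alpha$ is spanned by the degree-$\alpha$ binomials of the listed form. The paper works directly in $I_j/I_{j-1}$ (checking that a weight-$j$ binomial $\yy^\aa-\yy^\bb$ lies in $I_{j-1}$ precisely when there is a factorization $\cc$ of intermediate length, via $\yy^\aa-\yy^\bb=(\yy^\aa-\yy^\cc)+(\yy^\cc-\yy^\bb)$) rather than first computing $\mathcal H(R_\Gamma/I_j;\alpha)=N_j(\alpha)$ and subtracting, but the two packagings are equivalent and your careful verification of the spanning claim fills in a step the paper's terse proof leaves implicit.
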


\begin{proof}
It is immediate from the definition that $I_{j-1} \subset I_j$ for all $j \ge 1$.  Fix $\alpha \in \Gamma$ and factorizations $\aa, \bb \in \mathsf Z(\alpha)$ with $|\bb| - |\aa| = j \ge 1$.  The binomial $\yy^\aa - \yy^\bb \in I_j$ lies in $I_{j-1}$ precisely when there is a factorization $\cc \in \mathsf Z(\alpha)$ such that $|\aa| < |\cc| < |\bb|$, since $\yy^\aa - \yy^\bb = (\yy^\aa - \yy^\cc) + (\yy^\cc - \yy^\bb)$.  It follows that (i) $I_{j-1} \subsetneq I_j$ if and only if $j \in \Delta(\Gamma)$, and (ii) $\mathcal H(I_j/I_{j-1};\alpha)$ yields the desired quantity.  
\end{proof}

\begin{cor}\label{c:deltahilbert}
Suppose $\Gamma = \<\alpha_1, \ldots, \alpha_k\> \subset A$.  
For each $j \in \Delta(\Gamma)$, the set 
$$\{\alpha \in \Gamma : j \in \Delta(\alpha)\} \subset \Gamma$$
is a disjoint union of finitely many cones.  
\end{cor}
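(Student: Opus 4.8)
The plan is to deduce Corollary~\ref{c:deltahilbert} from Theorem~\ref{t:deltahilbert} together with the multigraded Hilbert theory developed in Section~\ref{s:hilbert}. By Theorem~\ref{t:deltahilbert}, for a fixed $j \in \Delta(\Gamma)$ the quotient module $I_j/I_{j-1}$ is a finitely generated, modestly $A$-graded $R_\Gamma$-module (finitely generated because $R_\Gamma$ is Noetherian and $I_j$ is an ideal, modest by Remark~\ref{r:modestgrading}), and its Hilbert function $\mathcal H(I_j/I_{j-1};\alpha)$ counts the number of successive length differences in $\mathsf L_\Gamma(\alpha)$ equal to $j$. In particular this count is positive exactly when $j \in \Delta(\alpha)$, so
$$\{\alpha \in \Gamma : j \in \Delta(\alpha)\} = \supp \mathcal H(I_j/I_{j-1};-) = \{\alpha \in A : \mathcal H(I_j/I_{j-1};\alpha) \neq 0\}.$$

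The next step is to invoke Theorem~\ref{t:hilbert2}, which tells us that $\mathcal H(I_j/I_{j-1};-)$ is eventually quasipolynomial. By Definition~\ref{d:eventualquasipolynomial}(c) (equivalently Theorem~\ref{t:affineequiv}(c)), it is a sum of simple quasipolynomials $f_1, \ldots, f_t$ with pairwise disjoint supports, where each $f_i$ is supported on a cone $C_i = C(\beta_i;\alpha_{i,1},\ldots,\alpha_{i,r_i})$ and coincides with a polynomial on the projection of $C_i$ to $\NN^d$. Because the supports are disjoint, the support of the sum is the disjoint union $\bigcup_i \supp f_i$. It therefore suffices to show that the support of each simple quasipolynomial $f_i$ is itself a finite disjoint union of cones.

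This last reduction is the technical heart of the argument, and it is where I expect the main (if mild) obstacle to lie. Fix one simple quasipolynomial $f$ supported on a cone $C = C(\beta;\gamma_1,\ldots,\gamma_s)$, coinciding with a polynomial $p$ on $\rho(C) \subset \NN^d$. Via the bijection of Lemma~\ref{l:projection}, identifying $C$ with $\NN^s$, the support of $f$ inside $C$ is the set of lattice points $(c_1,\ldots,c_s) \in \NN^s$ at which $p(c_1,\ldots,c_s) \neq 0$, i.e. the complement in $\NN^s$ of the zero set of a single polynomial. One shows this complement is a finite union of cones by induction on $s$: project away one coordinate, and observe that the set of $(c_1,\ldots,c_{s-1})$ for which $p$ is identically zero in the remaining variable is cut out by finitely many polynomial equations (the vanishing of all coefficients), handled by induction, while for the remaining $(c_1,\ldots,c_{s-1})$ the polynomial $p$ in the last variable has only finitely many roots, so all but finitely many values of the last coordinate contribute — again a finite union of cones (the finitely many ``small'' values give $0$-dimensional cones, the tail gives a shifted copy of a ray). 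Refining the resulting collection to make it disjoint (splitting cones along coordinate hyperplanes) and translating back through Lemma~\ref{l:projection} completes the proof. Alternatively, one may argue more cleanly by noting that $\{\alpha : \mathcal H(I_j/I_{j-1};\alpha) \neq 0\}$ is the support of a finitely generated $A$-graded module, hence a so-called \emph{Presburger set}, and every Presburger subset of $\NN^d$ that is the support of an eventually quasipolynomial function decomposes as a finite disjoint union of (translated) cones — this is exactly the structure exhibited by Theorem~\ref{t:affineequiv}(c) applied to the characteristic function, so the two approaches coincide.
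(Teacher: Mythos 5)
Your overall strategy --- identify the set with the nonzero locus of $\mathcal H(I_j/I_{j-1};-)$, then invoke Theorems~\ref{t:hilbert2} and~\ref{t:affineequiv}(c) --- is exactly what the paper does (its proof is a one-line citation of those same three results), and you are right that the delicate point is the passage from ``sum of simple quasipolynomials on disjoint cones'' to ``nonzero locus is a finite union of cones.''

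However, your proposed way of closing this gap does not work. The inductive claim that the complement in $\NN^s$ of the zero set of a polynomial $p$ is a finite union of cones is false: as $(c_1,\ldots,c_{s-1})$ varies, the finitely many roots of $p(c_1,\ldots,c_{s-1},\,\cdot\,)$ move, so the ``tail'' does not assemble into a fixed translated ray. Concretely, $p(x,y)=(x-y^2)^2$ is a nonnegative integer-valued polynomial on $\NN^2$ whose nonvanishing locus is $\NN^2\setminus\{(y^2,y):y\in\NN\}$, and this is \emph{not} a finite union of cones: finite unions of cones in $\NN^d$ are precisely the Presburger-definable (semilinear) subsets, and $\{(y^2,y):y\in\NN\}$ is not Presburger-definable, since it would let one define squaring and hence multiplication. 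The ``Presburger set'' alternative you offer is circular for the same reason: that the support of the module is semilinear is exactly what needs proving, and Theorem~\ref{t:affineequiv}(c) applies to $\mathcal H(I_j/I_{j-1};-)$, not to the characteristic function of its support. What actually closes the gap is the \emph{module} structure rather than polynomial-zero-set combinatorics. For instance, take a prime filtration $0=N_0\subset\cdots\subset N_t=I_j/I_{j-1}$ with $N_i/N_{i-1}\cong(R_\Gamma/P_i)(-\gamma_i)$ for $A$-homogeneous primes $P_i$: each $R_\Gamma/P_i$ is a finitely generated $A$-graded domain, so its set of nonzero degrees is a finitely generated subsemigroup of $A$ and hence a finite disjoint union of cones, and the support of $I_j/I_{j-1}$ is the union of the $\gamma_i$-translates of these sets, which can then be refined to a disjoint union. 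Compare the paper's Lemma~\ref{l:aperystrat}, where the $\{0,1\}$-valuedness of the Hilbert function forces the restricted polynomial on each cone to be a constant $0$ or $1$; here $\mathcal H(I_j/I_{j-1};-)$ can exceed $1$, so that shortcut is unavailable and some such structural argument is required.
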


\begin{proof}
This follows from Theorems~\ref{t:affineequiv}(c),~\ref{t:hilbert2} and~\ref{t:deltahilbert}.  
\end{proof}

Specializing Theorem~\ref{t:deltahilbert} to numerical semigroups yields Corollary~\ref{c:deltanumerical}.  

\begin{cor}[{\cite[Theorem~1]{deltaperiodic}}]\label{c:deltanumerical}
For any numerical semigroup $\Gamma = \<n_1, \ldots, n_r\> \subset \NN$, the function $\Delta:\Gamma \to 2^\NN$ is eventually periodic with period dividing $\lcm(n_1,n_r)$.  
\end{cor}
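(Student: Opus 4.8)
The plan is to derive this from Theorem~\ref{t:deltahilbert} together with the $\NN$-graded version of Hilbert's theorem (Theorem~\ref{t:hilbert}), exploiting the fact that a numerical semigroup lives inside $\NN$ and that $(y_1,y_r)$ plays a distinguished role. First I would fix $\Gamma = \<n_1,\ldots,n_r\>$ and consider the ascending chain $I_\mathsf L = I_0 \subset I_1 \subset I_2 \subset \cdots$ of homogeneous ideals in $R_\Gamma$ provided by Theorem~\ref{t:deltahilbert}. Since $R_\Gamma$ is Noetherian, this chain stabilizes: there is some $N$ with $I_j = I_N$ for all $j \ge N$. By the ``in particular'' clause of Theorem~\ref{t:deltahilbert}, $I_{j-1} \subsetneq I_j$ exactly when $j \in \Delta(\Gamma)$, so stabilization already recovers the known fact that $\Delta(\Gamma)$ is finite, and it suffices to control, for each fixed $j$ with $1 \le j \le N$, the set $D_j = \{\alpha \in \Gamma : j \in \Delta(\alpha)\}$, which by Theorem~\ref{t:deltahilbert} is precisely the support of the $\NN$-graded (via degrees $n_i$) module $I_j/I_{j-1}$ — that is, $D_j = \{n \in \Gamma : \mathcal H(I_j/I_{j-1};n) \ne 0\}$.

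Next I would argue that each $I_j/I_{j-1}$, as a finitely generated graded module over $R_\Gamma = \kk[y_1,\ldots,y_r]$ with $\deg(y_i) = n_i$, has $(y_1,y_r)$ as a homogeneous system of parameters, or more precisely that $y_1$ is a nonzerodivisor and the module has Krull dimension at most $2$. The key point, as in the proof of Theorem~\ref{t:lennumerical}, is cancellativity: multiplication by $y_1$ takes a factorization of $\alpha$ to a factorization of $\alpha + n_1$, shifting all lengths by $1$, so it induces an injection on the relevant binomial-spanned pieces and hence $y_1$ is a nonzerodivisor on each $I_j/I_{j-1}$. Dimension at most $2$ follows because $R_\Gamma$ has Krull dimension $r$ but the relevant subquotients are supported on $\Gamma \subset \NN$, where Hilbert functions are genuinely one-variable; concretely, one shows $y_1^a - c\,y_r^b \in I_\mathsf L \subseteq I_{j-1}$ for suitable $a,b$ (any common length-equalized pair of factorizations of $\lcm$-type elements), so $R_\Gamma/\<y_1,y_r\>$ kills the module up to a nilpotent, giving $\dim I_j/I_{j-1} \le 2$. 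Then Theorem~\ref{t:hilbert} applies: for $n \gg 0$, $\mathcal H(I_j/I_{j-1};n)$ agrees with a quasipolynomial of degree at most $1$, and since $(y_1,y_r)$ is a system of parameters, its period divides $\lcm(\deg(y_1),\deg(y_r)) = \lcm(n_1,n_r)$.

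From there the conclusion is a matter of assembling the pieces. For each $j$, eventual quasipolynomiality of $\mathcal H(I_j/I_{j-1};n)$ with period dividing $\lcm(n_1,n_r)$ means its zero set, restricted to $\Gamma$ and to sufficiently large $n$, is a union of residue classes modulo $\lcm(n_1,n_r)$ — hence $D_j \cap \{n : n \gg 0\}$ is $\lcm(n_1,n_r)$-periodic. Since $\Delta(\alpha) = \{j : \alpha \in D_j\}$ and there are only finitely many relevant $j$ (namely $j \le N$), the function $\Delta:\Gamma \to 2^\NN$ is eventually periodic with period dividing $\lcm(n_1,n_r)$, which is exactly the statement of Corollary~\ref{c:deltanumerical}. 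One subtlety worth a sentence: a quasipolynomial of period dividing $\pi$ need not have zero set that is a union of residue classes mod $\pi$ unless we know it vanishes on a whole residue class whenever it vanishes once far out — but this is automatic, since on a fixed residue class the quasipolynomial restricts to an honest polynomial, which is either identically zero or has finitely many roots.

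The main obstacle I anticipate is the dimension/system-of-parameters bookkeeping for $I_j/I_{j-1}$: one must be careful that it is a finitely generated graded module (it is, being a subquotient of the Noetherian ring $R_\Gamma$) and that the ``diagonal'' binomial relation $y_1^a - c\,y_r^b$ genuinely lies in $I_\mathsf L = I_0 \subseteq I_{j-1}$, which requires producing, for some multiple of $n_1 n_r$, two equal-length factorizations using only the extreme atoms $\alpha_1 = n_1$ and $\alpha_r = n_r$ — this uses $n_1 < n_r$ and a length-balancing argument analogous to the one in the proof of Theorem~\ref{t:lennumerical}. Everything else is a direct application of the already-established Theorems~\ref{t:hilbert} and~\ref{t:deltahilbert}.
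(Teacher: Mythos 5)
Your approach is essentially the paper's: stabilize the chain from Theorem~\ref{t:deltahilbert}, identify each set $D_j = \{n : j \in \Delta(n)\}$ as the support of $\mathcal H(I_j/I_{j-1};-)$, and obtain the period bound from a homogeneous system of parameters for each $I_j/I_{j-1}$ built out of $y_1$ and $y_r$. The paper's own proof is telegraphic but follows exactly this route, citing Theorem~\ref{t:lennumerical} for the period bound.

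There is, however, a genuine error in the detail you offer to support the dimension and system-of-parameters step. The claim that $y_1^a - c\,y_r^b \in I_\mathsf L$ for suitable $a,b$ is false: $I_\mathsf L$ is the prime kernel of the map $y_i \mapsto x^{n_i}z$ into the integral domain $\kk[x,z]$, so $y_1^a - c\,y_r^b \in I_\mathsf L$ with $a,b>0$ would force both $an_1 = bn_r$ and $a = b$, hence $n_1 = n_r$, a contradiction. In fact no nonzero binomial supported on $\{y_1,y_r\}$ alone lies in $I_\mathsf L$; the length-balancing relations used in the proof of Theorem~\ref{t:lennumerical} have the form $y_i^m - y_1^a y_r^b$ for the \emph{intermediate} generators $1 < i < r$. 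The correct route is simpler than what you wrote: since $I_\mathsf L \subseteq I_{j-1} \subseteq \ann_{R_\Gamma}(I_j/I_{j-1})$, each $I_j/I_{j-1}$ is a finitely generated module over $R_\Gamma/I_\mathsf L$, so $\dim(I_j/I_{j-1}) \le \dim(R_\Gamma/I_\mathsf L) = 2$ automatically, and the h.s.o.p.\ $(y_1,y_r)$ for $R_\Gamma/I_\mathsf L$ established in Theorem~\ref{t:lennumerical} makes $(I_j/I_{j-1})/\<y_1,y_r\>(I_j/I_{j-1})$ finite length, yielding the $\lcm(n_1,n_r)$ period via Theorem~\ref{t:hilbert}. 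Your separate claim that $y_1$ is a nonzerodivisor on $I_j/I_{j-1}$ is also not clearly justified (it holds for $R_\Gamma/I_\mathsf L$ by primality, but that does not transfer to the subquotients), and fortunately it is unnecessary: a homogeneous system of parameters need not be a regular sequence. The remaining steps --- finiteness of $\Delta(\Gamma)$ via Noetherianity, the residue-class argument for the zero sets, and the assembly over finitely many $j$ --- are correct.
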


\begin{proof}
Applying Theorems~\ref{t:hilbert} and~\ref{t:deltahilbert} proves $\Delta:\Gamma \to 2^\NN$ is eventually periodic, and Theorem~\ref{t:lennumerical} produces the desired bound on its period.  
\end{proof}

\begin{remark}\label{r:deltaacc}
It is known that $\Delta(\Gamma)$ is finite for any finitely generated semigroup $\Gamma$ (see, for instance, \cite[Theorem~3.14]{nonuniq}).  We also recover this fact as a consequence of Theorem~\ref{t:deltahilbert} and the ascending chain condition on $R_\Gamma$.  
\end{remark}

The following examples use \texttt{Sage}~\cite{sage} and the \texttt{GAP} package \texttt{numericalsgps} \cite{numericalsgpsgap}.  

\begin{example}\label{e:deltaaffine}
Let $\Gamma = \<(1,1),(1,5),(2,5),(3,5),(5,1),(5,2),(5,3)\> \subset \NN^2$.  The delta set of $\Gamma$ is $\Delta(\Gamma) = \{1,2,4\}$, and Figure~\ref{f:deltaaffine} depicts which elements of $\Gamma$ have each of these values in their delta set.  Using notation from Theorem~\ref{t:deltahilbert}, $I_2 = I_3$ since $3 \notin \Delta(\Gamma)$.  
\end{example}

\begin{figure}
\begin{tabular}{c@{\hspace{0.2in}}c@{\hspace{0.2in}}c}

\includegraphics[width=1.7in]{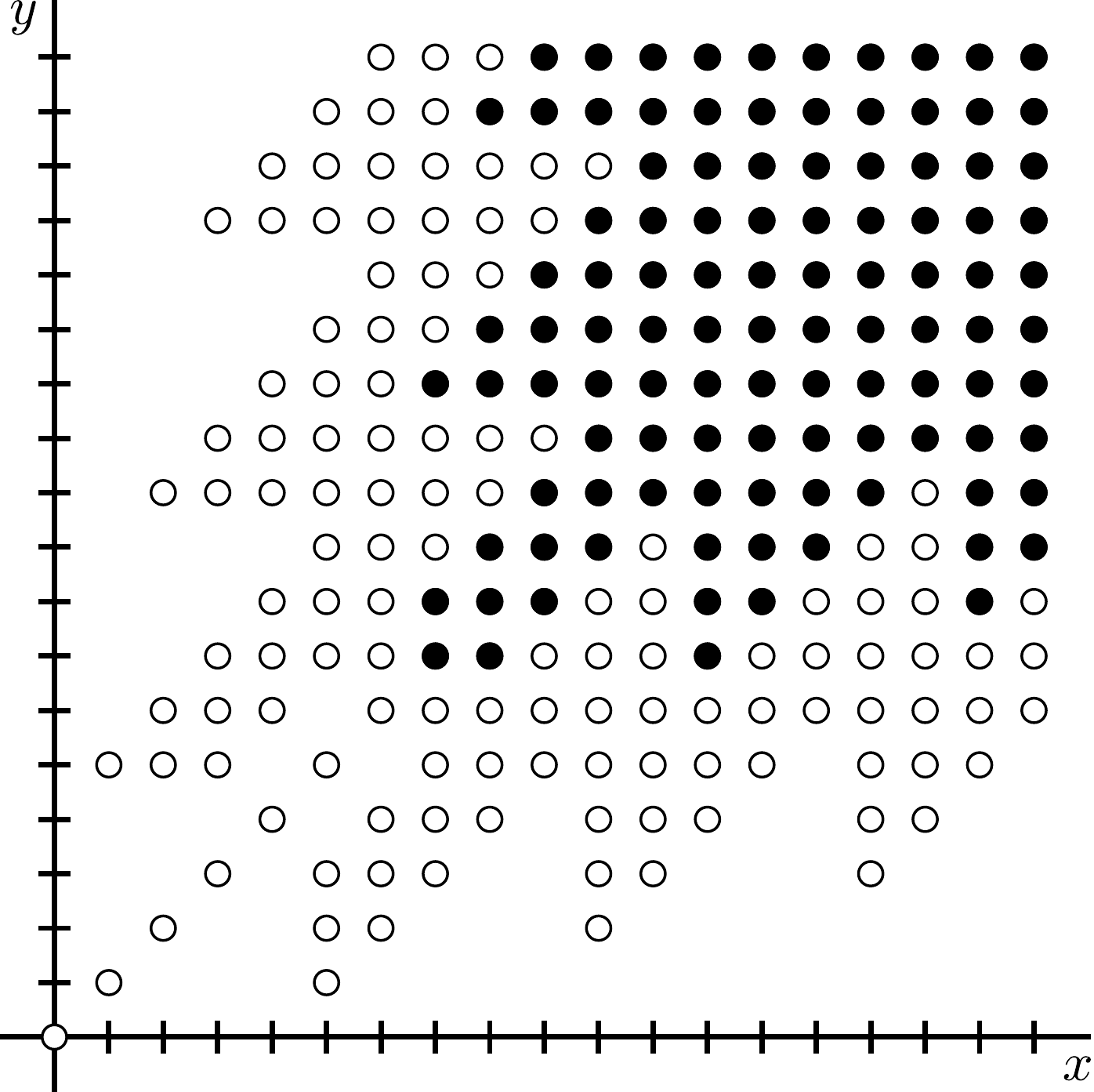}
&
\includegraphics[width=1.7in]{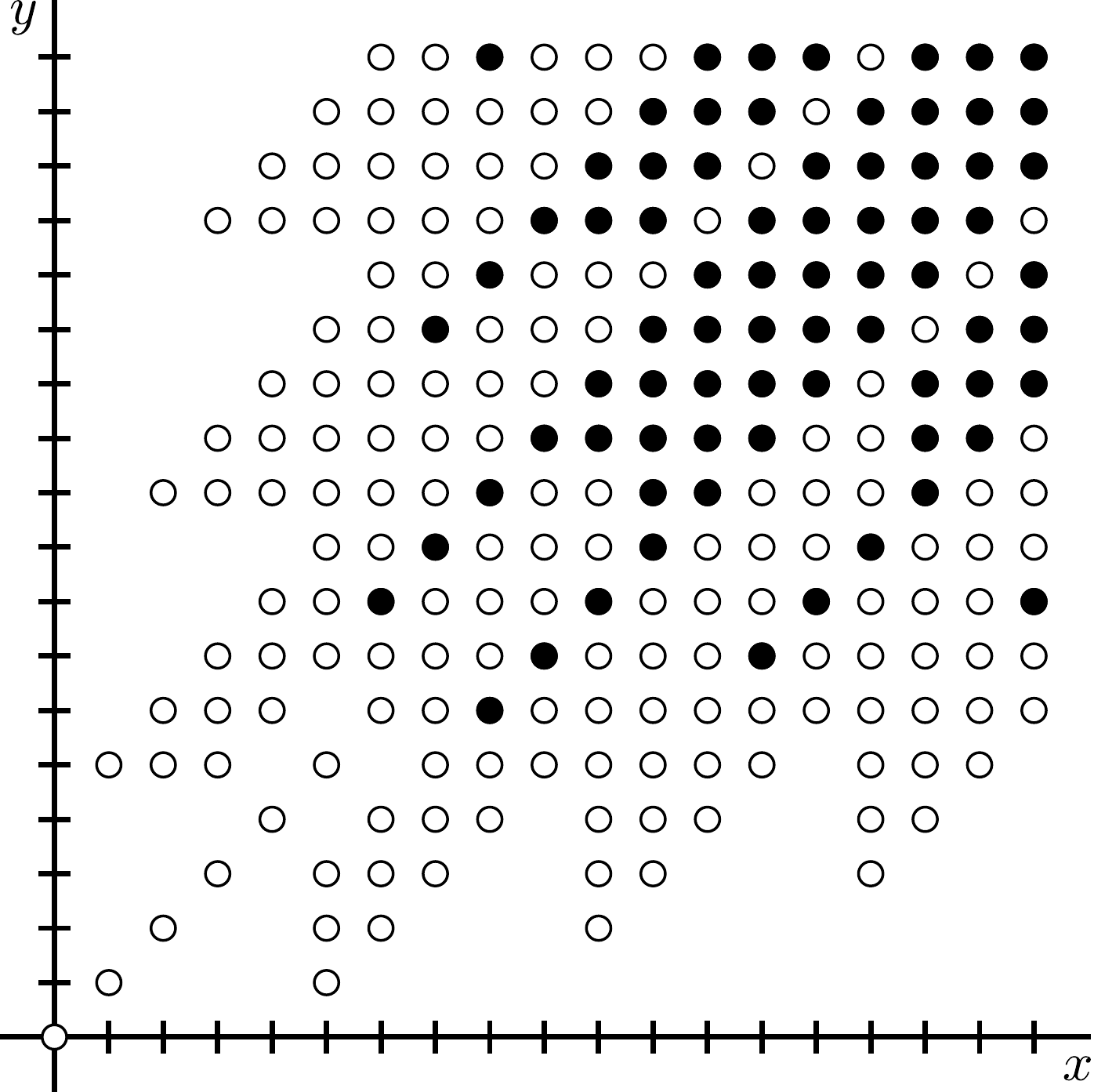}
&
\includegraphics[width=1.7in]{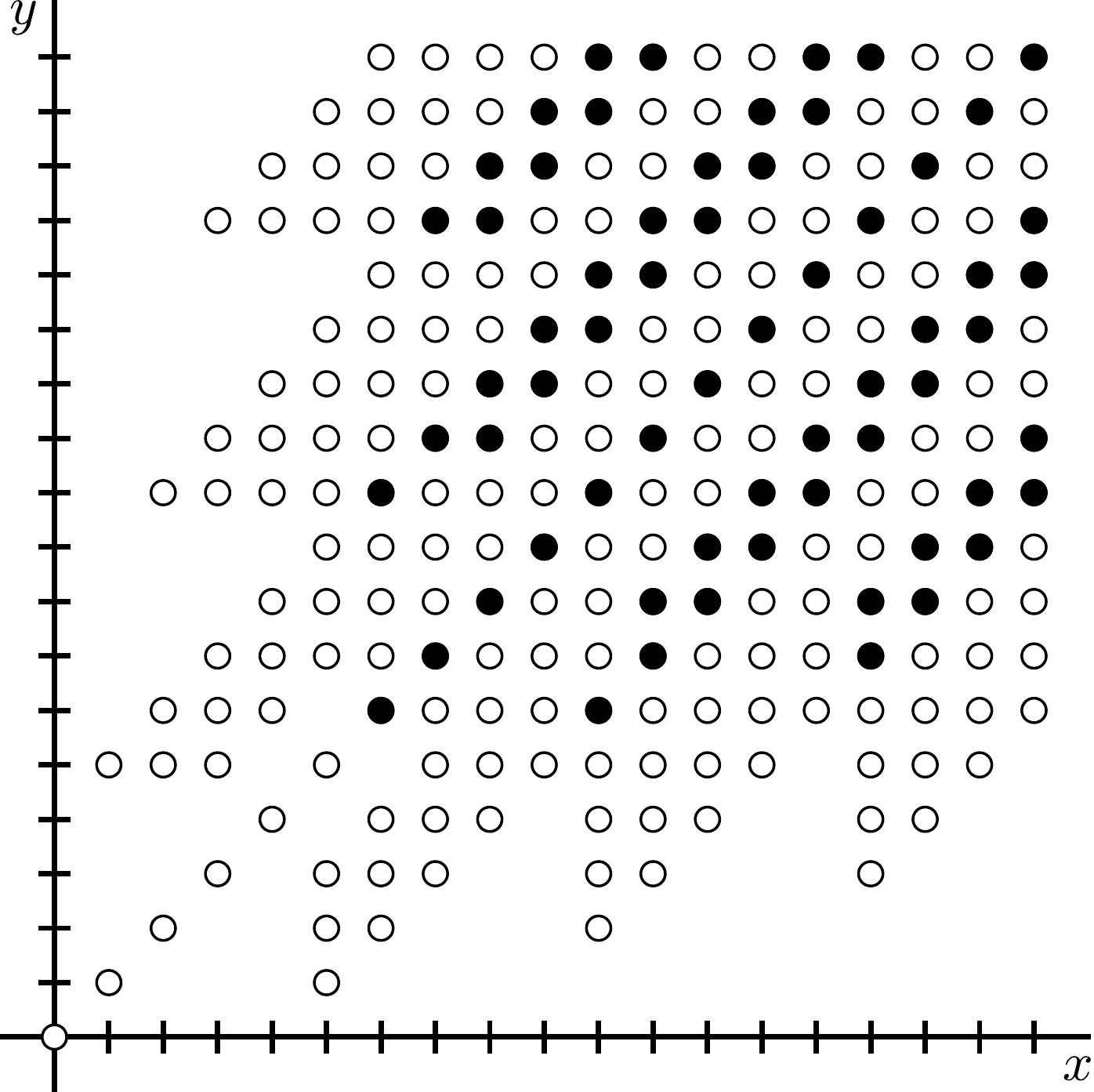}
\\
$\scriptstyle{\{\alpha \in \Gamma~:~1 \in \Delta(\alpha)\}}$
&
$\scriptstyle{\{\alpha \in \Gamma~:~2 \in \Delta(\alpha)\}}$
&
$\scriptstyle{\{\alpha \in \Gamma~:~4 \in \Delta(\alpha)\}}$
\end{tabular}
\caption{For $\Gamma \subset \NN^2$ as in Example~\ref{e:deltaaffine}, each filled dot denotes an element of $\Gamma$ with the specified value in its delta set.}  
\label{f:deltaaffine}
\end{figure}

\begin{example}\label{e:deltanumerical}
Let $\Gamma = \<6,9,20\> \subset \NN$ denote the numerical semigroup from Example~\ref{e:lennumerical}.  Resuming notation from Theorem~\ref{t:deltahilbert}, we have 
$$\begin{array}{rcl}
I_\mathsf L = I_0 = \<x^{11}z^3 - y^{14}\> 
&\subsetneq& I_1 = I_0 + \<x^3 - y^2, x^8z^3 - y^{12}\> \\
&\subsetneq& I_2 = I_1 + \<x^5z^3 - y^{10}\> \\
&\subsetneq& I_3 = I_2 + \<x^2z^3 - y^8\> \\
&\subsetneq& I_4 = I_3 + \<xy^6 - z^3\> = I_5 = I_6 = \cdots
\end{array}$$
The quotient $I_1/I_0$ has dimension 1, and $\mathcal H(I_1/I_0;n) > 0$ for all $n \ge 62$, meaning $\{n \in \Gamma : 1 \in \Delta(n)\}$ has eventual period 1.  The remaining nonzero quotients are each dimension 0, and the sets $\{n \in \Gamma : j \in \Delta(n)\}$ for $j = 2,3,4$ have period $20$ for $n \ge 92$, $n \ge 74$, and $n \ge 56$, respectively (based on computation, each of these bounds is sharp as well).  Figure~\ref{f:deltanumerical} depicts these sets, demonstrating that $\Delta:\Gamma \to 2^\NN$ is periodic for $n \ge \max(62,92,74,56) = 92$ with period $\lcm(1,20) =~20$.  Notice that $x^6 - y^4 \in I_2$ satisfies $\deg(x^6 - y^4) = 36 < \deg(x^5z^3 - y^{10}) = 90$, but since we can write $x^6 - y^4 = (x^6 - x^3y^2) + (x^3y^2 - y^4) \in I_1$, it does not constitute a generator of $I_2/I_1$.  
\end{example}

\begin{figure}
\includegraphics[width=5.6in]{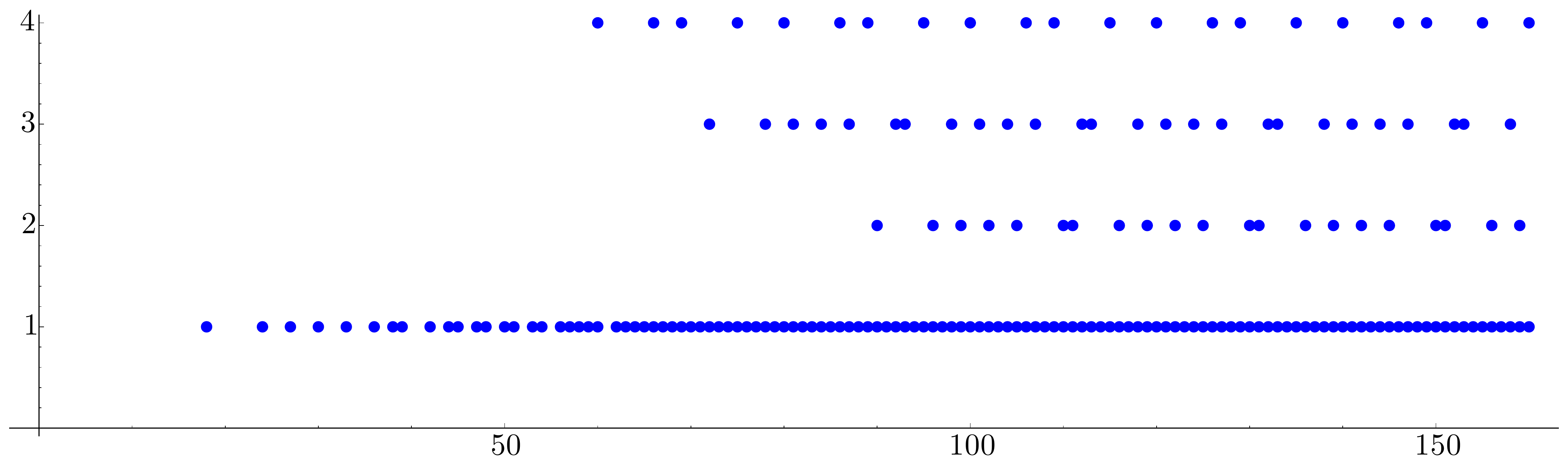}
\medskip
\caption{A plot showing the delta sets of elements in the numerical semigroup $\Gamma = \<6,9,20\>$ from Example~\ref{e:deltanumerical}.  Here, a dot is placed at the point $(n,d)$ whenever $d \in \Delta(n)$.}  
\label{f:deltanumerical}
\end{figure}

\begin{remark}\label{r:deltaalgorithm}
One major consequence of Theorem~\ref{t:deltahilbert} is an algorithm for computing $\Delta(\Gamma)$ for any finitely generated semigroup $\Gamma$.  
In general, the primary difficulty in computing $\Delta(\Gamma)$ is ensuring that a given value does \emph{not} occur in $\Delta(\Gamma)$.  
Indeed, some elements of $\Delta(\Gamma)$ may only occur in the delta sets of a small finite number of semigroup elements.  For example, if $\Gamma = \<17,33,53,71\>$, then $\Delta(\Gamma) = \{2,4,6\}$, but $6$ is only found in $\Delta(266)$, $\Delta(283)$, and $\Delta(300)$.  

As such, although it is computationally feasible to compute the delta set of any single element of $\Gamma$ (since each has only finitely many factorizations), this cannot be accomplished for all of the (infinitely many) elements of $\Gamma$.  To date, all existing delta set algorithms use some version of Corollary~\ref{c:deltanumerical} to restrict this computation to a finite list of semigroup elements, but consequently all such algorithms are limited to numerical semigroups; see \cite{dynamicalg} for more detail.  

Theorem~\ref{t:deltahilbert} provides the first delta set algorithm for finitely generated semigroups, one which does not rely on computing delta sets of individual semigroup elements.  In particular, computing generators for the ideals in Theorem~\ref{t:deltahilbert} (using \texttt{4ti2} or \texttt{Normaliz}, for instance), together with Gr\"obner basis techniques, yields the delta set of any finitely generated semigroup.  The resulting algorithm is already implemented and will be available in the next release of the \texttt{GAP} package \texttt{numericalsgps} \cite{numericalsgpsgap}, and a discussion of its design and implementation, along with benchmarks, appears in \cite{affineinvariantcomp}.  See also the survey~\cite{compoverview} for an overview of factorization invariant computation.  
\end{remark}

\section{$\omega$-primality}\label{s:omega}


The main result of this section is Theorem~\ref{t:omegahilbert}, which states that the $\omega$-primality invariant (Definition~\ref{d:omega}) is eventually quasilinear over any semigroup $\Gamma \subset A$.  This is proven in two steps: first, we prove that the maximum factorization length function is eventually quasilinear for any such semigroup $\Gamma$ (Theorem~\ref{t:maxlenhilbert}); next, we apply Theorem~\ref{p:omegamaxlen}, which expresses the $\omega$-function of $\Gamma$ in terms of maximum factorization length functions of certain subsemigroups of $\Gamma$.  Specializing Theorems~\ref{t:maxlenhilbert} and~\ref{t:omegahilbert} to numerical semigroups (Corollaries~\ref{c:maxlennumerical} and~\ref{c:omeganumerical}) recovers known results.  

\begin{defn}\label{d:maxlen}
Suppose $\Gamma \subset A$.  The \emph{maximum factorization length} and \emph{minimum factorization length} functions $\mathsf M_\Gamma, \mathsf m_\Gamma:\Gamma \to \NN$ are given by $\mathsf M_\Gamma(\alpha) = \max \mathsf L_\Gamma(\alpha)$ and $\mathsf m_\Gamma(\alpha) = \min \mathsf L_\Gamma(\alpha)$ for each $\alpha \in \Gamma$.  
\end{defn}

We begin by realizing the max factorization length function of any $\Gamma \subset A$ as the Hilbert function of a multigraded module over a graded $R_\Gamma$-algebra (Theorem~\ref{t:maxlenhilbert}).  Corollary~\ref{c:maxlennumerical} examines the case when $\Gamma$ is a numerical semigroup.  An analogous construction yields similar results for the min factorization length function (Corollary~\ref{c:minlenhilbert}).  

\begin{thm}\label{t:maxlenhilbert}
If $\Gamma = \<\alpha_1, \ldots, \alpha_r\> \subset A$, then $\mathsf M_\Gamma:\Gamma \to \NN$ is eventually quasilinear.  
\end{thm}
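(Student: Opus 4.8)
The plan is to mimic the strategy used for the length-set cardinality function $|\mathsf L_\Gamma(-)|$ in Proposition~\ref{p:lenhilbert}: realize $\mathsf M_\Gamma$ as the Hilbert function of a suitable module over an $A$-graded algebra, and then invoke Theorem~\ref{t:hilbert2} to conclude eventual quasipolynomiality, finally pinning down the cumulative degree to be at most $1$ (so that ``quasipolynomial'' becomes ``quasilinear''). The natural algebra to work with is $R_\Gamma$ adjoined an extra variable $t$ that records factorization length; that is, consider the ring $S = R_\Gamma[t] = \kk[y_1,\ldots,y_r,t]$, graded by $A \oplus \NN$ (or, if one prefers to stay within an $A$-grading, one can encode length via the torsion-free part and work over $\kk[x_1,\ldots,x_d,t]$ as in the monomial-map description of $I_\mathsf L$). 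A monomial $\yy^\aa t^n$ of $S$ should be thought of as ``the factorization $\aa$, flagged with a number $n \le |\aa|$''; the point is that within the degree-$\alpha$ piece (for the $A$-grading) the maximum exponent of $t$ that actually appears among monomials $\yy^\aa t^n$ with $n \le |\aa|$ is precisely $\mathsf M_\Gamma(\alpha)$.

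First I would make this precise by choosing the right module. Let $N$ be the $S$-submodule of $R_\Gamma[t,t^{-1}]$-type object spanned by monomials $\yy^\aa t^n$ with $0 \le n \le |\aa|$, or more cleanly, take the ideal (or quotient) of $S$ whose degree-$(\alpha,n)$ component is one-dimensional exactly when some factorization $\aa \in \mathsf Z_\Gamma(\alpha)$ has $|\aa| \ge n$, and zero otherwise. Then the Hilbert function of this object, restricted to the slice $\{(\alpha,n) : n \in \NN\}$ for fixed $\alpha$, is the indicator of $\{0,1,\ldots,\mathsf M_\Gamma(\alpha)\}$, and summing (or, better, taking a further quotient that collapses this staircase to a single top element) recovers $\mathsf M_\Gamma(\alpha)+1$, hence $\mathsf M_\Gamma(\alpha)$, as a Hilbert function value. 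Concretely I expect the cleanest route is: grade $S$ by $A$ alone via $\deg(y_i) = \alpha_i$ and $\deg(t) = 0$, let $M$ be the quotient of the ideal generated by $\{\,\yy^\aa t^{|\aa|} : \aa \in \NN^r\,\}$ modulo the submodule generated by $\{\,\yy^\aa t^{|\aa|} - \yy^\bb t^{|\bb|} : |\aa| = |\bb|,\ \aa,\bb \in \mathsf Z_\Gamma(\alpha)\,\}$ together with relations forcing $\yy^\aa t^{|\aa|} = 0$ whenever $\yy^\aa$ is not of maximal length in its degree; then $\mathcal H(M;\alpha)$ equals $1$ for every $\alpha \in \Gamma$, which is not quite what we want, so the actual construction must instead let the single nonzero basis element in degree $\alpha$ carry the weight $\mathsf M_\Gamma(\alpha)$ by living in an appropriately shifted free module. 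I would therefore build $M$ as a quotient of $\bigoplus_\alpha R_\Gamma(-\alpha)$-style data, or — simplest of all — realize $\mathsf M_\Gamma(\alpha)$ as $\mathcal H(J;\alpha)$ where $J \subset S$ is the $A$-graded ideal $\langle \yy^\aa t^k : \aa \in \mathsf Z_\Gamma(\alpha),\ 1 \le k \le |\aa| \rangle$ after collapsing, in each degree $\alpha$, all monomials with the same $t$-exponent: then the degree-$\alpha$ piece has dimension equal to $\#\{1,\ldots,\mathsf M_\Gamma(\alpha)\} = \mathsf M_\Gamma(\alpha)$.

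With such an $M$ (finitely generated and modestly $A$-graded over a finitely generated $A$-graded $\kk$-algebra — finiteness following from Noetherianity of $S$ and the fact that only finitely many generators are needed, exactly as in Definition~\ref{d:lenideal}), Theorem~\ref{t:hilbert2} immediately gives that $\mathsf M_\Gamma(-) = \mathcal H(M;-)$ is eventually quasipolynomial of cumulative degree $\dim M$. The remaining task is the dimension bound: I must show $\dim M \le \dim\spann_\QQ(\Gamma) + 1$ and, on the ``linear'' slices relevant to quasilinearity, that the degree in each simple quasipolynomial summand is at most $1$. This is where I expect the main obstacle to lie. The bound should come from the same monomial-map analysis as in Proposition~\ref{p:lenhilbert}: the ``length'' coordinate contributes at most one extra dimension beyond $\spann_\QQ(\Gamma)$, because the map $(\alpha,n) \mapsto \alpha$ has fibers that are intervals $\{0,\ldots,\mathsf M_\Gamma(\alpha)\}$, and $\mathsf M_\Gamma$ grows at most linearly (indeed $\mathsf M_\Gamma(\alpha) \le \mathsf M_\Gamma$ is subadditive, so on any ray $k\alpha$ it is $O(k)$). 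Making ``cumulative degree $\le 1$ on each cone'' rigorous via the support/cone description in Theorem~\ref{t:affineequiv}(c), rather than merely ``degree of the Hilbert quasipolynomial is $\le 1$'', is the delicate point: one must rule out simple quasipolynomial summands supported on cones of dimension $\ge 2$ carrying a nonconstant polynomial, which amounts to showing $\dim M \le \dim\spann_\QQ(\Gamma)+1$ precisely and then arguing, as in the affine-reduction step of Proposition~\ref{p:lenhilbert}, that projecting along the length coordinate recovers $\spann_\QQ(\Gamma)$. For non-affine $\Gamma$ I would finish exactly as in that proof, by pushing forward along $\rho \colon A \to \NN^d$ and using $\mathsf M_\Gamma(\alpha) \le \mathsf M_{\rho(\Gamma)}(\rho(\alpha))$ to transfer the degree bound.
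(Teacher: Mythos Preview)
Your overall strategy matches the paper's: adjoin an auxiliary degree-zero variable to track factorization length and exhibit $\mathsf M_\Gamma$ (up to an additive constant) as a Hilbert function. But every concrete construction you propose has the same real gap: none is simultaneously \emph{modestly} $A$-graded and finitely generated over a finitely generated $A$-graded $\kk$-algebra, and Theorem~\ref{t:hilbert2} requires both. Once you set $\deg(t)=0$ in $S=R_\Gamma[t]$, the ring $S$ itself is not modestly graded, and neither is any $S$-ideal such as your $J$, since $t\cdot J\subset J$ forces every nonzero $A$-graded piece to be infinite-dimensional. Your earlier candidate, the $\kk$-span of $\{\yy^\aa t^n:0\le n\le|\aa|\}$, \emph{is} modest, but it is not an $S$-submodule (multiplication by $t$ escapes it), and as an $R_\Gamma$-module it is not finitely generated (the elements $y_1^nt^n$ for $n\ge 0$ cannot be reached from any finite generating set). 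The ``collapsing'' step does not rescue this, because any $S$-stable quotient still carries a free $\kk[t]$-action in each $A$-degree.

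The paper's fix is exactly the missing idea: pass to a \emph{subring} all of whose generators have strictly positive $A$-degree. Using two degree-zero variables $x_1,x_2$, it works inside $R=\kk[x_iy_j:i\le 2,\ j\le r]\subset R_\Gamma[x_1,x_2]/I_{\mathsf L}$. Every monomial $x_1^bx_2^c\yy^\aa\in R$ then has $b+c=|\aa|$, so each $A$-graded piece is finite-dimensional, and for fixed $\aa$ there are exactly $|\aa|+1$ such monomials. Quotienting by the monomial ideal of non-maximal-length factorizations (which is $R$-stable because $|\aa|<|\bb|$ implies $|\aa+\mathbf e_j|<|\bb+\mathbf e_j|$), and using $I_{\mathsf L}$ to identify equal-length factorizations, leaves precisely $\mathsf M_\Gamma(\alpha)+1$ monomials in degree $\alpha$. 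Your one-variable idea can be salvaged in the same way, by working in the subring $\kk[y_j,ty_j:j\le r]$ rather than in an ideal of $R_\Gamma[t]$; the essential point you are missing is that the length-tracking variable must never appear as a ring generator on its own. Your closing discussion of the quasilinearity (cumulative-degree) bound is in the right spirit; the paper handles it by the elementary observation that $\mathsf M_\Gamma(\alpha)$ is bounded above by $|\alpha'|$ where $\alpha'$ is the projection to $\NN^d$, forcing at most linear growth.
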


\begin{proof}
Let 
$$S = R_\Gamma[x_1, x_2]/I_\mathsf L = \kk[x_1, x_2, y_1, \ldots, y_r]/I_\mathsf L$$
with $\deg(x_1) = \deg(x_2) = 0$, and consider the subring 
$$R = \kk[x_1y_1, x_2y_1, \ldots, x_1y_r, x_2y_r] \subset S$$
of $S$.  Since each generator of $R$ has nonzero degree, each graded degree of $R$ has finite dimension over $\kk$.  Let 
$$I = \<x_1^bx_2^c\yy^\aa \in M : |\aa| < \mathsf M_\Gamma(\alpha), \aa \in \mathsf Z_\Gamma(\alpha)\> \subset R.$$
The key observation is that for $\alpha \in \Gamma$, $\aa \in \mathsf Z_\Gamma(\alpha)$ and $b, c \in \NN$, the monomial $x_1^bx_2^c\yy^\aa$ lies in $I$ precisely when $|\aa| < \mathsf M_\Gamma(\alpha)$.  Indeed, if $|\aa| < |\bb|$ for some $\bb \in \mathsf Z_\Gamma(\alpha)$, then $|\aa + \mathbf e_i| < |\bb + \mathbf e_i|$, so the set of monomials corresponding to non-maximal length factorizations is closed under multiplication by monomials in $R$.  

Now, this means for $\aa, \bb \in \mathsf Z(\alpha)$, any two monomials $x_1^bx_2^c\yy^\aa, x_1^{b'}x_2^{c'}\yy^\bb \in R$ with nonzero image modulo $I$ satisfy $|\aa| = |\bb| = \mathsf M_\Gamma(\alpha)$, and thus have equal image precisely when $b = b'$ and $c = c'$ by Theorem~\ref{p:lenhilbert}.  Additionally, each monomial $x_1^bx_2^c\yy^\aa \in R$ satisfies $|\aa| = b + c + 1$.  In particular, for each $\aa \in \NN^r$, $R$ has precisely $|\aa| + 1$ monomials of the form $x_1^bx_2^c\yy^\aa$.  This proves $\mathsf M_\Gamma(\alpha) = \mathcal H(R/I;\alpha) - 1$ for all $\alpha \in \Gamma$, which is eventually quasipolynomial by Theorem~\ref{t:hilbert2}.  

It remains to show that $\mathsf M_\Gamma$ is eventually quasilinear.  Fix $\alpha \in \Gamma$ and a maximal length factorization $\aa \in \mathsf Z_\Gamma(\alpha)$, written as $\alpha = \beta_1 + \cdots + \beta_{|\aa|}$ for $\beta_i \in \{\alpha_1, \ldots, \alpha_r\}$.  By the above argument, $\mathsf M_\Gamma(\beta_1 + \cdots + \beta_i) = i$ for each $i \le |\aa|$.  In particular, $\mathsf M_\Gamma(\alpha) \le |\alpha'|$, where $\alpha' \in \NN^d$ is the projection of $\alpha$ onto $\NN^d$, so $\mathsf M_\Gamma$ grows at most linearly.  Since factorization lengths are unbounded in $\Gamma$, $\mathsf M_\Gamma$ is also unbounded, so we are done.  
\end{proof}

Corollary~\ref{c:maxlennumerical}, as well as the portion of Corollary~\ref{c:minlenhilbert} pertaining to numerical semigroups, appeared as \cite[Theorems~4.2 and~4.3]{elastsets}, respectively.  

\begin{cor}\label{c:maxlennumerical}
If $\Gamma = \<n_1, \ldots, n_r\> \subset \NN$ is a numerical semigroup, then $\mathsf M_\Gamma$ is eventually quasilinear with period dividing $n_1$ and constant leading coefficient $1/n_1$.  
\end{cor}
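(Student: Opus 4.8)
The plan is to specialize the general construction from Theorem~\ref{t:maxlenhilbert} to the numerical semigroup setting and then apply Theorem~\ref{t:hilbert} (Hilbert's theorem, $\NN$-graded case) together with the refinement of Theorem~\ref{t:constcoeffs} to pin down the period and leading coefficient. First I would observe that in the numerical case, the ring $R$ and ideal $I$ constructed in the proof of Theorem~\ref{t:maxlenhilbert} become $\NN$-graded in the usual sense: since $\Gamma \subset \NN$, each generator $x_ly_j$ of $R$ has degree $n_j$, so $R$ is a standard-graded-by-$\NN$ algebra and $R/I$ is a finitely generated $\NN$-graded module. The established identity $\mathsf M_\Gamma(n) = \mathcal H(R/I;n) - 1$ then says that $\mathsf M_\Gamma$ agrees, for $n \gg 0$, with a quasipolynomial; since $\mathsf M_\Gamma$ grows at most linearly and is unbounded (as shown in the proof of Theorem~\ref{t:maxlenhilbert}), this quasipolynomial has degree exactly $1$, say $\mathsf M_\Gamma(n) = a_1(n)n + a_0(n)$ for $n \gg 0$.

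Next I would produce a homogeneous system of parameters for $M = R/I$ of the right degree so that Theorem~\ref{t:hilbert} forces the period of each coefficient to divide $n_1$. The natural candidate is the image of $x_1 y_1$ (of degree $n_1$): since $\Gamma$ is cancellative, $x_1y_1$ is a nonzerodivisor on $S = R_\Gamma[x_1,x_2]/I_\mathsf L$, and one checks it is a nonzerodivisor modulo $I$ as well, using that a maximal-length factorization of $\alpha$ plus $\mathbf e_1$ is a maximal-length factorization of $\alpha + n_1$. One then verifies that $M/\<x_1y_1\>M$ has finite length: any monomial $x_1^bx_2^c\yy^\aa$ with $a_1 \ge 1$ is, modulo $I_\mathsf L$, equivalent to one involving more copies of the smallest atom $n_1$, which pushes it into $\<x_1y_1\>M$ (since larger $\mathsf M_\Gamma$ values are eventually forced), and the remaining monomials involving only $y_2,\ldots,y_r$ form a module supported in finitely many degrees. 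Hence $\{x_1y_1\}$ is a homogeneous system of parameters for the $1$-dimensional module $M$, and Theorem~\ref{t:hilbert} gives that the period of $a_1$ (and $a_0$) divides $\deg(x_1y_1) = n_1$.

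Finally, to show $a_1$ is constant and equal to $1/n_1$, I would apply Theorem~\ref{t:constcoeffs}: $R$ has a generator of degree $n_1$, and one wants $\dim M/IM \le 1$ where $I = \<x \in R : \gcd(\pi,\deg(x)) = 1\>$; but in fact the cleanest route is to argue directly that $\mathsf M_\Gamma(n)/n \to 1/n_1$. Indeed $\mathsf M_\Gamma(n) \le n/n_1$ trivially (every factorization has at least $n_1$ times its length as total value, since $n_1$ is the smallest atom), and for $n$ of the form $kn_1 + t$ with $t$ in a suitable finite set one has $\mathsf M_\Gamma(n) \ge k - O(1)$ by prepending copies of $\alpha_1$; combining, $a_1(n) = 1/n_1$ for all residues, so $a_1$ is the constant $1/n_1$. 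The main obstacle I anticipate is the verification that $\{x_1y_1\}$ is genuinely a homogeneous system of parameters — specifically, checking that $M/\<x_1y_1\>M$ has dimension zero requires carefully tracking how the relations in $I_\mathsf L$ and the "non-maximal length" relations in $I$ interact, i.e.\ that every monomial of $M$ is, up to these relations, in the submodule generated by $x_1y_1$; but this is exactly the numerical-semigroup analogue of the linear-growth argument already completed in Theorem~\ref{t:maxlenhilbert}, so it should go through with modest bookkeeping.
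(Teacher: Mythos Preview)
Your overall strategy matches the paper's: specialize the $R/I$ from Theorem~\ref{t:maxlenhilbert} to the $\NN$-graded setting and apply Theorem~\ref{t:hilbert}. However, there is a genuine error in your dimension count. You correctly observe that the Hilbert quasipolynomial of $M = R/I$ has degree exactly~$1$, but then assert that $M$ is $1$-dimensional. By Theorem~\ref{t:hilbert}, a Hilbert quasipolynomial of degree $d-1$ corresponds to a module of dimension~$d$; since the degree here is~$1$, we have $\dim M = 2$, not~$1$. Consequently a single element cannot be a homogeneous system of parameters, and your proposed verification that $M/\langle x_1y_1\rangle M$ has finite length will fail: that quotient still has dimension~$1$. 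The paper instead shows that the pair $(x_1y_1, x_2y_1)$ is a homogeneous system of parameters for $M$; since both have degree $n_1$, Theorem~\ref{t:hilbert} still gives period dividing $\lcm(n_1,n_1) = n_1$. The fix to your argument is simply to include $x_2y_1$ as well.

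For the leading coefficient, your direct elementary argument (that $\mathsf M_\Gamma(n) \le n/n_1$ always, while $\mathsf M_\Gamma(kn_1 + t) \ge k - O(1)$ by prepending copies of $n_1$) is perfectly valid and in fact more explicit than the paper's route. The paper instead invokes Theorem~\ref{t:constcoeffs}: since $\gcd(\Gamma) = 1$, some generator of $R$ has degree coprime to $n_1$, forcing $a_1$ to be constant. The paper does not spell out the value $1/n_1$ in the proof, whereas your sandwich bound does; on the other hand, the paper's argument generalizes more readily when an explicit growth rate is not obvious.
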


\begin{proof}
Resume notation from the proof of Theorem~\ref{t:maxlenhilbert}, and write 
$$\mathsf M_\Gamma(n) = a_1(n)n + a_0(n)$$
for periodic functions $a_0,a_1:\NN \to \QQ$ and $n \gg 0$.  Applying Theorem~\ref{t:hilbert}, we wish to show that $(x_1y_1, x_2y_1)$ is a homogeneous system of parameters for $R/I$.  Indeed, $\dim R/I = 2$ by Theorem~\ref{t:maxlenhilbert}, and the quotient $R/\<x_1y_1,x_2y_1\>I$ has finite length.  Now, some element has degree relatively prime to $n_1$ since $\gcd(\Gamma) = 1$, so by Theorem~\ref{t:constcoeffs}, the leading coefficient $a_1$ is constant.  
\end{proof}

\begin{cor}\label{c:minlenhilbert}
Suppose $\Gamma \subset A$.  
The min factorization length function $\mathsf m_\Gamma:\Gamma \to \NN$ is eventually quasilinear.  Moreover, if $\Gamma = \<n_1, \ldots, n_r\> \subset \NN$ is a numerical semigroup, then $\mathsf m_\Gamma$ has period dividing $n_k$ and constant leading coefficient $1/n_k$.  
\end{cor}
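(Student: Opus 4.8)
The plan is to mirror the proofs of Theorem~\ref{t:maxlenhilbert} and Corollary~\ref{c:maxlennumerical}, replacing maximal-length factorizations by minimal-length ones throughout. I would again set $S = R_\Gamma[x_1, x_2]/I_\mathsf L$ with $\deg(x_1) = \deg(x_2) = 0$, pass to the subring $R = \kk[x_1y_1, x_2y_1, \ldots, x_1y_r, x_2y_r] \subset S$, and this time take
$$I = \<x_1^bx_2^c\yy^\aa \in R : \aa \in \mathsf Z_\Gamma(\alpha),\ |\aa| > \mathsf m_\Gamma(\alpha)\> \subset R,$$
where $\alpha = \deg(\yy^\aa)$. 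The observation that makes this work is that the monomials corresponding to \emph{non}-minimal-length factorizations are closed under multiplication by the generators of $R$: if $|\bb| < |\aa|$ for some $\bb \in \mathsf Z_\Gamma(\alpha)$, then $|\bb + \mathbf e_i| < |\aa + \mathbf e_i|$, so $\aa + \mathbf e_i$ is non-minimal for $\alpha + \alpha_i$. Just as in Theorem~\ref{t:maxlenhilbert}, it follows that $I$ is a monomial ideal and that, for each $\alpha \in \Gamma$, the surviving monomials of $R/I$ in degree $\alpha$ are exactly the $x_1^bx_2^c\yy^\aa$ with $|\aa| = \mathsf m_\Gamma(\alpha)$: distinct pairs $(b,c)$ give distinct images since $b + c + 1 = |\aa|$ is fixed, and distinct minimal-length factorizations of $\alpha$ agree modulo $I_\mathsf L$ by Proposition~\ref{p:lenhilbert}. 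Hence $\mathcal H(R/I;\alpha) = \mathsf m_\Gamma(\alpha) + 1$ for all $\alpha \in \Gamma$, so $\mathsf m_\Gamma$ is eventually quasipolynomial by Theorem~\ref{t:hilbert2}.

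Next I would upgrade this to quasilinearity. On one hand $\mathsf m_\Gamma(\alpha) \le \mathsf M_\Gamma(\alpha)$, which grows at most linearly by Theorem~\ref{t:maxlenhilbert}. On the other hand, writing $\alpha = \sum_j a_j\alpha_j$ and letting $\alpha', \alpha_j'$ denote the projections onto $\NN^d$, one has $|\alpha'| = \sum_j a_j|\alpha_j'| \le (\sum_j a_j)\max_j|\alpha_j'|$, so $\mathsf m_\Gamma(\alpha) \ge |\alpha'|/\max_j|\alpha_j'|$, which is unbounded on $\Gamma$ (e.g.\ along multiples of a single atom). Thus $\mathsf m_\Gamma$ has eventual degree exactly $1$, giving the first assertion.

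For a numerical semigroup, $d = 1$ forces $\dim R/I = 2$ by Theorem~\ref{t:hilbert2}, and I would argue --- exactly as in Corollary~\ref{c:maxlennumerical}, but now using the \emph{largest} generator $n_r$ (the $n_k$ of the statement) --- that $(x_1y_r, x_2y_r)$, both of degree $n_r$, is a homogeneous system of parameters for $R/I$. (For instance, for $i < r$ a large power of $x_1y_i$ or of $x_2y_i$ lies in $I$: the monomial $(x_1y_i)^k = x_1^ky_i^k$ encodes the factorization $k\mathbf e_i$ of $kn_i$, and splitting off as many copies of $n_r$ as possible gives $\mathsf m_\Gamma(kn_i) \le kn_i/n_r + O(1) < k$ for $k \gg 0$.) Theorem~\ref{t:hilbert} then yields period dividing $n_r$. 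Finally, to pin down the leading coefficient I would note that $k\mathbf e_r$ is the unique minimal-length factorization of $kn_r$ for every $k \ge 0$ --- any representation $\sum_j a_jn_j = kn_r$ satisfies $\sum_j a_j \ge k$, with equality only when $a_r = k$ --- so $\mathsf m_\Gamma(kn_r) = k$; together with constancy of the leading coefficient (Theorem~\ref{t:constcoeffs}, using that $\gcd(\Gamma) = 1$ forces $\Gamma$ to contain integers coprime to the period), this forces the leading coefficient to be $1/n_r$. Since the argument is essentially a mirror image of the max-length case, I do not expect a genuine obstacle; the point requiring care is the min/max asymmetry, namely that the correct ideal is generated by the non-minimal (not non-maximal) length factorizations and that in the numerical case the system of parameters comes from the largest atom $n_r$ rather than the smallest.
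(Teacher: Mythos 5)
Your proposal is correct and is exactly the ``analogous construction'' the paper has in mind: the paper never spells out a proof of Corollary~\ref{c:minlenhilbert}, saying only that an analogous construction to Theorem~\ref{t:maxlenhilbert} and Corollary~\ref{c:maxlennumerical} yields it, and that the numerical case appears in \cite{elastsets}. You correctly swap ``non-maximal'' for ``non-minimal'' in the monomial ideal $I$ (the closure-under-multiplication observation goes through verbatim with the inequality reversed), correctly observe that the system of parameters in the numerical case must come from the \emph{largest} generator $n_r$ (the paper's ``$n_k$'' in the statement is just its label for the largest generator), and supply a clean way to pin down the leading coefficient via $\mathsf m_\Gamma(kn_r) = k$ once constancy is known from Theorem~\ref{t:constcoeffs}. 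One cosmetic remark: the identity $|\aa| = b + c + 1$ you quote is a typo inherited from the paper's proof of Theorem~\ref{t:maxlenhilbert}; it should read $|\aa| = b + c$, which is what actually gives the count of $|\aa| + 1$ pairs $(b,c)$ and hence $\mathcal H(R/I;\alpha) = \mathsf m_\Gamma(\alpha) + 1$.
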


Corollary~\ref{c:lenleadingcoeff} refines Theorem~\ref{t:lennumerical}; see Remark~\ref{r:lenleadingcoeff}.  

\begin{cor}\label{c:lenleadingcoeff}
Resuming notation from Theorem~\ref{t:lennumerical}, we have
$$|\mathsf L_\Gamma(n)| = \frac{n_r-n_1}{gn_1n_r}n + a_0(n)$$
for $n \gg 0$, where $g = \min\Delta(\Gamma)$.  
\end{cor}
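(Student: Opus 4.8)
The plan is to derive the leading coefficient of $|\mathsf L_\Gamma(n)|$ by combining the quasilinear description from Theorem~\ref{t:lennumerical} with the two ``extreme'' length functions $\mathsf M_\Gamma$ and $\mathsf m_\Gamma$ studied in Corollaries~\ref{c:maxlennumerical} and~\ref{c:minlenhilbert}, and with the eventual periodicity of the delta set from Corollary~\ref{c:deltanumerical}. The starting observation is that, for $n \gg 0$, all successive differences in $\mathsf L_\Gamma(n) = \{\ell_1 < \cdots < \ell_m\}$ are bounded (the delta set is finite), and the whole length set sits in the interval $[\mathsf m_\Gamma(n), \mathsf M_\Gamma(n)]$. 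The key is to show that, for $n \gg 0$, these successive differences are \emph{eventually all equal to $g = \min\Delta(\Gamma)$}, so that $|\mathsf L_\Gamma(n)|$ is essentially $(\mathsf M_\Gamma(n) - \mathsf m_\Gamma(n))/g + 1$ up to a bounded (indeed periodic) correction.

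First I would make precise the claim that the successive differences are eventually $g$. Using the ideals $I_j$ from Theorem~\ref{t:deltahilbert}, the function $\mathcal H(I_j/I_{j-1};n)$ counts the number of length gaps of size exactly $j$ in $\mathsf L_\Gamma(n)$; by Theorem~\ref{t:hilbert} each such Hilbert function is eventually quasipolynomial in $n$. For $j = g$ the quotient $I_g/I_{g-1}$ has dimension $1$ (one obtains a gap of size $g$ for arithmetic-progression-like reasons once $n$ is large, and the set of such $n$ is cofinite with period dividing $\lcm(n_1,n_r)$, as in Example~\ref{e:deltanumerical}); for each $j > g$ the quotient $I_j/I_{j-1}$ has dimension $0$, hence $\mathcal H(I_j/I_{j-1};n)$ is eventually a \emph{periodic} function of $n$. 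Consequently, writing $N(n)$ for the total number of gaps of size $> g$, the function $N$ is eventually periodic in $n$, and
$$|\mathsf L_\Gamma(n)| = 1 + \frac{\mathsf M_\Gamma(n) - \mathsf m_\Gamma(n) - \sum_{j > g} j\cdot \mathcal H(I_j/I_{j-1};n)}{g} + N(n),$$
since summing $j$ over all gaps recovers $\mathsf M_\Gamma(n) - \mathsf m_\Gamma(n)$, the gaps of size exactly $g$ contribute $g$ times their count, and the count of gaps of size $g$ is $\big(|\mathsf L_\Gamma(n)| - 1\big) - N(n)$.

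Next I would extract the leading term. By Corollary~\ref{c:maxlennumerical}, $\mathsf M_\Gamma(n) = \frac{1}{n_1}n + (\text{periodic})$ for $n \gg 0$, and by Corollary~\ref{c:minlenhilbert}, $\mathsf m_\Gamma(n) = \frac{1}{n_r}n + (\text{periodic})$ for $n \gg 0$ (here $n_k = n_r$ in their notation, since $n_1 < \cdots < n_r$). Every other term in the displayed identity — the correction $\sum_{j > g} j\cdot\mathcal H(I_j/I_{j-1};n)$ and $N(n)$ — is eventually periodic in $n$, hence contributes nothing to the linear coefficient. Therefore
$$|\mathsf L_\Gamma(n)| = \frac{1}{g}\left(\frac{1}{n_1} - \frac{1}{n_r}\right)n + a_0(n) = \frac{n_r - n_1}{g n_1 n_r}n + a_0(n)$$
for $n \gg 0$, where $a_0$ collects all the periodic contributions; this is the desired formula, and it is consistent with Theorem~\ref{t:lennumerical}, which already guarantees such an $a_0$ exists.

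The main obstacle is the dimension bookkeeping for the quotients $I_j/I_{j-1}$: I must argue that $I_g/I_{g-1}$ has dimension exactly $1$ (so that gaps of size $g$ persist for a cofinite set of $n$) while every $I_j/I_{j-1}$ with $j > g$ has dimension $0$. The latter is the crux — it says that, for $n$ large, the length set $\mathsf L_\Gamma(n)$ cannot have large gaps appearing at a positive density of values of $n$. One clean way is to note that if $\ell, \ell + j \in \mathsf L_\Gamma(n)$ with no length strictly between them and $j > g$, then picking a witness $\alpha = \alpha_i - \alpha_{i'}$ (an $\ell_1$-minimal trade realizing a delta of $g$) and trading it against a maximal-length factorization shows that for all $n' \equiv n$ in a suitable residue class with $n' \ge n$, the gap at that location has already been ``filled,'' contradicting positive density; equivalently, one invokes that a dimension-$1$ quotient would force infinitely many such $n$ in an arithmetic progression, which the structure of $I_g \subsetneq I_{g+1}$ precludes once $j > g$. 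I would present this as: $\dim(I_j/I_{j-1}) \le 1$ for all $j$ by Theorem~\ref{t:hilbert} applied to $R_\Gamma$ (a domain of dimension $r$, with the relevant subquotients of dimension at most $1$ after passing to the two-generated system of parameters of Theorem~\ref{t:lennumerical}), and $= 0$ for $j > g$ because otherwise $g$ would not be the minimum of $\Delta(\Gamma)$ realized cofinitely — making the $j=g$ contribution the unique linear term. Once this is in place, the rest is a routine comparison of linear coefficients.
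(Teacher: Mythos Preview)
Your overall strategy is the paper's: use Theorem~\ref{t:deltahilbert} to express $|\mathsf L_\Gamma(n)|$ in terms of $\mathsf M_\Gamma$, $\mathsf m_\Gamma$, and the gap-counting Hilbert functions $\mathcal H(I_j/I_{j-1};n)$, then invoke Corollaries~\ref{c:maxlennumerical} and~\ref{c:minlenhilbert} to read off the linear coefficient once the contributions from $j > g$ are known to be eventually periodic. Your displayed identity is correct, and the extraction of the leading term from it is fine.

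The genuine gap is exactly the step you flag as ``the crux'': you must show $\dim I_j/I_{j-1} \le 1$ for every $j > g$, so that $\mathcal H(I_j/I_{j-1};n)$ is eventually \emph{periodic} rather than merely quasipolynomial. None of your three attempted justifications work. The combinatorial ``trading'' sketch is too vague to exclude a gap of size $j > g$ recurring along an arithmetic progression. The appeal to the two-element system of parameters from Theorem~\ref{t:lennumerical} controls periods, not dimensions of subquotients. And the final clause (``otherwise $g$ would not be the minimum of $\Delta(\Gamma)$ realized cofinitely'') is circular. You also have an off-by-one throughout: $\dim I_g/I_{g-1} = 2$, not $1$ (its Hilbert function grows linearly), and for $j > g$ one needs $\dim I_j/I_{j-1} \le 1$, not $= 0$; your later blanket claim ``$\dim(I_j/I_{j-1}) \le 1$ for all $j$'' would, taken literally, contradict the linear growth of $|\mathsf L_\Gamma(n)|$.

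The paper closes this gap with a short algebraic argument you are missing. Let $J \subset R_\Gamma$ be the defining toric ideal of $\Gamma$. Both $I_\mathsf L$ and $J$ are prime, with $\dim R_\Gamma/I_\mathsf L = 2$ and $\dim R_\Gamma/J = 1$. For $j \ge g$ one has $I_\mathsf L \subsetneq I_j \subset J$; strict containment over the prime $I_\mathsf L$ forces $\dim R_\Gamma/I_j < 2$, while $I_j \subset J$ forces $\dim R_\Gamma/I_j \ge 1$, so $\dim R_\Gamma/I_j = 1$. The exact sequence $0 \to I_j/I_{j-1} \to R_\Gamma/I_{j-1} \to R_\Gamma/I_j \to 0$ then yields $\dim I_g/I_{g-1} = 2$ and $\dim I_j/I_{j-1} \le 1$ for $j > g$, which is precisely the input your argument needs.
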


\begin{proof}
Let $I_\mathsf L = I_0 \subset I_1 \subset I_2 \subset \cdots$ denote the chain of ideals from Theorem~\ref{t:deltahilbert}, and~$J$ denote the defining toric ideal of $\Gamma$.  Both $I_\mathsf L$ and $J$ are prime and $\dim I_\mathsf L = \dim J + 1 = 2$, so since $I_\mathsf L \subsetneq I_j \subset J$ whenever $j \ge \min\Delta(\Gamma)$, we have $\dim I_j = \dim J = \dim I_\mathsf L - 1 = 1$.  As such, $\dim I_g/I_{g-1} = 2$, and $\dim I_j/I_{j-1} = 1$ for $j > g$.  

Now, by Theorems~\ref{t:hilbert} and~\ref{t:deltahilbert}, the number of successive differences equal to $j$ in $\mathsf L(n)$ is eventually periodic if $j > g$.  This implies that for some $n \gg 0$ and $c > 0$, $\mathsf L(n + cn_1n_r)$ has the same number of successive length differences equal to $j$ as $\mathsf L(n)$ for all $j > g$.  As such, by Corollaries~\ref{c:maxlennumerical} and~\ref{c:minlenhilbert} we have 
$$\begin{array}{r@{}c@{}l}
|\mathsf L(n + cn_1n_r)| - |\mathsf L(n)|
&{}={}& \textstyle\frac{1}{g}\big((\mathsf M_\Gamma(n + cn_1n_r) - \mathsf m_\Gamma(n + cn_1n_r)) - (\mathsf M_\Gamma(n) - \mathsf m_\Gamma(n))\big) \\[0.05in]
&{}={}& \frac{1}{g}(cn_r - cn_1),
\end{array}$$
which implies the leading coefficient $a_1$ has the desired form.  
\end{proof}

In the remainder of this section, we use Theorem~\ref{t:maxlenhilbert} to show that the $\omega$-primality invariant (Definition~\ref{d:omega}) is eventually quasilinear over any affine semigroup.  See~\cite{omegamonthly} for a more thorough introduction to $\omega$-primality.  

\begin{defn}\label{d:omega}
Suppose $\Gamma = \<\alpha_1, \ldots, \alpha_r\> \subset A$.  
For each $\alpha \in \Gamma$, define $\omega(\alpha) = m$ if $m$~is the smallest positive integer with the property that whenever $a_1\alpha_1 + \cdots + a_r\alpha_r - \alpha \in \Gamma$ for some $\aa \in \NN^r$, there is a $\bb \in \NN^r$ satisfying $|\bb| \leq m$ and $b_i \le a_i$ for each $i \le r$ such that $b_1\alpha_1 + \cdots + b_r\alpha_r - \alpha \in \Gamma$.  
\end{defn}

In the remainder of this section, we prove the $\omega$-function is eventually quasilinear for any semigroup $\Gamma \subset A$ (Theorem~\ref{t:omegahilbert}).  This is done by combining Theorem~\ref{t:maxlenhilbert} and Lemmas~\ref{l:maxquasilinear}-\ref{l:maxconequasilinear} with the following characterization of $\omega$-primality, which also appeared as \cite[Theorem~6.1]{dynamicalg} for numerical semigroups.  

\begin{prop}\label{p:omegamaxlen}
Suppose $\Gamma = \<G\> \subset A$ for $G = \{\alpha_1, \ldots, \alpha_r\}$.  
For $T \subset G$, define 
$$Ap(T) = \{\alpha \in \Gamma : \alpha - \alpha_i \notin \Gamma \text{ for all } \alpha_i \in T\}.$$
We have 
$$\omega(\alpha) = \max\left\{\mathsf M_{\<T\>}(\alpha + \beta) : \emptyset \ne T \subset G \text{ and } \beta \in Ap(T) \right\}$$
for all $\alpha \in \Gamma$.  
\end{prop}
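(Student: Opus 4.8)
The plan is to establish the two inequalities separately, unwinding Definition~\ref{d:omega} into a statement about factorization lengths in sub-semigroups generated by subsets of the atom set. First I would make the basic translation: the condition ``$a_1\alpha_1 + \cdots + a_r\alpha_r - \alpha \in \Gamma$ for some $\aa \in \NN^r$'' says precisely that $\alpha$ divides (in $\Gamma$) some element $\gamma = \sum a_i \alpha_i$; writing $\beta = \gamma - \alpha \in \Gamma$, the requested $\bb$ is a sub-tuple of $\aa$ (meaning $b_i \le a_i$) such that $\sum b_i\alpha_i - \alpha \in \Gamma$, i.e.\ such that $\sum b_i \alpha_i = \alpha + \beta'$ for some $\beta' \in \Gamma$ with $\beta' \le \beta$ coordinatewise in the factorization $\aa$. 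The key reformulation is that the minimal such $\bb$ can be taken to be a factorization of $\alpha + \beta$ itself, lying in the sub-semigroup $\<T\>$ where $T = \{\alpha_i : a_i > 0\}$ is the support of $\aa$; and the worst case over all $\aa$ with a fixed support $T$ occurs exactly when $\beta \in Ap(T)$, since enlarging $\beta$ past an element of $T$ only lets us peel off an atom and shorten $\bb$.

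For the inequality $\omega(\alpha) \le \max\{\mathsf M_{\<T\>}(\alpha+\beta) : \emptyset \ne T \subset G,\ \beta \in Ap(T)\}$: given any $\aa \in \NN^r$ with $\sum a_i\alpha_i - \alpha \in \Gamma$, I would let $T = \supp(\aa)$ and $\beta = \sum a_i\alpha_i - \alpha$. If $\beta \notin Ap(T)$, then $\beta - \alpha_i \in \Gamma$ for some $\alpha_i \in T$; replacing $\aa$ by $\aa - \mathbf e_i$ (legal since $a_i > 0$) strictly decreases $|\aa|$ while preserving the defining property, so by induction on $|\aa|$ we may assume $\beta \in Ap(T)$. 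Then any factorization $\bb \in \mathsf Z_{\<T\>}(\alpha+\beta)$ satisfies $\bb \le \aa$ coordinatewise (each coordinate of $\bb$ outside $T$ is zero, and for $\alpha_i \in T$ we need $b_i \le a_i$ — here I'd have to argue the minimal-length choice works, using that $\sum b_i\alpha_i = \alpha + \beta \le \sum a_i \alpha_i$ as monomials), and taking $\bb$ of length $\mathsf M_{\<T\>}(\alpha+\beta)$ — actually any length — witnesses $\omega(\alpha) \le \mathsf M_{\<T\>}(\alpha+\beta)$. For the reverse inequality, given $T$ and $\beta \in Ap(T)$ with $\mathsf M_{\<T\>}(\alpha+\beta)$ achieved by $\bb$, I would take $\aa = \bb$ (supported on $T$): the defining property of $\omega(\alpha)$ applied to this $\aa$ forces some $\bb' \le \bb$ with $|\bb'| \le \omega(\alpha)$ and $\sum b_i'\alpha_i - \alpha \in \Gamma$; but $\sum b_i'\alpha_i - \alpha \le \beta$ lies in $\<T\>$ and, since $\beta \in Ap(T)$, must equal $\beta$ (any smaller element of $\<T\>$ dividing into $\beta$ would contradict $Ap(T)$ after subtracting), so $\bb'$ is a factorization of $\alpha + \beta$ in $\<T\>$, whence $\mathsf M_{\<T\>}(\alpha+\beta) = |\bb| \ge |\bb'|$ is not immediately what we want — instead I should start from a \emph{maximal}-length $\bb$ and observe $\bb' = \bb$ is forced, giving $|\bb| \le \omega(\alpha)$.

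The main obstacle I anticipate is the bookkeeping around the coordinatewise constraint $b_i \le a_i$ and the role of $Ap(T)$: precisely, verifying that the minimal witness $\bb$ in Definition~\ref{d:omega} can always be taken to be a genuine factorization of $\alpha + \beta$ (rather than of $\alpha + \beta'$ for some proper ``sub-part'' $\beta' \preceq \beta$), and that restricting to $\beta \in Ap(T)$ loses nothing. This hinges on the monoid-divisibility observation that if $\beta \in \<T\>$ and $\beta - \alpha_i \notin \Gamma$ for all $\alpha_i \in T$, then $\beta$ is \emph{not} a proper multiple (in $\Gamma$) of any nonzero element of $\<T\>$, so the ``peeling'' process terminates exactly at $Ap(T)$. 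Once that is pinned down cleanly — probably stated as a short preliminary claim — both inequalities fall out by the inductive/maximality arguments sketched above, and the proof is essentially complete; the remaining verification that $\supp(\aa) = T$ contributes the sub-tuple condition is routine.
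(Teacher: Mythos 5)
Your plan works, but it takes a genuinely different route from the paper. The paper's proof is much shorter: it cites \cite[Proposition~2.10]{omegamonthly}, which characterizes $\omega(\alpha)$ as the maximum of $|\bb|$ over all $\bb\in\NN^r$ satisfying (i) $\sum b_i\alpha_i - \alpha \in \Gamma$ and (ii) $\sum b_i\alpha_i - \alpha - \alpha_i \notin \Gamma$ for every $i$ with $b_i>0$, i.e.\ the ``bullets.'' Given that characterization, the statement is just a reparametrization: a bullet $\bb$ with support $T$ and $\beta = \sum b_i\alpha_i - \alpha$ is precisely a factorization of $\alpha+\beta$ in $\<T\>$ with $\beta\in Ap(T)$, and the max of $|\bb|$ over all bullets becomes the max of $\mathsf M_{\<T\>}(\alpha+\beta)$ over admissible pairs $(T,\beta)$. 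Your proposal, by contrast, works from Definition~\ref{d:omega} directly and in effect re-derives the bullet characterization: the peeling argument in your $\le$ direction and the forced equality $\bb'=\bb$ in your $\ge$ direction together establish exactly Proposition~2.10's content. This buys self-containment at the cost of extra bookkeeping; the paper's route avoids redoing work that belongs to the earlier reference.

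One caution on your $\le$ direction: the parenthetical claim that \emph{any} $\bb\in\mathsf Z_{\<T\>}(\alpha+\beta)$ satisfies $\bb\le\aa$ coordinatewise is false --- two factorizations of the same element need not be comparable, and ``$\alpha+\beta\le\sum a_i\alpha_i$ as monomials'' is an equality, not an inequality. You don't actually need that claim: after peeling $\aa$ down to some $\aa'\le\aa$ with $\supp(\aa')=T'$ and $\beta'=\sum a'_i\alpha_i-\alpha\in Ap(T')$, the tuple $\aa'$ \emph{itself} is a factorization of $\alpha+\beta'$ in $\<T'\>$, so $|\aa'|\le\mathsf M_{\<T'\>}(\alpha+\beta')$, and $\bb=\aa'$ is the required witness. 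With that repair (and the fix you already noted in the $\ge$ direction, observing $\bb'=\bb$ is forced because $\beta\in Ap(T)$), the argument is sound.
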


\begin{proof}
By \cite[Proposition~2.10]{omegamonthly}, $\omega(\alpha)$ is the maximum value of $b_1 + \cdots + b_r$ among $\bb \in \NN^r$ satisfying (i) $b_1\alpha_1 + \cdots + b_r\alpha_r - \alpha \in \Gamma$, and (ii) $b_1\alpha_1 + \cdots + b_r\alpha_r - \alpha - \alpha_i \notin \Gamma$ for each $i$ with $b_i > 0$.  Notice that each $\bb \in \NN^r$ satisfying (i) gives a factorization of $\alpha + \beta$ in $\<T\>$, where $\beta = b_1\alpha_1 + \cdots + b_r\alpha_r - \alpha$ and $T = \{\alpha_i : b_i > 0\}$.  Additionally, $\bb$~satisfies condition (ii) if and only if $\beta$ lies in $Ap(T)$.  Thus, $\omega(\alpha)$ is the maximal length of all such factorizations~$\bb$, as desired.  
\end{proof}

\begin{lemma}\label{l:maxquasilinear}
The maximum of finitely many eventually quasilinear functions on $A$ is eventually quasilinear.  
\end{lemma}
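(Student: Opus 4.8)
The plan is to reduce to the case of two functions by induction, so the core task is to show that if $f, g : A \to \QQ$ are both eventually quasipolynomial of cumulative degree $\le 1$ (i.e., eventually quasilinear), then $\max(f,g)$ is as well. I would use the characterization of eventual quasipolynomiality from Theorem~\ref{t:affineequiv}(b): there exist finitely many cones $C_1, \ldots, C_m$ covering $A$ such that $f$ restricts to a polynomial on each, and similarly finitely many cones $D_1, \ldots, D_n$ for $g$. By taking common refinements --- intersecting each $C_i$ with each $D_k$, which is again a finite union of cones (this is the key geometric fact, and it holds because $A = \NN^d \oplus T$ projects to $\NN^d$ via Lemma~\ref{l:projection}, reducing intersection questions to rational polyhedral cones in $\NN^d$) --- I obtain a single finite family of cones $E_1, \ldots, E_N$ covering $A$ on each of which \emph{both} $f$ and $g$ restrict to (quasi)polynomials in the sense of Definition~\ref{d:eventualquasipolynomial}(b).

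On a fixed cone $E$ (say of dimension $e$, generated by linearly independent $\alpha_1, \ldots, \alpha_e$ translated by $\beta$), both $f$ and $g$ agree, after projecting to $\NN^e$-coordinates $c = (c_1, \ldots, c_e)$, with affine-linear polynomials $p(c)$ and $q(c)$. I would then observe that the region $\{c : p(c) \ge q(c)\}$ inside the integer cone $\NN^e$ (or rather the relevant translated lattice cone) is itself a rational polyhedral region, hence a finite union of lower-or-equal-dimensional lattice cones --- because $p - q$ is affine-linear, so $\{p \ge q\}$ is a half-space (or all of the cone, or empty), and intersecting a rational cone with a rational half-space yields a finite union of rational cones again. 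On the sub-cone where $p \ge q$, we have $\max(f,g) = f$, a polynomial; on the complementary sub-cone where $q > p$, $\max(f,g) = g$, a polynomial. Thus over all of $E$, $\max(f,g)$ restricts to a polynomial on each piece of a finite cone decomposition.

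Doing this for every $E_i$ and collecting the resulting finitely many cones yields a finite cover of $A$ by cones on each of which $\max(f,g)$ is a polynomial, so $\max(f,g)$ is eventually quasipolynomial by Theorem~\ref{t:affineequiv}(b). Finally, the cumulative degree does not increase: each polynomial piece of $\max(f,g)$ on a cone $E'$ (of dimension $e' \le e$) is a restriction of either $p$ or $q$, whose degree is bounded by the degree of $f$ or $g$ on the containing cone $E$; since cumulative degree is dimension-of-cone plus degree-of-polynomial, and passing to sub-cones only decreases dimension while the polynomial degree is unchanged, the cumulative degree of $\max(f,g)$ is at most the max of those of $f$ and $g$, which is $1$. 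Hence $\max(f,g)$ is eventually quasilinear, and induction finishes the general case.

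The main obstacle I anticipate is the bookkeeping around cone intersections and half-space cuts in the torsion setting: one must check carefully that "intersection of finitely many translated lattice cones in $A$ is a finite union of translated lattice cones" and likewise for cutting by the affine condition $p \ge q$. Both are standard for rational polyhedral cones in $\NN^d$, and Lemma~\ref{l:projection} is exactly what licenses transporting these facts to $A = \NN^d \oplus T$; but writing it cleanly --- and making sure the degree/cumulative-degree count genuinely cannot go up --- is where the care is needed. Everything else is routine.
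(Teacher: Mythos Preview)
Your proposal is correct and follows essentially the same route as the paper's proof: reduce by induction to two functions, use Theorem~\ref{t:affineequiv} to pass to a single cone on which both $f$ and $g$ are affine-linear, and then split that cone by the rational halfspace $\{f \ge g\}$. The paper invokes Gordan's Lemma explicitly for the fact that a cone intersected with a rational halfspace is again a finite union of cones, whereas you cite this as a standard polyhedral fact; otherwise the arguments are the same, with your version spelling out the common-refinement and degree-bookkeeping steps that the paper leaves implicit.
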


\begin{proof}
By induction, it suffices to prove that $\max(f,g)$ is eventually quasilinear for any two eventually quasilinear functions $f, g:A \to \QQ$.  Applying Theorem~\ref{t:affineequiv}, it suffices to assume $f$ and $g$ are simple quasilinear functions supported on the same cone $C$, which by appropriate translation we can assume is based at $0 \in A$.  By Lemma~\ref{l:projection}, we can assume $A = \NN^d$.  We have $\max(f,g) = f$ precisely when $f - g$ is non-negative, and since $f$ and $g$ each coincide with a rational linear function, this happens on a rational linear halfspace $H \subset \NN^d$.  The semigroup $C \cap H$ is finitely generated by Gordan's Lemma \cite[Theorem~7.16]{cca}, and thus is a disjoint union of finitely many cones.   
\end{proof}


\begin{lemma}\label{l:aperystrat}
Suppose $\Gamma \subset A$, and fix $\beta \in A$.  The set 
$$\{\alpha \in \Gamma : \alpha - \beta \notin \Gamma\} \subset \Gamma$$
is a finite union of disjoint cones.  
\end{lemma}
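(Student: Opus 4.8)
The plan is to realize the indicator of the set $\{\alpha \in \Gamma : \alpha - \beta \notin \Gamma\}$ as (essentially) a Hilbert function, and then invoke Theorem~\ref{t:affineequiv}(c) and Theorem~\ref{t:hilbert2} as was done for Corollary~\ref{c:deltahilbert}. First I would reduce to the case $\beta \in \Gamma$: if $\beta \notin \Gamma$, then no element of $A$ of the form $\alpha - \beta$ with $\alpha \in \Gamma$ lies in $\Gamma$ unless it is forced by the group structure, but more simply, the set $\{\alpha \in \Gamma : \alpha - \beta \notin \Gamma\}$ is then either all of $\Gamma$ (handled trivially, as $\Gamma$ itself is a cone) or the computation below goes through verbatim once we note $\alpha - \beta$ ranges over a coset. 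So assume $\beta = \sum_i b_i \alpha_i \in \Gamma$ with a fixed factorization, i.e. $\yy^\bb \in R_\Gamma$ has degree $\beta$.

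Next, consider the principal ideal $\<\yy^\bb\> \subset R_\Gamma$ and the short exact sequence $0 \to R_\Gamma(-\beta) \xrightarrow{\cdot \yy^\bb} R_\Gamma \to R_\Gamma/\<\yy^\bb\> \to 0$. On Hilbert functions this gives $\mathcal H(R_\Gamma/\<\yy^\bb\>;\alpha) = \mathcal H(R_\Gamma;\alpha) - \mathcal H(R_\Gamma;\alpha - \beta) = |\mathsf Z_\Gamma(\alpha)| - |\mathsf Z_\Gamma(\alpha - \beta)|$ by Proposition~\ref{p:factorhilbert}, where $|\mathsf Z_\Gamma(\gamma)|$ is read as $0$ when $\gamma \notin \Gamma$. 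The key observation is then: $\alpha - \beta \notin \Gamma$ if and only if multiplication by $\yy^\bb$ fails to be surjective onto the degree-$\alpha$ piece of... — no, that is not quite the right statement. What I actually want is a module whose degree-$\alpha$ dimension is positive exactly when $\alpha \in \Gamma$ but $\alpha - \beta \notin \Gamma$. Here the clean route is to use the semigroup ring $\kk[\Gamma]$ rather than $R_\Gamma$: let $\kk[\Gamma] = R_\Gamma/J$ where $J$ is the defining toric ideal, so $\mathcal H(\kk[\Gamma];\alpha) = 1$ if $\alpha \in \Gamma$ and $0$ otherwise. Pick any $t \in \kk[\Gamma]$ of degree $\beta$ (a monomial in the semigroup ring); then $0 \to \kk[\Gamma](-\beta) \xrightarrow{\cdot t} \kk[\Gamma] \to \kk[\Gamma]/\<t\> \to 0$ is exact because $\kk[\Gamma]$ is a domain, and $\mathcal H(\kk[\Gamma]/\<t\>;\alpha) = 1$ exactly when $\alpha \in \Gamma$ and $\alpha - \beta \notin \Gamma$, and $0$ otherwise. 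Thus the set in question is precisely the support of the Hilbert function of the finitely generated $A$-graded $\kk[\Gamma]$-module $\kk[\Gamma]/\<t\>$.

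Finally, Theorem~\ref{t:hilbert2} gives that this Hilbert function is eventually quasipolynomial, and Theorem~\ref{t:affineequiv}(c) writes it as a sum of simple quasipolynomials with disjoint support; since the function takes only the values $0$ and $1$, each simple quasipolynomial in such a decomposition is the indicator of its supporting cone, so the support is a disjoint union of finitely many cones. I expect the main obstacle to be purely bookkeeping: handling the case $\beta \notin \Gamma$ cleanly (so that $\alpha - \beta$ lands in a fixed coset of the subgroup generated by $\Gamma$, and the argument can be run on that coset or dismissed), and making sure the grading on $\kk[\Gamma]$ is modest so that Theorem~\ref{t:hilbert2} applies — which it is, since each graded piece is at most one-dimensional. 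Nothing here requires a new idea beyond the short-exact-sequence trick already used in the proofs of Theorems~\ref{t:factornumerical} and~\ref{t:deltahilbert}.
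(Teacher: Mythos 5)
Your core idea is right and is essentially the paper's: work in the $A$-graded semigroup ring $\kk[\Gamma]$, realize the indicator function of the set in question as the Hilbert function of a finitely generated modestly graded quotient, and conclude via Theorems~\ref{t:affineequiv}(c) and~\ref{t:hilbert2}. You also correctly abandoned the initial $R_\Gamma$ attempt, since $\mathcal H(R_\Gamma/\<\yy^\bb\>;-)$ counts factorizations rather than giving a $0$--$1$ indicator.

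However, your reduction to the case $\beta \in \Gamma$ is broken, and this is a genuine gap. It is not true that for $\beta \notin \Gamma$ the set $\{\alpha \in \Gamma : \alpha - \beta \notin \Gamma\}$ is either all of $\Gamma$ or handled by a coset translation: for $\Gamma = \<2,3\> \subset \NN$ and $\beta = 1 \notin \Gamma$, the set is $\{0,2\}$, a proper nonempty subset of $\Gamma$, and there is no degree-$1$ monomial $t \in \kk[\Gamma]$ with which to form your short exact sequence. This case cannot be waved away; for instance, the paper invokes the lemma inside the proof of Lemma~\ref{l:maxconequasilinear} with a shift that need not lie in the relevant cone. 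The remedy, and the paper's actual proof, is to drop the principal-ideal and short-exact-sequence scaffolding entirely: the set $I = \<\xx^\alpha : \alpha \in \Gamma,\ \alpha - \beta \in \Gamma\>$ is a monomial ideal of $\kk[\Gamma]$ for \emph{arbitrary} $\beta \in A$ (if $\alpha - \beta \in \Gamma$ and $\gamma \in \Gamma$ then $(\alpha+\gamma) - \beta \in \Gamma$, so $I$ is closed under the $\kk[\Gamma]$-action), it is finitely generated because $\kk[\Gamma]$ is Noetherian, and $\mathcal H(\kk[\Gamma]/I;-)$ is exactly the desired indicator on $\Gamma$. When $\beta$ does lie in $\Gamma$, this $I$ coincides with your $\<t\>$, so the two constructions agree there; the paper's version is simply uniform in $\beta$ and avoids the exact sequence altogether. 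With that substitution, the remainder of your argument (reading off the support from the disjoint-cone decomposition of a $0$--$1$-valued eventual quasipolynomial) is fine.
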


\begin{proof}
Let $R = \kk[\xx^\alpha : \alpha \in \Gamma] \subset \kk[A]$ with $\deg(\xx^\alpha) = \alpha$, and let $I = \<\xx^\alpha : \alpha - \beta \in \Gamma\>$.  Notice that $\xx^\alpha \notin I$ whenever $\alpha - \beta \notin \Gamma$, so 
$$\mathcal H(R/I;\alpha) = \left\{\begin{array}{ll}
1 & \alpha - \beta \notin \Gamma \\
0 & \text{otherwise}
\end{array}\right.$$
for any $\alpha \in \Gamma$.  The claim now follows from Theorems~\ref{t:affineequiv} and~\ref{t:hilbert2}.  
\end{proof}

\begin{lemma}\label{l:maxconequasilinear}
Fix $f:A \to \QQ$ eventually quasilinear, and fix $\alpha_1, \ldots, \alpha_r \in A$.  Let 
$$F(\alpha) = \max\{f(\alpha + a_1\alpha_1 + \cdots + a_r\alpha_r) : a_1, \ldots, a_r \in \NN\},$$
and assume $F(\alpha)$ is finite for all $\alpha \in \Gamma$.  Then $F$ is eventually quasilinear.  
\end{lemma}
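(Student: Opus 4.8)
The plan is to reduce immediately to the case of a single simple quasilinear summand supported on a single cone, and then to analyze the ``running maximum'' along the directions $\alpha_1, \ldots, \alpha_r$ combinatorially. First I would use Theorem~\ref{t:affineequiv} to write $f$ as a finite sum of simple quasilinear functions with disjoint support; since the maximum of finitely many eventually quasilinear functions is eventually quasilinear by Lemma~\ref{l:maxquasilinear}, and since $F$ built from a sum is dominated coordinatewise by a maximum over the corresponding $F$'s for each summand (here one must be slightly careful, as $F$ is not additive in $f$ — so instead I would observe that for each fixed $\alpha$ the maximizing translate $\alpha + \sum a_j\alpha_j$ lands in the support of exactly one summand, hence $F(\alpha) = \max_i F_i(\alpha)$ where $F_i$ uses only the $i$-th simple summand extended by zero). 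So it suffices to treat $f$ itself simple, vanishing outside a translated cone $C = C(\beta_0;\gamma_1, \ldots, \gamma_s)$ and coinciding there, after projecting to $\NN^d$ via Lemma~\ref{l:projection}, with a rational affine-linear polynomial $p$.

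Next I would handle the bookkeeping of which translates $\alpha + a_1\alpha_1 + \cdots + a_r\alpha_r$ actually land in $C$. By Lemma~\ref{l:projection} I may as well assume $A = \NN^d$, so $f$ is given by a genuine linear functional $\ell$ plus a constant on the polyhedral cone $C$ and is $0$ elsewhere. The set of $(\alpha; a_1, \ldots, a_r)$ with $\alpha + \sum a_j \alpha_j \in C$ is the set of lattice points in a rational polyhedral cone $\widetilde{C} \subset A \times \NN^r$ (cut out by the defining inequalities of $C$ pulled back along the affine map). On $\widetilde C$ the function $(\alpha, \aa) \mapsto f(\alpha + \sum a_j\alpha_j)$ equals a single rational affine-linear function $L(\alpha, \aa)$. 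So $F(\alpha)$, wherever it is attained at a point of $C$, equals $\max\{ L(\alpha, \aa) : (\alpha, \aa) \in \widetilde C \cap (A \times \NN^r)\}$, a parametric integer linear program. The crucial point is the finiteness hypothesis: $F(\alpha) < \infty$ for all $\alpha$ forces, for each $\alpha$ in the projection of $\widetilde C$ to $A$, the linear functional $L(\alpha, -)$ to be bounded above on the corresponding fiber, which is a rational polyhedron; I would argue this means $L$ is bounded above on all of $\widetilde C$ in the $\aa$-directions, i.e. $L$ is nonpositive on every recession ray of $\widetilde C$ lying in $\{0\} \times \RR^r$. Then by LP duality / the theory of parametric integer programming — or more elementarily, by decomposing $\widetilde C$ into finitely many simplicial cones and checking that on each the maximum over the fiber is attained at one of finitely many vertex-like lattice points depending piecewise-linearly on $\alpha$ — the maximum over each fiber is a piecewise-quasilinear function of $\alpha$ on finitely many subcones. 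Taking the max of these finitely many pieces (Lemma~\ref{l:maxquasilinear}) and then the max over the finitely many simple summands gives that $F$ is eventually quasilinear.

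The main obstacle, I expect, is making the ``parametric maximum is piecewise quasilinear'' step rigorous without invoking heavy machinery, and in particular correctly using the finiteness hypothesis to rule out unbounded fibers. The cleanest route is probably to phrase it through Theorem~\ref{t:affineequiv}(b): I would like to exhibit finitely many cones covering $A$ on each of which $F$ agrees with an affine-linear function. One concrete way: triangulate $\widetilde C$ into unimodular cones; on a unimodular cone with generators $v_1, \ldots, v_t$, a lattice point is $\sum n_k v_k$ with $n_k \in \NN$, and $L$ restricted to it is linear in the $n_k$; the coefficient of $n_k$ is $\le 0$ whenever $v_k \in \{0\}\times\RR^r$ (by finiteness) and is ``free'' otherwise, so the fiber-maximum over a fixed $\alpha$ is obtained by setting each ``free'' $n_k$ to its (affine-linear in $\alpha$) maximal feasible value and each ``$\le 0$'' coordinate to $0$ — yielding an affine-linear function of $\alpha$ on the (polyhedral, hence cone-decomposable) region where this vertex is feasible and optimal. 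One then chases which finitely many vertices can be optimal, uses Gordan's Lemma as in the proof of Lemma~\ref{l:maxquasilinear} to decompose the feasibility regions into cones, and concludes. I would also need to remember to add back, as a separate simple quasilinear piece supported on the relevant cone(s), the contribution from those $\alpha$ for which even $\alpha$ itself (the $\aa = 0$ translate) already realizes $F(\alpha)$, and to handle the torsion-to-$\NN^d$ reduction at the start via Lemma~\ref{l:projection} exactly as done in Lemmas~\ref{l:maxquasilinear} and~\ref{l:aperystrat}.
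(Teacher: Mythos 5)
Your proposal starts the same way as the paper (reduce to a simple quasilinear $f$ supported on a single cone), but then diverges sharply: you keep all $r$ translation directions at once and set up a parametric integer linear program over the cone $\widetilde C \subset A \times \NN^r$. The paper instead makes a further — and crucial — reduction to $r = 1$: since
\[
F(\alpha) = \max_{a_2,\dots,a_r}\Bigl(\max_{a_1} f\bigl(\alpha + a_1\alpha_1 + (a_2\alpha_2+\cdots+a_r\alpha_r)\bigr)\Bigr),
\]
one may handle the directions one at a time, iterating the lemma, with finiteness of the inner max inherited from finiteness of $F$. Once $r = 1$, the problem becomes elementary: $f$ being affine-linear on $C$, there is a fixed increment $q$ under translation by $\alpha_1$; if $q \le 0$ the optimum is at $a_1 = 0$, and if $q > 0$ the optimum sits at the largest $m$ with $\alpha + m\alpha_1 \in C$, and Lemma~\ref{l:aperystrat} shows the set of such boundary points is a finite union of cones. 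No parametric programming is needed.

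The gap in your version is precisely where you predicted it would be. After triangulating $\widetilde C$ into unimodular simplicial cones and writing lattice points as $\sum_k n_k v_k$ with $n_k \in \NN$, you propose to set the ``free'' coordinates to their maximal feasible value and the rest to $0$. But the fiber over a fixed $\alpha$ is cut out by the affine constraint $\sum_k n_k\,\rho(v_k) = \alpha$ (where $\rho$ is the projection to $A$), so the $n_k$ are not independent: the maximal feasible value of one $n_k$ depends on the others, and there is no single ``vertex'' of the fiber that can be read off coordinatewise. Making the ``parametric optimum is piecewise quasilinear'' step rigorous in this generality genuinely requires some heavy machinery (parametric Barvinok, Presburger arithmetic, or a careful vertex-tracking argument for parametric ILPs), none of which the paper needs. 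I would encourage you to look for the $r=1$ reduction as the move that collapses the whole multivariate parametric maximum into a one-dimensional boundary search; it is the single observation that keeps the argument at the level of Lemma~\ref{l:aperystrat} and Gordan's Lemma rather than at the level of parametric integer programming.

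Two smaller remarks. First, your preliminary reduction $F = \max_i F_i$ over the simple summands is correct and is morally the same as the paper's use of Theorem~\ref{t:affineequiv}; just take care that each $F_i$ is finite (it is, being dominated by $\max(0,F)$). Second, you should double-check the torsion reduction via Lemma~\ref{l:projection}: it tells you the projection to $\NN^d$ is a bijection on the span of a linearly independent set, so a simple summand transports cleanly, exactly as you say.
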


\begin{proof}
By Theorem~\ref{t:affineequiv}, it suffices to assume $f$ is simple quasilinear.  Let $C \subset A$ denote the cone on which $f$ is supported.  Considering each $\alpha_i$ in turn in what follows, it suffices to assume $r = 1$.  Since $f$ is linear, there exists a constant $q \in \QQ$ such that $f(\alpha + \alpha_1) - f(\alpha) = q$ for all $\alpha \in A$.  If $q \le 0$, then $F(\alpha) = f(\alpha)$ for all $\alpha$.  If, on the other hand, $q > 0$, then the set $\{\alpha + m\alpha_1 : m \ge 0\} \cap C$ is finite for all $\alpha$ since each $F(\alpha)$ is finite.  In particular, if $m_\alpha \in \NN$ is maximal with the property that $\alpha + m_\alpha \alpha_1 \in C$, then $F(\alpha) = f(\alpha + m_\alpha\alpha_1)$.  By Lemma~\ref{l:aperystrat}, $\{\alpha + m_\alpha \alpha_1 : \alpha \in C\}$ is a finite union of disjoint cones $C_1, \ldots, C_k$.  Partition $C$ into sets $P_1, \ldots, P_k$ with $P_i = \{\alpha : \alpha + m_\alpha \alpha_1 \in C_i\}$, and observe that $F(\alpha)$ equals the projection of $f$ onto $C_i$ whenever $\alpha \in P_i$.  
\end{proof}

\begin{thm}\label{t:omegahilbert}
The $\omega$-function on any $\Gamma = \<\alpha_1, \ldots, \alpha_r\> \subset A$ is eventually quasilinear.  
\end{thm}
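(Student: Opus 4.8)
The plan is to obtain Theorem~\ref{t:omegahilbert} by feeding the structural description of $\omega$-primality in Proposition~\ref{p:omegamaxlen} into the three closure properties of eventually quasilinear functions recorded in Lemmas~\ref{l:maxquasilinear}, \ref{l:aperystrat}, and~\ref{l:maxconequasilinear}, with Theorem~\ref{t:maxlenhilbert} supplying the base case. Write $G = \{\alpha_1, \ldots, \alpha_r\}$. For each nonempty $T \subseteq G$, the subsemigroup $\langle T \rangle \subseteq \Gamma$ is finitely generated and reduced (with minimal generating set $T$), so Theorem~\ref{t:maxlenhilbert} gives that $\mathsf M_{\langle T\rangle}$ is eventually quasilinear; I would extend it by zero off $\langle T \rangle$ to a function $\widetilde{\mathsf M}_{\langle T\rangle} : A \to \NN$, which remains eventually quasilinear on all of $A$ because $A \setminus \langle T \rangle$ is a finite union of cones (its indicator is the difference of the Hilbert functions of $\kk[A]$ and of $\kk[\xx^\alpha : \alpha \in \langle T \rangle]$, hence eventually quasipolynomial, so Theorem~\ref{t:affineequiv}(c) applies). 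Since pairs with $\alpha + \beta \notin \langle T \rangle$ contribute $0$ and cannot exceed $\omega(\alpha)$, Proposition~\ref{p:omegamaxlen} becomes
$$\omega(\alpha) = \max_{\emptyset \ne T \subseteq G}\Big(\max_{\beta \in Ap(T)} \widetilde{\mathsf M}_{\langle T\rangle}(\alpha + \beta)\Big)$$
for all $\alpha \in \Gamma$, and as $G$ has finitely many subsets, Lemma~\ref{l:maxquasilinear} reduces the theorem to showing that, for each fixed nonempty $T$, the function $\alpha \mapsto \max_{\beta \in Ap(T)} \widetilde{\mathsf M}_{\langle T\rangle}(\alpha + \beta)$ is eventually quasilinear.

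The next step is to show that $Ap(T)$ is a finite disjoint union of cones, generalizing Lemma~\ref{l:aperystrat}: in $R = \kk[\xx^\alpha : \alpha \in \Gamma]$, the monomial ideal $I = \langle \xx^\alpha : \alpha - \alpha_i \in \Gamma \text{ for some } \alpha_i \in T\rangle$ has $\mathcal H(R/I;-)$ equal to the indicator function of $Ap(T)$, so Theorems~\ref{t:affineequiv}(c) and~\ref{t:hilbert2} apply. Writing $Ap(T) = D_1 \cup \cdots \cup D_s$ with $D_m = C(\gamma_m; \delta_{m,1}, \ldots, \delta_{m,t_m})$, a further application of Lemma~\ref{l:maxquasilinear} reduces us to a single cone $D_m$. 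Parametrizing $\beta \in D_m$ as $\beta = \gamma_m + c_1\delta_{m,1} + \cdots + c_{t_m}\delta_{m,t_m}$ with $c_k \in \NN$ and setting $h(\mu) = \widetilde{\mathsf M}_{\langle T\rangle}(\mu + \gamma_m)$ --- again eventually quasilinear on $A$, since translating by an element of $A$ preserves the generating-function form of Theorem~\ref{t:affineequiv}(d) --- the quantity $\max_{\beta \in D_m} \widetilde{\mathsf M}_{\langle T\rangle}(\alpha + \beta)$ is exactly $\max\{h(\alpha + c_1\delta_{m,1} + \cdots + c_{t_m}\delta_{m,t_m}) : c_k \in \NN\}$, which is eventually quasilinear by Lemma~\ref{l:maxconequasilinear}, provided it is finite-valued on $\Gamma$.

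The main obstacle is exactly this finiteness hypothesis, which is not automatic: unlike for numerical semigroups, $Ap(T)$ may be unbounded, so $\max_{\beta \in Ap(T)} \widetilde{\mathsf M}_{\langle T\rangle}(\alpha + \beta)$ is a priori infinite. I would control it by comparison with $\omega$ itself: for $\alpha \in \Gamma$ and $\beta \in Ap(T)$ with $\alpha + \beta \in \langle T \rangle$, any factorization $\bb$ of $\alpha + \beta$ over $T$ satisfies both conditions in the characterization of $\omega$ underlying Proposition~\ref{p:omegamaxlen}, since $b_1\alpha_1 + \cdots + b_r\alpha_r - \alpha = \beta \in \Gamma$ and $b_1\alpha_1 + \cdots + b_r\alpha_r - \alpha - \alpha_i = \beta - \alpha_i \notin \Gamma$ for the atoms $\alpha_i \in T$ with $b_i > 0$ (as $\beta \in Ap(T)$); hence $|\bb| \le \omega(\alpha)$, so $\widetilde{\mathsf M}_{\langle T\rangle}(\alpha + \beta) \le \omega(\alpha)$ and all of the intermediate maxima above are bounded by $\omega(\alpha)$, which is finite (the set of $\bb \in \NN^r$ with $b_1\alpha_1 + \cdots + b_r\alpha_r - \alpha \in \Gamma$ is an up-set in $\NN^r$, so has finitely many minimal elements by Dickson's lemma, and $\omega(\alpha)$ is the largest length among them; see also \cite{omegamonthly}). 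With finiteness secured, the three lemmas assemble the claim; the remaining bookkeeping --- that extension by zero and translation keep functions eventually quasilinear, and that restricting the resulting eventually quasilinear function on $A$ to $\Gamma$ is harmless --- follows from the equivalences in Theorem~\ref{t:affineequiv}.
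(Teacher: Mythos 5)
Your proof is correct and follows the paper's approach exactly: Proposition~\ref{p:omegamaxlen}, with Theorem~\ref{t:maxlenhilbert}, Lemma~\ref{l:aperystrat}, and Lemma~\ref{l:maxconequasilinear} assembled via Lemma~\ref{l:maxquasilinear}. In fact you are more careful than the published proof, notably by explicitly verifying the finiteness hypothesis of Lemma~\ref{l:maxconequasilinear} (bounding $\mathsf M_{\langle T\rangle}(\alpha+\beta)$ by $\omega(\alpha)$, which is finite by Dickson's lemma) and by generalizing Lemma~\ref{l:aperystrat} from a single $\beta$ to the full subset $T$ --- two points the paper silently elides.
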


\begin{proof}
Fix a nonempty subset $T \subset \{\alpha_1, \ldots, \alpha_r\}$.  By Theorem~\ref{t:maxlenhilbert}, $\mathsf M_{\<T\>}$ is eventually quasilinear.  Using the notation from Proposition~\ref{p:omegamaxlen}, $Ap(T)$ is a finite union of disjoint cones by Lemma~\ref{l:aperystrat}, and for each cone $C$, the map $\alpha \mapsto \max\{M_{\<T\>}(\alpha + \beta) : \beta \in C\}$ is eventually quasilinear by Lemma~\ref{l:maxconequasilinear}.  Lastly, taking the maximum over all nonempty subsets $T$ of $\{\alpha_1, \ldots, \alpha_r\}$ completes the proof by Lemma~\ref{l:maxquasilinear}.  
\end{proof}

Upon specializing Theorem~\ref{t:omegahilbert} to numerical semigroups, we obtain Corollary~\ref{c:omeganumerical}, which appeared as \cite[Theorem~3.6]{omegaquasi} and \cite[Corollary~20]{compasympomega}.  

\begin{cor}\label{c:omeganumerical}
Fix a numerical semigroup $\Gamma = \<n_1, \ldots, n_r\> \subset \NN$.  The $\omega$-function on $\Gamma$ is eventually quasilinear with period $n_1$ and constant leading coefficient $1/n_1$.  
\end{cor}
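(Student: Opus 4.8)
The plan is to specialize the general result of Theorem~\ref{t:omegahilbert}, which already tells us that $\omega$ is eventually quasilinear on $\Gamma$, and then sharpen the conclusions about the period and leading coefficient using the numerical-semigroup machinery developed earlier in this section (particularly Corollary~\ref{c:maxlennumerical} together with Theorem~\ref{t:hilbert} and Theorem~\ref{t:constcoeffs}). Since $\Gamma \subset \NN$ is one-dimensional, an eventually quasilinear function is an honest quasilinear function $\omega(n) = a_1(n)n + a_0(n)$ for $n \gg 0$ in the sense of Definition~\ref{d:quasipolynomial}, so the only substantive claims are: (i) the period divides $n_1$, and (ii) $a_1$ is constant with value $1/n_1$.

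First I would establish the upper bound and the constant $1/n_1$ on the leading coefficient. Using the characterization in Proposition~\ref{p:omegamaxlen}, $\omega(n)$ is a maximum over finitely many nonempty $T \subset G$ and over $\beta \in Ap(T)$ of $\mathsf M_{\<T\>}(n+\beta)$. For a fixed $T$, the semigroup $\<T\>$ is itself a numerical semigroup (after dividing out $\gcd(T)$; but since $\gcd(\Gamma)=1$ the relevant asymptotics are governed by the smallest generator), and $Ap(T)$ is finite for each fixed $T$ precisely when $n_1 \in T$, while if $n_1 \notin T$ the set $Ap(T)$ may be infinite but the corresponding contribution grows with smaller leading coefficient. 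By Corollary~\ref{c:maxlennumerical}, $\mathsf M_{\<T\>}$ has leading coefficient $1/\min(T) \le 1/n_1$ when $n_1 \in T$, and the maximum over finitely many such $\beta$ shifts the function by bounded amounts without changing the leading coefficient; for $T$ with $n_1 \notin T$ the leading coefficient is strictly smaller. Hence the dominant contribution comes from any $T$ containing $n_1$, giving leading coefficient exactly $1/n_1$, and Lemma~\ref{l:maxquasilinear} shows the maximum is quasilinear with that same leading coefficient. That $a_1$ is \emph{constant} (not merely periodic) follows from Theorem~\ref{t:constcoeffs} applied to the module $R/I$ of Theorem~\ref{t:maxlenhilbert}: the relevant ring has a generator of degree coprime to the period since $\gcd(\Gamma)=1$, which forces the top coefficient to be constant; this property is preserved under taking maxima and bounded shifts.

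Next I would pin down the period $n_1$. The natural approach is to show directly that $\omega(n + n_1) = \omega(n) + 1$ for all $n \gg 0$, which immediately gives period dividing $n_1$ and, combined with the leading coefficient $1/n_1$, period exactly $n_1$. To see the recursion: if $\bb$ realizes $\omega(n)$ in the sense of conditions (i)--(ii) from the proof of Proposition~\ref{p:omegamaxlen}, i.e. $b_1 n_1 + \cdots + b_r n_r - n \in \Gamma$ with the Apéry obstruction, then $\bb + \mathbf e_1$ realizes a valid configuration for $n + n_1$ of length $\omega(n)+1$, so $\omega(n+n_1) \ge \omega(n)+1$. Conversely, any $\bb'$ realizing $\omega(n+n_1)$ must have $b'_1 \ge 1$ for $n \gg 0$ (otherwise the leading-coefficient bound is violated, since avoiding $n_1$ forces slower growth), and subtracting $\mathbf e_1$ produces a valid configuration for $n$; the Apéry condition is maintained because the residue mod $n_1$ is unchanged. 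This yields $\omega(n+n_1) \le \omega(n)+1$. The main obstacle I anticipate is the careful bookkeeping in this converse direction: one must argue that for large $n$ the maximizer for $n+n_1$ genuinely uses the generator $n_1$, which requires comparing the growth rate $1/n_1$ against the rates $1/\min(T)$ for subsets $T$ not containing $n_1$ — this is where the half of the argument establishing "$n_1 \notin T$ contributes strictly less" does real work. An alternative, cleaner route is purely algebraic: identify the period $n_1$ as $\lcm$ of the degrees of a homogeneous system of parameters for the module $R/I$ appearing in Theorem~\ref{t:maxlenhilbert}, then check this is inherited through the operations of Lemmas~\ref{l:maxquasilinear}, \ref{l:aperystrat}, and \ref{l:maxconequasilinear}, each of which only introduces cones whose generators lie in $\Gamma$ and hence have degrees divisible appropriately — though verifying that no finer period can occur still ultimately reduces to the recursion above.
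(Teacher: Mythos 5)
Your proposal follows the paper's strategy for the leading coefficient---via Proposition~\ref{p:omegamaxlen}, Corollary~\ref{c:maxlennumerical}, and the dominance of subsets $T$ with $n_1\in T$---but two issues deserve comment.

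First, you assert $Ap(T)$ is finite ``precisely when $n_1\in T$'' and may be infinite otherwise. In a numerical semigroup $Ap(T)$ is finite for \emph{every} nonempty $T$, since $Ap(T)\subset\{\alpha\in\Gamma:\alpha-n_j\notin\Gamma\}$ for any fixed $n_j\in T$, and this set has exactly $n_j$ elements (one per residue class mod $n_j$). The paper relies on this observation, and without it the inner max over $\beta\in Ap(T)$ in Proposition~\ref{p:omegamaxlen} would not obviously be finite.

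Second, for the period your primary route---the recursion $\omega(n+n_1)=\omega(n)+1$ for $n\gg0$---is a genuinely different, more combinatorial argument than the paper's, which simply notes that each dominant $\mathsf M_{\<T\>}$ with $\min T=n_1$ already has period dividing $n_1$ by Corollary~\ref{c:maxlennumerical}, so their finite pointwise max does too. Your recursion is correct in outline, but the \emph{forward} inequality $\omega(n+n_1)\ge\omega(n)+1$ also hides a dependence on the dominance claim: if a maximizing bullet $\bb$ for $n$ has $b_1=0$ and $\beta-n_1\in\Gamma$ (where $\beta=\sum_ib_in_i-n$), then $\bb+\mathbf e_1$ violates condition~(ii) in the bullet characterization and is not a valid configuration for $n+n_1$. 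So both directions, not only the converse you flag, need ``the eventual maximizer uses $n_1$.'' Once that is in hand, either argument gives period dividing $n_1$, and then integrality of $\omega$ combined with the constant leading coefficient $1/n_1$ upgrades this to period exactly $n_1$, since a shorter period $\pi$ would force $\omega(n+\pi)-\omega(n)=\pi/n_1\notin\ZZ$.
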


\begin{proof}
Specializing the proof of Theorem~\ref{t:omegahilbert} to numerical semigroups proves $\omega$ is quasilinear.  Additionally, resuming the notation from Proposition~\ref{p:omegamaxlen}, the set $Ap(T)$ is finite for each $T \subset \{n_1, \ldots, n_r\}$.  Since each function $\mathsf M_{\<T\>}$ is quasilinear with constant linear coefficient $1/\min T$ by Theorem~\ref{t:maxlenhilbert}, those with $\min T = n_1$ will eventually dominate.  Each such function also has period $n_1$ by Theorem~\ref{t:maxlenhilbert}, as desired.  
\end{proof}

\begin{remark}\label{r:omegadirect}
Although this section provides a proof of Theorem~\ref{t:omegahilbert}, the argument requires carefully combining (in general infinitely many) Hilbert functions.  It remains an interesting problem to construct a single graded module (or at least finitely many) whose Hilbert function(s) determine the $\omega$-function for a given semigroup, as this would prove Theorem~\ref{t:omegahilbert} using a more direct application of Theorem~\ref{t:hilbert2}.  
\end{remark}

\begin{prob}\label{pb:omegadirect}
Realize the $\omega$-function on $\Gamma \subset A$ as a Hilbert function directly, without appealing to Theorem~\ref{t:maxlenhilbert}.  
\end{prob}

\section{The catenary degree}\label{s:catenary}

The final factorization invariant considered in this paper is the catenary degree (Definition~\ref{d:catenary}).  As with the delta set invariant in Section~\ref{s:delta}, a family of modules whose Hilbert functions determine the catenary degree is constructed (Theorem~\ref{t:catenaryhilbert}).  Applying Hilbert's theorem classifies the eventual behavior of the catenary degree (Corollary~\ref{c:catenaryhilbert}) and specializes to a known result for numerical semigroups (Corollary~\ref{c:catenarynumerical}).

\begin{defn}\label{d:catenary}
Fix $\alpha \in \Gamma = \<\alpha_1, \ldots, \alpha_r\> \subset A$.  
For $\aa, \bb \in \mathsf Z_\Gamma(\alpha)$, the \emph{greatest common divisor of $\aa$ and $\bb$} is given by 
$$\gcd(\aa,\bb) = (\min(a_1,b_1), \ldots, \min(a_r,b_r)) \in \NN^r,$$
and the \emph{distance between $\aa$ and $\bb$} (or the \emph{weight of $(\aa,\bb)$}) is given by 
$$d(\aa,\bb) = \max(|\aa - \gcd(\aa,\bb)|,|\bb - \gcd(\aa,\bb)|).$$
Given $\aa, \bb \in \mathsf Z_\Gamma(\alpha)$ and $N \ge 1$, an \emph{$N$-chain from $\aa$ to $\bb$} is a sequence $\aa_1, \ldots, \aa_k \in \mathsf Z_\Gamma(\alpha)$ of factorizations of $\alpha$ such that (i) $\aa_1 = \aa$, (ii) $\aa_k = \bb$, and (iii) $d(\aa_{i-1},\aa_i) \le N$ for all $i \le k$.  The \emph{catenary degree of $\alpha$}, denoted $\mathsf c(\alpha)$, is the smallest non-negative integer $N$ such that there exists an $N$-chain between any two factorizations of $\alpha$.  
\end{defn}

In the proof of Theorem~\ref{t:catenaryhilbert}, we use an equivalent characterization of the catenary degree presented in Proposition~\ref{p:catenaryequiv}.  

\begin{defn}\label{d:redundantpair}
Fix $\alpha \in \Gamma = \<\alpha_1, \ldots, \alpha_r\> \subset A$.  
A pair $(\aa,\bb)$ of factorizations of $\alpha$ is \emph{redundant} if there exists a $N$-chain from $\aa$ to $\bb$ for some $N < d(\aa,\bb)$.  
\end{defn}

\begin{prop}\label{p:catenaryequiv}
Suppose $\Gamma \subset A$.  The catenary degree of $\alpha \in \Gamma$ is
$$\mathsf c(\alpha) = \max\{d(\aa,\bb) : (\aa,\bb) \text{ not redundant}\},$$
that is, the maximal weight of a non-redundant pair of factorizations.  
\end{prop}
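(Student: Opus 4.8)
The plan is to establish the two inequalities $w \le \mathsf c(\alpha)$ and $\mathsf c(\alpha) \le w$ separately, where I write $w = \max\{d(\aa,\bb) : (\aa,\bb) \text{ not redundant}\}$. First I would note that the maximum is over a nonempty set: for any $\aa \in \mathsf Z_\Gamma(\alpha)$ the diagonal pair $(\aa,\aa)$ is non-redundant, since $d(\aa,\aa) = 0$ and no integer $N < 0$ exists, so $w$ is well-defined and non-negative (matching the fact that $\mathsf c(\alpha) = 0$ when $\alpha$ has a unique factorization).

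For $w \le \mathsf c(\alpha)$: fix a non-redundant pair $(\aa,\bb)$. By the definition of the catenary degree there is an $\mathsf c(\alpha)$-chain from $\aa$ to $\bb$. If $d(\aa,\bb) > \mathsf c(\alpha)$ held, this chain would exhibit $(\aa,\bb)$ as redundant (take $N = \mathsf c(\alpha) < d(\aa,\bb)$), a contradiction. Hence $d(\aa,\bb) \le \mathsf c(\alpha)$, and taking the maximum over all non-redundant pairs gives $w \le \mathsf c(\alpha)$.

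For $\mathsf c(\alpha) \le w$: it suffices to exhibit a $w$-chain between any two factorizations $\aa, \bb \in \mathsf Z_\Gamma(\alpha)$, which I would do by induction on the non-negative integer $d(\aa,\bb)$. If $d(\aa,\bb) \le w$, the length-two sequence $\aa, \bb$ is already a $w$-chain. If $d(\aa,\bb) > w$, then $(\aa,\bb)$ cannot be non-redundant (those have weight at most $w$), so it is redundant: there is an $N$-chain $\aa = \aa_1, \ldots, \aa_k = \bb$ with $N < d(\aa,\bb)$, and in particular $d(\aa_{i-1},\aa_i) \le N < d(\aa,\bb)$ for each $i$. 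By the inductive hypothesis each consecutive pair $(\aa_{i-1},\aa_i)$ is joined by a $w$-chain, and concatenating these finitely many chains (deleting the repeated endpoint at each junction) produces a $w$-chain from $\aa$ to $\bb$. Combining the two inequalities yields $\mathsf c(\alpha) = w$.

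I do not anticipate a real obstacle: the argument is purely formal. The only care points are the nonemptiness issue above (handled by diagonal pairs) and verifying that the concatenation in the inductive step again satisfies conditions (i)--(iii) of an $N$-chain, which is immediate once one checks that the glued sequence starts at $\aa$, ends at $\bb$, and has all consecutive distances at most $w$. Well-foundedness of the induction is clear since each reduction strictly decreases the integer $d(\aa,\bb)$.
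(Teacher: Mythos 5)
Your proof is correct. The inequality $w \le \mathsf c(\alpha)$ is handled exactly as the paper handles it: a pair with weight exceeding $\mathsf c(\alpha)$ already has a $\mathsf c(\alpha)$-chain witnessing redundancy. For the reverse inequality the organization differs. The paper's (quite terse) proof exhibits a witness: using minimality of $\mathsf c(\alpha)$, it asserts the existence of a non-redundant pair whose weight is exactly $\mathsf c(\alpha)$, whence $w \ge \mathsf c(\alpha)$. (Filling this in requires taking $\aa,\bb$ admitting no $(\mathsf c(\alpha)-1)$-chain, selecting a $\mathsf c(\alpha)$-chain between them, and arguing via concatenation that some consecutive link must itself lack a $(\mathsf c(\alpha)-1)$-chain and have weight exactly $\mathsf c(\alpha)$.) You instead prove $\mathsf c(\alpha) \le w$ directly by showing every pair of factorizations is joined by a $w$-chain, using strong induction on $d(\aa,\bb)$ and the same concatenation idea, but without needing to isolate a witness pair. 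Both arguments rest on the same concatenation mechanism; yours is the more explicitly constructive of the two and also carefully addresses the degenerate nonemptiness issue (diagonal pairs), which the paper leaves implicit.
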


\begin{proof}
Any pair $(\aa,\bb)$ of factorizations of $\alpha$ with $d(\aa,\bb) > \mathsf c(\alpha)$ is redundant.  Moreover, minimality of $\mathsf c(\alpha)$ ensures there exists a non-redundant pair $(\aa,\bb)$ of factorizations of $\alpha$ with $d(\aa,\bb) = \mathsf c(\alpha)$.  
\end{proof}

\begin{thm}\label{t:catenaryhilbert}
Suppose $\Gamma = \<\alpha_1, \ldots, \alpha_r\> \subset A$.  
There is a sequence $M_2, M_3, M_4, \ldots$ of finitely generated, modestly $A$-graded modules such that $\mathcal H(M_j;\alpha) > 0$ if and only if $\alpha$~has a non-redundant pair $(\aa,\bb)$ of factorizations with $d(\aa,\bb) = j$.  In particular, 
$$\mathsf c(\alpha) = \max\{j : \mathcal H(M_j;\alpha) > 0\}.$$
\end{thm}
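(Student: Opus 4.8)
The plan is to build $M_j$, for each $j \geq 2$, so that its graded pieces are spanned by (equivalence classes of) factorization pairs of weight exactly $j$, quotiented by the relation that identifies redundant pairs. Concretely, I would work inside a polynomial ring over $R_\Gamma$ in two disjoint sets of variables, say $\kk[z_1,\dots,z_r,w_1,\dots,w_r]$ with $\deg(z_i) = \deg(w_i) = \alpha_i$, so that a monomial $\zz^\aa\ww^\bb$ of degree $2\alpha$ records the ordered pair $(\aa,\bb) \in \mathsf Z_\Gamma(\alpha)^2$ (after stripping the common gcd, or equivalently after noting that the relevant data is $\aa - \gcd(\aa,\bb)$ and $\bb - \gcd(\aa,\bb)$). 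For fixed $j$, let $V_j$ be the $\kk$-span of those monomials $\zz^\aa\ww^\bb$ with $d(\aa,\bb) = j$, graded by $\alpha = \deg(\aa) = \deg(\bb) \in A$; this is a modestly $A$-graded module over the degree-zero part (or more precisely a graded vector space, which is all we need). Then let $N_j \subseteq V_j$ be the graded subspace spanned by the monomials corresponding to \emph{redundant} pairs of weight $j$, and set $M_j = V_j / N_j$.

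The key steps, in order, are: (1) check that $V_j$ is finitely generated and modestly $A$-graded — finiteness of $\dim_\kk (V_j)_\alpha = \#\{(\aa,\bb) \in \mathsf Z_\Gamma(\alpha)^2 : d(\aa,\bb) = j\}$ follows since $\mathsf Z_\Gamma(\alpha)$ is finite (factorizations of a fixed element form a finite set), and finite generation will follow once $V_j$ is realized as (a subquotient of) a finitely generated module, which I would arrange by taking $V_j$ to be a monomial submodule of $\kk[z_i,w_i]$ cut out by the finitely many "weight $\leq j$" conditions together with "weight $> j-1$"; (2) verify that $N_j$ is a genuine graded subspace — i.e. that redundancy of a weight-$j$ pair depends only on its $A$-degree class and not on anything else, which is immediate since redundancy is defined purely in terms of the $N$-chains available among factorizations of $\alpha = \deg(\aa)$; (3) observe that $(M_j)_\alpha \neq 0$ iff there is \emph{some} weight-$j$ pair of factorizations of $\alpha$ that is non-redundant, because the quotient kills exactly the redundant ones and the non-redundant monomials remain linearly independent (distinct monomials stay distinct in the quotient); (4) deduce the displayed formula $\mathsf c(\alpha) = \max\{j : \mathcal H(M_j;\alpha) > 0\}$ directly from Proposition~\ref{p:catenaryequiv}, since the right-hand side is by construction the maximal weight of a non-redundant pair of factorizations of $\alpha$.

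For the finite-generation bookkeeping in step (1), rather than fighting with $V_j$ directly I would prefer to present $M_j$ as a quotient of a finitely generated graded module over the ring $S = \kk[z_1,\dots,z_r,w_1,\dots,w_r]$ (or over $R_\Gamma$ with an auxiliary variable) by a monomial ideal: the ideal generated by all $\zz^\aa\ww^\bb$ with $d(\aa,\bb) \neq j$ together with all $\zz^\aa\ww^\bb$ with $(\aa,\bb)$ redundant of weight $j$. One must check that the "redundant weight-$j$" monomials, while they do not by themselves form an ideal across different $j$, do generate a submodule of the weight-exactly-$j$ piece — but here I would instead just let $S$ act by the zero map on the relevant degrees, i.e. treat $M_j$ as a module over $\kk$ with an $A$-grading (Theorem~\ref{t:hilbert2} only needs finite generation over the algebra, and the trivial algebra suffices) — and then finite generation of $M_j$ reduces to: the set of $A$-degrees $\alpha$ with $(M_j)_\alpha \neq 0$ is finite, OR $M_j$ is finitely generated over $S$ after all. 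Checking one of these is the only real work; I expect finite generation over $S$ does hold because the weight function $d(\aa,\bb)$, while not monotone under multiplication by $z_i$ or $w_i$ in general, behaves well enough (multiplying both $\aa$ and $\bb$ by $\mathbf e_i$ preserves the weight, as used in the proof of Theorem~\ref{t:maxlenhilbert}), and redundancy is preserved under such simultaneous multiplication since an $N$-chain for $\alpha$ lifts to one for $\alpha + \alpha_i$.

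The main obstacle will be precisely step (1)/(3): making sure $M_j$ is finitely generated as a module over some fixed $\kk$-algebra, since the naive monomial description mixes "$=j$" (not an ideal-theoretic condition) with the redundancy condition (which is not obviously closed under the module action in a way compatible with grading). The cleanest fix is to separate concerns — first build a finitely generated module whose nonzero graded pieces detect "$\exists$ pair of weight $\geq j$", then a second one detecting "$\exists$ pair of weight $\geq j+1$", and recover "$\exists$ non-redundant pair of weight exactly $j$" from the interplay, or alternatively to exploit that simultaneous multiplication $\aa \mapsto \aa + \mathbf e_i$, $\bb \mapsto \bb + \mathbf e_i$ preserves both weight and non-redundancy, so that the monomials corresponding to non-redundant weight-$j$ pairs generate $M_j$ as a module over $\kk[z_1w_1, \dots, z_rw_r]$ (the diagonal subalgebra), which is Noetherian; then $M_j$ is finitely generated over this subalgebra and Theorem~\ref{t:hilbert2} applies. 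I would carry out this last variant, since it parallels the device already used in the proof of Theorem~\ref{t:maxlenhilbert} and keeps the grading manifestly modest.
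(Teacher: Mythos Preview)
Your final approach---working over the diagonal subalgebra $\kk[z_1w_1,\dots,z_rw_r]$ and using that simultaneous translation $(\aa,\bb)\mapsto(\aa+\mathbf e_i,\bb+\mathbf e_i)$ preserves both the distance $d(\aa,\bb)$ and redundancy---is exactly the paper's. The paper sets $\deg(x_i)=\alpha_i$ and $\deg(y_i)=0$ (so that $\deg(\xx^\aa\yy^\bb)=\alpha$, not $2\alpha$), takes $R=\kk[x_1y_1,\dots,x_ry_r]$, defines $M$ as the $R$-module generated by all pair-monomials $\xx^\aa\yy^\bb$ and $M'$ as the $R$-submodule generated by the redundant ones, and lets $M_j\subset M/M'$ be the submodule generated by weight-$j$ monomials.

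Two points need tightening. First, your symmetric grading $\deg(z_i)=\deg(w_i)=\alpha_i$ makes $\deg(\mathbf z^\aa\ww^\bb)=2\alpha$, so the Hilbert function is indexed by $2\alpha$ rather than $\alpha$; when $A$ has $2$-torsion the map $\alpha\mapsto 2\alpha$ is not injective and this genuinely loses information. The paper's asymmetric choice $\deg(y_i)=0$ fixes this cleanly and you should adopt it. Second, you never actually argue finite generation of $M_j$---you only say you ``expect'' it. The paper's one-line argument is that the minimal $R$-generators of $M_j$ are among the monomials with $\gcd(\aa,\bb)=\mathbf 0$ and $d(\aa,\bb)=j$, and for such pairs one has $d(\aa,\bb)=\max(|\aa|,|\bb|)$, so both $|\aa|$ and $|\bb|$ are bounded by $j$; hence there are only finitely many. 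This elementary observation is what makes the whole construction go through, and it does not parallel anything in the proof of Theorem~\ref{t:maxlenhilbert}.
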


\begin{proof}
Let $S = \kk[x_1, \ldots, x_r, y_1, \ldots, y_r]$ with $\deg(x_i) = \alpha_i$ and $\deg(y_i) = 0$ for $i \le r$.  Consider the subring 
$$R = \kk[x_1y_1, \ldots, x_ry_r] \subset S,$$
and the $R$-modules $M' \subset M \subset S$ given by
$$\begin{array}{r@{}c@{}l}
M &{}={}& \<\xx^\aa\yy^\bb : \aa, \bb \in \mathsf Z_\Gamma(\alpha), \alpha \in \Gamma\> \text{ and } \\
M' &{}={}& \<\xx^\aa\yy^\bb \in M: (\aa,\bb) \text{ redundant}\>.
\end{array}$$
Notice that each monomial $\xx^\aa\yy^\bb \in M$ corresponds to a pair of factorizations $(\aa,\bb)$ of the element $\alpha = \deg(\xx^\aa\yy^\bb) \in \Gamma$.  

First, we claim $M$ is minimally generated by $\{\xx^\aa\yy^\bb : \gcd(\aa,\bb) = \mathbf 0\}$.  
Indeed, if $a_i > 0$ and $b_i > 0$ for some $i \le r$, then 
$$\xx^\aa\yy^\bb = (x_iy_i)\xx^{\aa-\mathbf e_i}\yy^{\bb-\mathbf e_i} \in M, $$
so $\xx^\aa\yy^\bb$ can be omitted from any monomial generating set for $M$.  

Next, we claim a monomial $\xx^\aa\yy^\bb \in M$ lies in $M'$ if and only if $(\aa,\bb)$ is redundant.  Indeed, $d(\aa+\cc,\bb+\cc) = d(\aa,\bb)$ for all $\aa, \bb, \cc \in \NN^r$, so if $\aa = \aa_1, \ldots, \aa_k = \bb$ is an $N$-chain for $(\aa,\bb)$, then $\aa_1+\cc, \ldots, \aa_k+\cc$ is an $N$-chain for $(\aa + \cc, \bb + \cc)$.  As such, if~$(\aa,\bb)$ is redundant and $\cc \in \NN^r$, then $(\aa + \cc, \bb + \cc)$ is also redundant.  

Now, for each $j \ge 2$, let $M_j$ denote the $R$-submodule of $M/M'$ given by
$$M_j = \<\xx^\aa\yy^\bb \in M : d(\aa,\bb) = j\> \subset M/M'.$$
By the above argument, every monomial $\xx^\aa\yy^\bb \in M_j$ satisfies $d(\aa,\bb) = j$.  As such, we conclude $M_j$ has a monomial of degree $\alpha$ precisely when $\alpha$ has a non-redundant pair of factorizations with weight $j$.  This implies $\mathsf c(\alpha)$ has the desired form by Proposition~\ref{p:catenaryequiv}.  

It remains to show that each $M_j$ is finitely generated.  If $\aa, \bb \in \mathsf Z_\Gamma(\alpha)$ satisfy $\gcd(\aa,\bb) = \mathbf 0$, then $d(\aa,\bb) = \max(|\aa|, |\bb|)$, so only finitely many such pairs can also satisfy $d(\aa,\bb) = j$.  Since $M_j$ is generated by those monomials $\xx^\aa\yy^\bb$ in the minimal generating set of $M$ satisfying $d(\aa,\bb) = j$, this completes the proof.  
\end{proof}

Applying Theorems~\ref{t:affineequiv}(c) and~\ref{t:hilbert2} to Theorem~\ref{t:catenaryhilbert} yields Corollary~\ref{c:catenaryhilbert}.  

\begin{cor}\label{c:catenaryhilbert}
Suppose $\Gamma = \<\alpha_1, \ldots, \alpha_r\> \subset A$.  
For each $j \ge 2$, the set 
$$\{\alpha \in \Gamma : \mathsf c(\alpha) = j\}$$
is a finite union of disjoint cones.  In particular, the catenary degree function $\mathsf c:\Gamma \to \NN$ is eventually quasiconstant.  
\end{cor}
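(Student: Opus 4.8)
The plan is to read $\mathsf c$ off the modules $M_2, M_3, \ldots$ produced by Theorem~\ref{t:catenaryhilbert}, via the identity $\mathsf c(\alpha) = \max\{j : \mathcal H(M_j;\alpha) > 0\}$. I would prove two things: that only finitely many $M_j$ are nonzero, and that for each $j$ the support $S_j := \{\alpha \in \Gamma : \mathcal H(M_j;\alpha) > 0\}$ is a finite union of disjoint cones. Granting these, $\{\alpha \in \Gamma : \mathsf c(\alpha) = j\} = S_j \smallsetminus \bigcup_{j' > j} S_{j'}$ is a Boolean combination of finitely many finite unions of cones, hence again a finite union of disjoint cones: the family of such sets is closed under the intersections and complements this requires, by the same polyhedral bookkeeping (Gordan's Lemma, triangulation of rational cones) already used in Lemmas~\ref{l:maxquasilinear} and~\ref{l:aperystrat}. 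The ``in particular'' then follows at once, since $\mathsf c = \sum_{j \ge 2} j\cdot\mathbf 1_{\{\alpha : \mathsf c(\alpha) = j\}}$ exhibits $\mathsf c$ as a finite $\ZZ$-linear combination of indicator functions of cones, each a simple quasipolynomial of degree $0$.

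For finiteness of the nonzero $M_j$: if $\mathcal H(M_j;\alpha) > 0$ then $\alpha$ has a non-redundant pair of factorizations of weight $j$, so $\mathsf c(\alpha) \ge j$; it therefore suffices to know that $\mathsf c(\Gamma) := \sup_{\alpha \in \Gamma} \mathsf c(\alpha)$ is finite, as then $M_j = 0$ for all $j > \mathsf c(\Gamma)$. Finiteness of $\mathsf c(\Gamma)$ for finitely generated cancellative commutative semigroups is classical (see, e.g., \cite{nonuniq}); indeed $\mathsf c(\Gamma)$ is attained among the finitely many Betti elements of $\Gamma$. I expect this to be the one genuinely non-automatic point: in contrast to the ascending chain $I_0 \subset I_1 \subset \cdots$ of Theorem~\ref{t:deltahilbert}, which terminates merely because $R_\Gamma$ is Noetherian, the modules $M' \subset M$ underlying Theorem~\ref{t:catenaryhilbert} need not be finitely generated over $R$, so this uniform bound does not fall out of the construction itself and must be imported.

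For the support: Theorem~\ref{t:hilbert2} shows $\mathcal H(M_j;-)$ is eventually quasipolynomial, so by Theorem~\ref{t:affineequiv}(c) it is a sum of simple quasipolynomials $f_1, \ldots, f_k$ with pairwise disjoint supports, each $f_i$ supported on a cone $C_i$ and agreeing there with a polynomial. Because $M_j$ is a monomial module — a monomial submodule of $S$, modulo the monomial submodule $M'$ — the value $\mathcal H(M_j;\alpha)$ is an honest count (of non-redundant pairs of factorizations of $\alpha$ of weight $j$), and for such lattice-point-counting functions each $\supp f_i$, and hence $S_j = \bigsqcup_i \supp f_i$, is a finite union of disjoint cones; this is exactly the step behind Corollary~\ref{c:deltahilbert}, now run with the $M_j$ in place of the quotients $I_j/I_{j-1}$. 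The remaining Boolean bookkeeping of the first paragraph is routine, of the same flavor as Lemma~\ref{l:aperystrat}.
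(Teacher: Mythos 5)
Your argument is the same as the paper's one-line proof (apply Theorems~\ref{t:affineequiv}(c) and~\ref{t:hilbert2} to Theorem~\ref{t:catenaryhilbert}), but you unpack it far more carefully, and in doing so you correctly surface a point the paper leaves implicit: nothing in Theorem~\ref{t:catenaryhilbert} or the Hilbert-theoretic machinery by itself forces only finitely many of the $M_j$ to be nonzero, and without that the Boolean subtraction $S_j \smallsetminus \bigcup_{j'>j} S_{j'}$ is not obviously a finite union of cones, nor is $\mathsf c$ obviously bounded. Your diagnosis of \emph{why} the delta-set trick does not transfer is exactly right: the chain $I_0 \subset I_1 \subset \cdots$ of Theorem~\ref{t:deltahilbert} stabilizes because it lives in the Noetherian ring $R_\Gamma$, whereas the modules $M' \subset M$ in the catenary construction need not be finitely generated over $R$ (only each individual quotient $M_j$ is), so no ascending-chain argument applies, and one must import the classical finiteness of $\mathsf c(\Gamma)$ (attained among the finitely many Betti elements). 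Your remaining step — that the support of each $\mathcal H(M_j;-)$ is itself a finite union of cones, not merely a subset of the union of cones produced by Theorem~\ref{t:affineequiv}(c) — is also a legitimate concern, though your appeal to $M_j$ being a monomial module is at about the same level of informality as the paper's own citation of Theorem~\ref{t:affineequiv}(c) and its parallel in Corollary~\ref{c:deltahilbert}. In short: same route as the paper, executed more honestly, with the one genuinely non-automatic input correctly identified and supplied.
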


Specializing Corollary~\ref{c:catenaryhilbert} to numerical semigroups yields Corollary~\ref{c:catenarynumerical}, which appeared as \cite[Theorem~3.1]{catenaryperiodic}.  

\begin{cor}\label{c:catenarynumerical}
Fix a numerical semigroup $\Gamma = \<n_1, \ldots, n_r\> \subset \NN$.  The catenary degree function $\mathsf c:\Gamma \to \ZZ_{\ge 0}$ is eventually periodic, and its period divides $\lcm(n_1, \ldots, n_r)$.  
\end{cor}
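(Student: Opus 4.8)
The plan is to specialize Corollary~\ref{c:catenaryhilbert} (and the machinery of Theorem~\ref{t:catenaryhilbert}) to the numerical setting and then use the refined period control offered by Theorem~\ref{t:hilbert}. Concretely, I would proceed as follows. First, recall from Theorem~\ref{t:catenaryhilbert} that there is a sequence of finitely generated, $A$-graded modules $M_2, M_3, \ldots$ over the ring $R = \kk[x_1y_1, \ldots, x_ry_r]$ such that $\mathsf c(\alpha) = \max\{j : \mathcal H(M_j;\alpha) > 0\}$. Since $\Gamma \subset \NN$ is a numerical semigroup, $A = \NN$, so each $M_j$ is an $\NN$-graded module over the $\NN$-graded ring $R$, where $\deg(x_iy_i) = n_i$. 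Moreover, from the proof of Theorem~\ref{t:catenaryhilbert} only finitely many of the $M_j$ are nonzero: if $\gcd(\aa,\bb)=\mathbf 0$ then $d(\aa,\bb) = \max(|\aa|,|\bb|)$, and the pairs appearing in the minimal generating set of $M$ are finite in number up to the action of $R$ — actually $\Delta(\Gamma)$ finite and standard bounds on catenary degree give $\mathsf c(\Gamma) < \infty$, so $M_j = 0$ for $j \gg 0$. Thus it suffices to control finitely many Hilbert functions $\mathcal H(M_j;-)$.

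Next, for each of the finitely many relevant $j$, apply Theorem~\ref{t:hilbert}: $\mathcal H(M_j;n)$ coincides for $n \gg 0$ with a quasipolynomial whose coefficients are periodic. The key point for the period bound is the ``system of parameters'' clause in Theorem~\ref{t:hilbert}: I would exhibit a homogeneous system of parameters for $M_j$ drawn from the generators $x_1y_1, \ldots, x_ry_r$ of $R$ (whose degrees are $n_1, \ldots, n_r$), so that the period of each coefficient of the Hilbert quasipolynomial of $M_j$ divides $\lcm(n_1, \ldots, n_r)$. Since $R$ is a polynomial ring in $r$ variables over $\kk$ (the $x_iy_i$ are algebraically independent) and $\dim M_j \le r$, a suitable subset (or all) of $\{x_1y_1,\ldots,x_ry_r\}$ can be taken as the h.s.o.p.; in the worst case all $r$ of them are needed, giving exactly $\lcm(n_1,\ldots,n_r)$. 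Consequently, for $n \gg 0$ the support $\{n : \mathcal H(M_j;n) > 0\}$ is periodic with period dividing $\lcm(n_1,\ldots,n_r)$ — a quasipolynomial that is eventually positive (or eventually zero) has eventually periodic support with period dividing the period of its coefficients.

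Finally, since $\mathsf c(n) = \max\{j : \mathcal H(M_j;n) > 0\}$ is a maximum over finitely many functions, each of which has eventually periodic support with common period dividing $\lcm(n_1,\ldots,n_r)$, the function $\mathsf c:\Gamma \to \ZZ_{\ge 0}$ is itself eventually periodic with period dividing $\lcm(n_1,\ldots,n_r)$. I expect the main obstacle to be the bookkeeping in the second step: verifying carefully that $\{x_1y_1,\ldots,x_ry_r\}$ (or a suitable subset matching $\dim M_j$) really is a homogeneous system of parameters for $M_j = \<\xx^\aa\yy^\bb : d(\aa,\bb)=j\> \subset M/M'$ — i.e.\ that $M_j/\langle x_1y_1,\ldots,x_ry_r\rangle M_j$ has finite length and that the degrees are correctly $n_1,\ldots,n_r$ — since $M_j$ is a subquotient of $M$ rather than a free module, and one must check that multiplication by these elements does not behave degenerately on it. The period bound $\lcm(n_1,\ldots,n_r)$ then falls out of Theorem~\ref{t:hilbert}'s final clause.
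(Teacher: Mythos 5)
Your proposal is correct and follows essentially the same route as the paper: invoke Theorem~\ref{t:catenaryhilbert} to realize $\mathsf c(n)=\max\{j:\mathcal H(M_j;n)>0\}$ with finitely many nonzero $M_j$, and then use the system-of-parameters clause of Theorem~\ref{t:hilbert} with $(x_1y_1,\ldots,x_ry_r)$ (of degrees $n_1,\ldots,n_r$) to bound the period by $\lcm(n_1,\ldots,n_r)$. The paper's proof is terser — it simply asserts that $(x_1y_1,\ldots,x_ry_r)$ is a homogeneous system of parameters for each $M_j$ — but your worry about checking this is reasonable, since $M_j$ is a subquotient of $M$ and $\dim M_j$ may be strictly less than $r$; the relevant fact is that $M_j/\langle x_1y_1,\ldots,x_ry_r\rangle M_j$ has finite $\kk$-dimension, which is what makes the lcm bound apply, and if fewer parameters suffice the resulting lcm only divides $\lcm(n_1,\ldots,n_r)$, so the bound still holds.
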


\begin{proof}
Eventual periodicity follows from Theorem~\ref{t:catenaryhilbert}.  Resuming the notation from Theorem~\ref{t:catenaryhilbert}, the sequence $(x_1y_1, \ldots, x_ry_r)$ forms a homogeneous system of parameters for each $M_j$, so $\mathsf c$ has period dividing $\lcm(n_1, \ldots, n_r)$ by Theorem~\ref{t:hilbert}.  
\end{proof}

\begin{remark}
The catenary degree is just one of many factorization invariants defined using chains of factorizations.  Many of these other invariants are also known to be eventually periodic for numerical semigroups, and an answer to Problem~\ref{pb:catenaryvariants} would extend these results in the same manner as Theorem~\ref{t:catenaryhilbert}.  See~\cite{halffactorial} for precise definitions.  
\end{remark}

\begin{prob}\label{pb:catenaryvariants}
Generalize Theorem~\ref{t:catenaryhilbert} to describe the monotone catenary degree, homogeneous catenary degree, equal catenary degree, and tame degree.
\end{prob}

\end{document}